\newcommand{\ds}{\displaystyle}
\newcommand{\diag}{\operatorname{diag}}
\newcommand{\spann}{\operatorname{span}}
\newcommand{\Emb}{\operatorname{E}_\gamma}
\newcommand{\Nn}{{\mathbb N}}
\newcommand{\Rr}{{\mathbb R}}
\newcommand{\Zz}{{\mathbb Z}}
\newcommand{\eps}{\varepsilon}
\newcommand{\Ttt}{\mathbf{T}}
\newcommand{\Ccc}{\mathbf{C}}
\newcommand{\Ddd}{\mathbf{D}}
\newcommand{\Mmm}{\mathbf{M}}
\newcommand{\Pin}{\Pi_n^2}
\newcommand{\Pibasic}{\Pi_{n,p}^2}
\newcommand{\PiL}{\Pi_{n,p}^{2,L}}
\newcommand{\Lagpol}{\mathcal{L}_{n,p} f}
\newcommand{\Lub}{\mathrm{LD}_{n,p}}
\newcommand{\Lubblack}{\mathrm{LD}_{n,p}^{\mathrm{b}}}
\newcommand{\Lubwhite}{\mathrm{LD}_{n,p}^{\mathrm{w}}}
\newcommand{\Lubint}{\mathrm{LD}_{n,p}^{\mathrm{int}}}
\newcommand{\Lubout}{\mathrm{LD}_{n,p}^{\mathrm{out}}}
\newcommand{\Lubvert}{\mathrm{LD}_{n,p}^{\mathrm{vert}}}
\newcommand{\Lubedge}{\mathrm{LD}_{n,p}^{\mathrm{edge}}}
\newcommand{\Lubind}{\Gamma_{n,p}^{L}} 
\newcommand{\Lubindbasic}{\Gamma_{n,p}} 
\newcommand{\LubAbr}{\mathrm{LD}}
\newcommand{\Liscurve}{\gamma_{n,p}} 
\newcommand{\Aa}{\mathcal{A}}
\newcommand{\Bb}{\mathcal{B}}
\newcommand{\equi}{\overset{\Lub}{\sim}}
\newcommand{\Tripi}{\Pi_{n(n+p)}^{\mathrm{e},L}}
\newcommand{\Tripibasica}{\Pi_{n(n+p)}^{\mathrm{e}}}
\newcommand{\Tripibasicb}{\Pi_{n(n+p)-1}^{\mathrm{e}}}
\newcommand{\Tripibasicc}{\Pi_{2n(n+p)-1}^{\mathrm{e}}}
\newcommand{\Lagbastrig}{ e_{i,j} }
\newcommand{\LagbastrigE}{\Emb \! \left( \! \hat{T}_{i}(x) \hat{T}_{j}(y)\! \right)\! }
\newcommand{\1}{\mathrm{1\hspace{-1mm}l}}
\newcommand{\Dx}[1]{\mathrm{d}#1}
\newcommand{\minus}{%
  \setbox0=\hbox{-}%
  \vcenter{%
    \hrule width\wd0 height \the\fontdimen8\textfont3%
  }%
}
\newtheorem{theorem}{Theorem}
\newtheorem{lemma}[theorem]{Lemma}
\newtheorem{proposition}[theorem]{Proposition}
\newtheorem{corollary}[theorem]{Corollary}
\newdefinition{remark}{Remark}
\newproof{proof}{Proof}
\begin{document}

\begin{frontmatter}
\title{Bivariate Lagrange interpolation at the node points of Lissajous curves - the degenerate case}

\author{Wolfgang Erb}

\address{ Institut f\"ur Mathematik \\ 
          Universit\"at zu L\"ubeck \\
	      Ratzeburger Allee 160\\
	      23562 L\"ubeck }
	      
\ead{erb@math.uni-luebeck.de}

\date{\today}

\begin{abstract}
In this article, we study bivariate polynomial interpolation on the node points of degenerate Lissajous figures. These node points form Chebyshev lattices of rank $1$ and are generalizations of the well-known Padua points. We show that these node points allow unique interpolation in appropriately defined spaces of polynomials and give explicit formulas for the Lagrange basis polynomials. Further, we prove mean and uniform convergence of the interpolating schemes. For the uniform convergence the growth of the Lebesgue constant has to be taken into consideration. It turns out that this growth is of logarithmic nature.  
\end{abstract}

\begin{keyword} 
Bivariate Lagrange interpolation \sep Chebyshev lattices \sep Lissajous curves \sep Padua points \sep Quadrature formulas
\end{keyword}

\end{frontmatter}

\section{Introduction}

A by now well-established point set for Lagrange interpolation on $[-1,1]^2$ is given by the Padua points~\cite{CaliariDeMarchiVianello2005}. This 
set of points allows unique interpolation in the space $\Pin$ of bivariate polynomials of degree $n$ and has a few outstanding properties: it can be 
characterized as a set of node points of a particular Lissajous curve~\cite{BosDeMarchiVianelloXu2006}, as an affine variety of a polynomial 
ideal~\cite{BosDeMarchiVianelloXu2007} and as a particular Chebyshev lattice of rank $1$~\cite{CoolsPoppe2011}. 
Moreover, the Lagrange interpolant can be computed in a fast and efficient way and the asymptotic of the corresponding Lebesgue constant is of 
log-squared type~\cite{CaliariDeMarchiSommarivaVianello2011,CaliariDeMarchiVianello2008}.

Using affine mappings of the square $[-1,1]^2$, the Padua points can also be used for interpolation on general rectangular domains \cite{CaliarideMarchiVianello2008-2}. 
However, for highly anisotropic rectangles the particular structure of the Padua points is not so well adapted. 
In this case it is more favorable to use interpolation nodes that reflect different resolutions along the axes of the rectangle.
To obtain more flexibility for anisotropic domains, it is therefore reasonable to consider generalizations of the Padua points.

In ~\cite{CoolsPoppe2011,PoppeCools2013}, a framework for approximation and interpolation on general Chebyshev lattices was developed. This framework contains multidimensional anisotropic lattices and the Padua points are included as a special case. But, in this framework the interpolation of functions is in general only available in an approximative way. This approximate interpolation is known as hyperinterpolation~\cite{Sloan1995} and is frequently used in applications. For trivariate Lissajous curves, it is studied in the recent work \cite{BosDeMarchiVianello2015}. 

A different way of generalizing the theory of the Padua points was recently presented in \cite{ErbKaethnerAhlborgBuzug2014}. In this article, the node points of non-degenerate Lissajous curves were used as interpolation points. These nodes turned out to be special Chebyshev lattices of rank $1$. In particular, in this theory also anisotropic point sets can be chosen. However, since the generating curve of the Padua points is a degenerate Lissajous curve, the Padua points are not directly included in the theory developed in \cite{ErbKaethnerAhlborgBuzug2014}.

The goal of this article is to develop an interpolation theory for node points of degenerate Lissajous curves that contains the Padua points as a special case. To this end, we rebuild the interpolation theory developed in \cite{ErbKaethnerAhlborgBuzug2014} for degenerate Lissajous curves. While most of the results carry over, some technical aspects in the proofs differ considerably. This is mainly due to the different geometric properties of the underlying curves and its interpolation nodes. 

The search for favorable node points in multivariate polynomial interpolation has a long-standing history. We refer to the survey articles \cite{GascaSauer2000b,GascaSauer2000} for a general overview. Beside the Padua points, the points introduced by Morrow and Patterson \cite{MorrowPatterson1978} and Xu \cite{Xu1996} (see also \cite{Harris2010}) are of special importance for the theory presented in this article.  

We start our research by studying the node points $\Lub$ of degenerate Lissajous curves. It turns out that these node points can be characterized in several ways and, in particular, as Chebyshev lattices of rank $1$. As in the non-degenerate case, the node points $\Lub$ can be used as quadrature rules on $[-1,1]^2$ for integrals with a product Chebyshev weight. Compared to non-degenerate Lissajous curves, the node sets $\Lub$ in the degenerate setting are smaller, contain more asymmetries and include two vertices of the square $[-1,1]^2$. 

The main results of this article can be found in Section $4$ and $5$. In Theorem \ref{thm:interpolation problem}, we prove that $\Lub$ allows unique polynomial interpolation in a properly defined space $\PiL$ of bivariate polynomials. We will derive an explicit formula 
for the corresponding fundamental Lagrange polynomials. This explicit formula is very similar to the one known for the Padua points and allows to compute the interpolating polynomial in a simple and efficient way.

Whereas in \cite{ErbKaethnerAhlborgBuzug2014} stability and convergence was studied only 
numerically, in this article we investigate the convergence of the interpolation scheme also in an analytic way. 
For continuous functions $f$ we show mean convergence of the Lagrange interpolant in the $L^r$-norm.
For the Xu and the Padua points it is known that the Lebesgue constants grow as $\mathcal{O}(\ln^2 n)$ 
(cf. \cite{BosDeMarchiVianello2006,BosDeMarchiVianelloXu2007,VecchiaMastroianniVertesi2009}). 
We will confirm a similar log-squared behavior also for the Lebesgue constant of the general interpolation scheme considered in this article. We conclude
this article with some numerical experiments that confirm the convergence results and illuminate the role of the parameter $p$ in anisotropic setups.

\section{The node points of degenerate Lissajous curves} \label{sec:Lissajous}

In this work, we consider Lissajous figures of the type
\begin{equation} \label{eq:generatingcurve}
 \Liscurve: \Rr \to [-1,1]^2, \quad \Liscurve(t) = \Big( \cos(nt), \, \cos((n+p)t) \Big),
\end{equation}
with positive integers $n$ and $p$ such that $n$ and $n+p$ are relatively prime. The curve $\Liscurve$ is $2\pi$-periodic but doubly traversed as $t$ varies from $0$ to $2\pi$. For this reason, $\Liscurve$ is referred to as degenerate Lissajous curve and we can restrict the parametrization of the curve to the interval $[0,\pi]$. The points $\Liscurve(0) = (1,1)$ and $\Liscurve(\pi) = ((-1)^n, (-1)^{n+p})$ denote the starting and the end point of the curve $\Liscurve$, respectively. A classical reference for the characterization of two-dimensional Lissajous curves and its singularities is the dissertation \cite{Braun1875} of Braun. In recent years, Lissajous curves are particularly studied in terms of knot theory \cite{BogleHearstJonesStoilov1994,KoseleffPecker2011}.

If we sample the curve $\Liscurve$ along the $n(n+p)+1$ equidistant points 
\[ t_k := \frac{\pi k}{ n (n+p)}, \quad k = 0, \ldots, n(n+p),\]
in the interval $[0,\pi]$, we get the following set of node points:
\begin{equation}
\Lub := \Big\{ \Liscurve (t_k): \quad k = 0, \ldots, n(n+p) \Big\}. \label{def:Luebeckpoints}
\end{equation}

To derive particular properties of the Lissajous curve $\Liscurve$ and the set $\Lub$ it is easier 
to characterize the curve $\Liscurve$ with help of the Chebyshev polynomials $T_n(x) = \cos (n \arccos x)$ of the
first kind. Based on this relation, the following properties of $\Liscurve$ are proven in \cite{Fischer}[Section 3.9] and \cite{KoseleffPecker2011}. 
We use the notation 
\begin{equation*}
	z_k^{n} := \cos\left(\frac{k\pi}{n}\right), \quad n \in \Nn, \; k = 0, \ldots, n,
\end{equation*}
to abbreviate the Chebyshev-Gau{\ss}-Lobatto points.

\begin{proposition} \label{prop-1}
If $n$ and $n+p$ are relatively prime, the 
Lissajous curve $\Liscurve$, $t \in [0,\pi]$, corresponds to the (plane) algebraic curve 
\begin{equation} \mathcal{C}_{n,p} := \{(x,y) \in [-1,1]^2:\;T_{n+p}(x) - T_n(y) = 0\}. \label{eq:chebycurve}
\end{equation} 
The curve $\Liscurve(t)$, $t \in [0,\pi]$, has $\frac{(n+p-1)(n-1)}{2}$ ordinary self-intersection points in the interior of the square $[-1,1]^2$. They are given as
\begin{equation}
\Lubint : = \left\{  \Big( z_{i}^{n+p}, z_{j}^{n}\Big): \quad \begin{array}{l} i = 1, \ldots, n+p-1 \\ j = 1, \ldots, n-1 \\ i+j = 0 \mod 2  \end{array}
\right\} \label{eq:LDinterior} 
\end{equation}
and can be arranged in two rectangular grids. 
\end{proposition}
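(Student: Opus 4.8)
The plan is to reduce the statement about the Lissajous curve $\Liscurve$ to an analysis of the algebraic curve $\mathcal{C}_{n,p}$ and then to a combinatorial count on an equidistant grid in the parameter $t$. First I would establish the correspondence between $\Liscurve([0,\pi])$ and $\mathcal{C}_{n,p}$: a point $(x,y) = (\cos\alpha,\cos\beta)$ lies on $\mathcal{C}_{n,p}$ iff $\cos((n+p)\arccos x) = \cos(n\arccos y)$, and one checks that the solution set of this transcendental equation, with $x,y\in[-1,1]$, is exactly the trace of $\Liscurve$ for $t\in[0,\pi]$; this is the content I would quote from \cite{Fischer}[Section 3.9] and \cite{KoseleffPecker2011}. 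Since $n$ and $n+p$ are coprime, $\Liscurve$ restricted to $[0,\pi]$ is injective except at a finite set of parameters where the curve crosses itself, so the self-intersection points correspond precisely to pairs $t\neq t'$ in $[0,\pi]$ (or one of them a boundary value) with $\Liscurve(t)=\Liscurve(t')$.

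Next I would solve the equation $\Liscurve(t)=\Liscurve(t')$ explicitly. The condition $\cos(nt)=\cos(nt')$ forces $nt' \equiv \pm nt \pmod{2\pi}$, and similarly $(n+p)t'\equiv\pm(n+p)t\pmod{2\pi}$. Writing $t=\pi s$, $t'=\pi s'$ with $s,s'\in[0,1]$, this becomes a linear-congruence system; the coprimality of $n$ and $n+p$ (equivalently of $n$ and $p$) is what guarantees that the only solutions with $s\neq s'$ have $s=\frac{a}{n+p}$ and $s'$ or $s$ also of the form $\frac{b}{n}$ — more precisely one shows that an interior self-intersection occurs exactly at a point $(z_i^{n+p},z_j^{n})$ where $\cos((n+p)t)=\cos(i\pi\cdot\frac{n+p}{n+p})$ lines up with $\cos(nt)=\cos(j\pi)$. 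The parity constraint $i+j\equiv 0\pmod 2$ drops out here: it is exactly the condition for the two preimages in $[0,\pi]$ to be distinct and to map to an \emph{interior} point rather than to a boundary point or to a single parameter value. The points with $i+j$ odd are the ones that would require a preimage outside $[0,\pi]$, or land on the boundary, and must be excluded.

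For the count, the grid $\{(z_i^{n+p},z_j^n) : 1\le i\le n+p-1,\ 1\le j\le n-1\}$ has $(n+p-1)(n-1)$ nodes, and imposing $i+j\equiv 0\pmod 2$ selects exactly half of them; since $n$ and $n+p$ are coprime they are not both even, so the two parities are balanced and the count is $\frac{(n+p-1)(n-1)}{2}$, which is an integer. The "two rectangular grids" statement is then immediate: the selected nodes split according to the parity of $i$ (equivalently of $j$) into the sublattice with $i,j$ both odd and the sublattice with $i,j$ both even, each of which is a full rectangular grid of Chebyshev-type points. That the self-intersections are \emph{ordinary} (simple double points, transversal crossings) follows from computing the two branch directions at such a point: the tangent vectors to the two local arcs of $\Liscurve$ are $\big(-n\sin(nt),-(n+p)\sin((n+p)t)\big)$ evaluated at the two distinct preimages, and one checks these are linearly independent because $\sin(nt)$ and $\sin((n+p)t)$ cannot both vanish simultaneously at an interior point (that would force $t$ to be a boundary value) and the sign pattern differs between the two preimages.

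The main obstacle I anticipate is the bookkeeping in the congruence analysis: correctly enumerating which pairs $(t,t')\in[0,\pi]^2$ solve $\Liscurve(t)=\Liscurve(t')$, keeping track of the $\pm$ sign choices in each of the two cosine conditions, and pinning down exactly why coprimality together with the parity condition $i+j\equiv 0\pmod 2$ carves out precisely the interior crossings (as opposed to boundary intersections or spurious coincidences). Getting the endpoint cases $t\in\{0,\pi\}$ and the points of $\mathcal{C}_{n,p}$ on the edges $\{x=\pm1\}\cup\{y=\pm1\}$ out of the way cleanly is the delicate part; once that is done, the cardinality and the decomposition into two grids are routine.
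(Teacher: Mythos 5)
The paper itself gives no proof of Proposition \ref{prop-1}: it is quoted from \cite{Fischer}[Section 3.9] and \cite{KoseleffPecker2011}. Your plan --- identify the trace of $\Liscurve$ with $\mathcal{C}_{n,p}$, analyse $\Liscurve(t)=\Liscurve(t')$ via $\pm$-congruences, count grid points by parity, and check transversality of the tangents --- is therefore a different (self-contained) route, and its skeleton is viable; the half-count of the $(n+p-1)(n-1)$ grid points and the splitting into the two grids ($i,j$ both even / both odd) are indeed routine. But two steps are misstated or missing as written. First, your explanation of the parity condition is wrong: a grid point $(z_i^{n+p},z_j^n)$ with $1\le i\le n+p-1$, $1\le j\le n-1$ and $i+j$ odd is not excluded because "a preimage would lie outside $[0,\pi]$ or on the boundary"; it simply does not lie on the curve at all, since $T_{n+p}(z_i^{n+p})-T_n(z_j^n)=(-1)^i-(-1)^j\neq 0$, and the image of $\Liscurve$ over all of $\Rr$ already coincides with the image over $[0,\pi]$. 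Relatedly, the displayed identification in your second paragraph is garbled and has the coordinates swapped: the first coordinate $\cos(nt)$ must equal $z_i^{n+p}=\cos\bigl(i\pi/(n+p)\bigr)$ and the second coordinate $\cos((n+p)t)$ must equal $z_j^n=\cos\bigl(j\pi/n\bigr)$; the quantities $\cos(i\pi)$, $\cos(j\pi)$ are only the values $T_{n+p}(z_i^{n+p})$, $T_n(z_j^n)$ entering the parity criterion just described.

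Second, the converse inclusion is asserted but never argued: you show that any self-intersection must be a grid point of the stated form, but not that \emph{every} interior grid point with $i+j$ even is attained by exactly two parameters in $(0,\pi)$. This is needed both for the count $\frac{(n+p-1)(n-1)}{2}$ (otherwise you only have an upper bound) and for the points being \emph{ordinary} double points (exactly two branches, not three). It amounts to counting the solutions $m\in\{0,\dots,n(n+p)\}$ of $m\equiv\pm i \pmod{2(n+p)}$, $m\equiv\pm j\pmod{2n}$, which is solvable precisely when $i\equiv j\pmod 2$ and has exactly two solutions when $0<i<n+p$, $0<j<n$ --- exactly the bookkeeping you defer (the paper's Proposition \ref{prop-2} avoids it only by a cardinality comparison that presupposes Proposition \ref{prop-1}). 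Finally, in the transversality check the dangerous case is the one with both signs negative, $nt'\equiv -nt$ and $(n+p)t'\equiv -(n+p)t \pmod{2\pi}$, for which the two tangent vectors are anti-parallel and your determinant vanishes identically; you must rule it out by noting that coprimality then forces $t+t'\in 2\pi\Zz$, impossible for $t,t'\in(0,\pi)$. That is the justification hiding behind "the sign pattern differs", and it is where coprimality and the restriction to $(0,\pi)$ actually enter; once the signs are mixed, the determinant is $\pm 2n(n+p)\sin(nt)\sin((n+p)t)\neq 0$ at interior points, as you say.
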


\begin{figure}[htb]
	\centering
	\subfigure[	Lissajous curve $\gamma_{3,2}$, $|\LubAbr_{3,2}| = 12$.]{\includegraphics[scale=0.85]{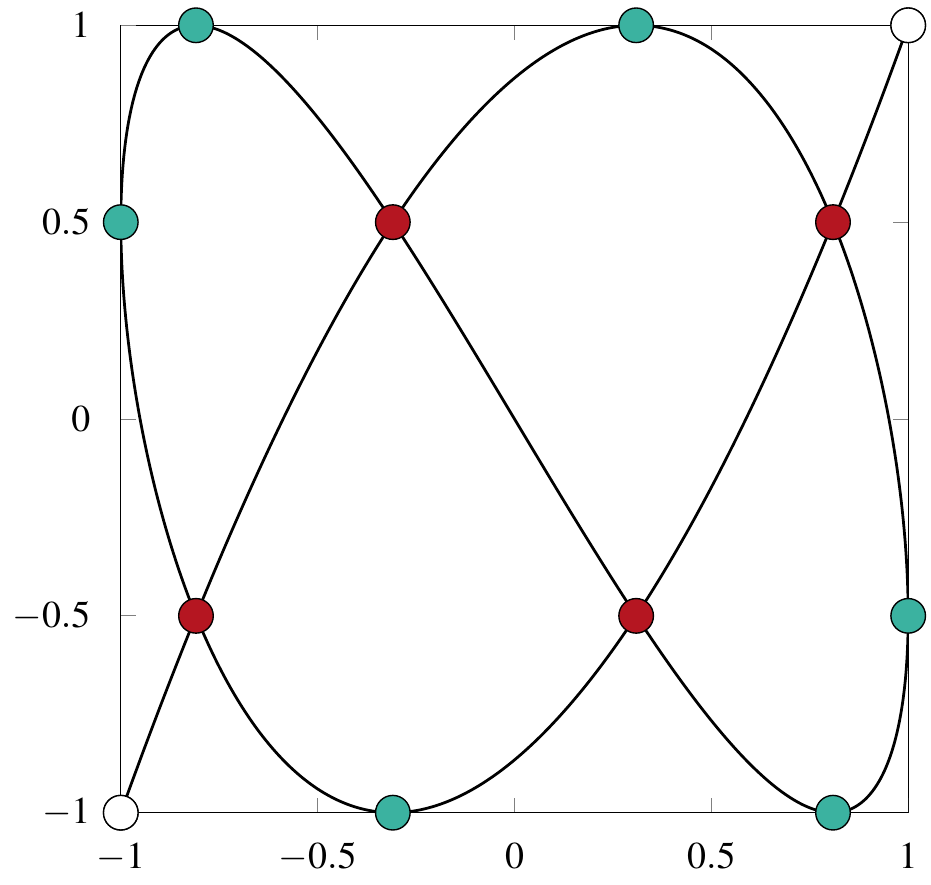}}
	\hfill	
	\subfigure[ Index set $\Gamma_{3,2}^L$, $|\Gamma_{3,2}^L| = 12|$]{\includegraphics[scale=1.05]{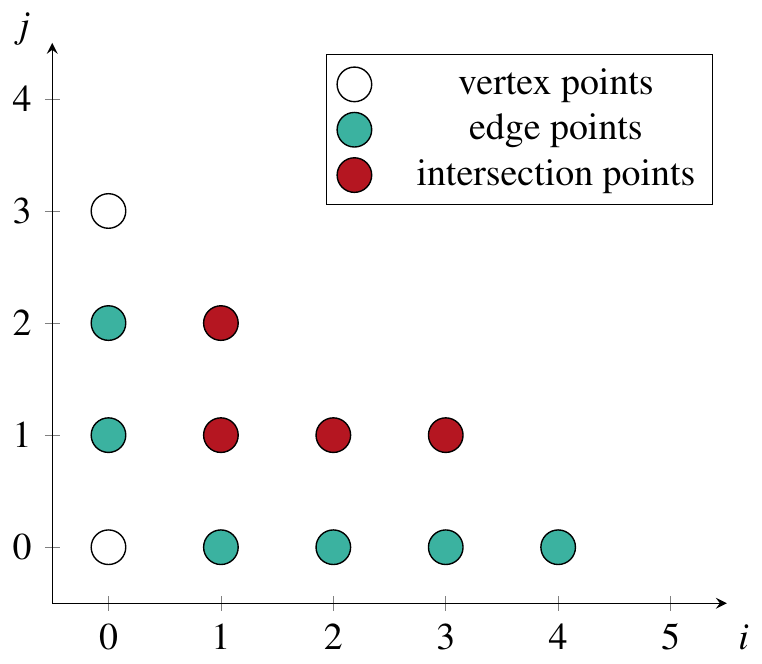}} 
  	\caption{Illustration of the degenerate Lissajous curve $\gamma_{3,2}$, its node points $\LubAbr_{3,2}$ and the corresponding index set $\Gamma_{3,2}^L$ according to the characterization \eqref{eq:LD2} in Proposition \ref{prop-2}.}
	\label{fig:lissajous1}
\end{figure}

Using \eqref{eq:generatingcurve} or \eqref{eq:chebycurve}, it is easy to see that the Lissajous curve $\Liscurve$ touches the boundary of the square $[-1,1]^2$ at exactly $2n+p$ points. We collect these boundary points in the set
\begin{equation}
\Lubout :=  \left\{  \Big( z_{i}^{n+p}, z_{j}^{n}\Big):  \begin{array}{l} i = 0, \ldots, n+p \\ j \in \{0,n\} \\ i+j = 0  \mod 2  \end{array}
\right\} \cup \left\{  \Big( z_{i}^{n+p}, z_{j}^{n}\Big):  \begin{array}{l} i \in \{0,n+p\} \\ j = 1, \ldots, n-1 \\ i+j = 0 \mod 2  \end{array}
\right\}. \label{eq:LDboundary} 
\end{equation}
We can further divide the boundary points $\Lubout$ in vertex and edge points:
\begin{align*} \Lubvert &:= \left\{\Liscurve(0), \Liscurve(\pi)\right\}, \\
\Lubedge &:= \Lubout \setminus \Lubvert.
\end{align*}   
Now, we get the following characterizations for the node set $\Lub$:

\begin{proposition} \label{prop-2}
If $n$ and $n+p$ are relatively prime, the set 
$\Lub$ contains $\frac{(n+p+1)(n+1)}{2}$ distinct points and is the union of self-intersection and boundary points
of the curve $\Liscurve$, i.e.
\begin{equation}
\Lub  = \Lubint \cup \Lubout = \left\{  \Big( z_{i}^{n+p}, z_{j}^{n}\Big): \quad \begin{array}{l} i = 0, \ldots, n+p \\ j = 0, \ldots, n \\ i+j = 0 \mod 2  \end{array}
\right\}. \label{eq:LD1} 
\end{equation}
$\Lub$ can be arranged in two rectangular grids
\begin{align} 
\Lubblack &:= \left\{  \Big( z_{i}^{n+p}, z_{j}^{n}\Big): \quad \begin{array}{l} i = 0, \ldots, n+p \\ j = 0, \ldots, n \\ \text{i,j even}  \end{array}
\right\}, \label{def:Luebeckpointsb}\\
\Lubwhite &:= \left\{  \Big( z_{i}^{n+p}, z_{j}^{n}\Big): \quad \begin{array}{l} i = 0, \ldots, n+p \\ j = 0, \ldots, n \\ \text{i,j odd}  \end{array} \right\}. \label{def:Luebeckpointsr}
\end{align}
Further, introducing the index sets
\begin{align}
\Lubindbasic &:= \left\{(i,j) \in \Nn_0^2: \; \frac{i}{n+p} + \frac{j}{n} < 1 \right\}, \label{eq:index1}\\
\Lubind &:= \Lubindbasic \cup \{(0,n)\}, \label{eq:index2}
\end{align}
the node set $\Lub$ can be characterized as
\begin{equation}
\Lub  = \left\{  \Big( z_{in+j(n+p)}^{n+p}, z_{in+j(n+p)}^{n}\Big): \; (i,j) \in \Lubind 
\right\}. \label{eq:LD2} 
\end{equation}
\end{proposition}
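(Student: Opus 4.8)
The plan is to work throughout with the reformulation $\Liscurve(t_k)=\bigl(\cos\tfrac{\pi k}{n+p},\,\cos\tfrac{\pi k}{n}\bigr)$, which follows from $nt_k=\tfrac{\pi k}{n+p}$ and $(n+p)t_k=\tfrac{\pi k}{n}$. For $m\in\Nn$ and $\ell\in\Zz$ let $\langle\ell\rangle_m\in\{0,\dots,m\}$ be the unique index with $\cos\tfrac{\pi\ell}{m}=\cos\tfrac{\pi\langle\ell\rangle_m}{m}$, obtained by reducing $\ell$ modulo $2m$ and reflecting the result at $m$; since $2m$ is even, $\langle\ell\rangle_m\equiv\ell\pmod 2$. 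Thus every node equals $\bigl(z_{\langle k\rangle_{n+p}}^{n+p},\,z_{\langle k\rangle_{n}}^{n}\bigr)$ with $\langle k\rangle_{n+p}+\langle k\rangle_{n}\equiv 2k\equiv 0\pmod 2$, so $\Lub$ is contained in the set on the right-hand side of \eqref{eq:LD1}. For the reverse inclusion I would fix an admissible pair $(i,j)$, i.e. $0\le i\le n+p$, $0\le j\le n$, $i\equiv j\pmod 2$, and observe that the congruences $k\equiv i\pmod{2(n+p)}$ and $k\equiv j\pmod{2n}$ are compatible modulo $\gcd(2(n+p),2n)=2$; by the Chinese remainder theorem they have a solution, unique modulo $\operatorname{lcm}(2(n+p),2n)=2n(n+p)$, and replacing its representative $k_0\in\{0,\dots,2n(n+p)-1\}$ by $2n(n+p)-k_0$ when $k_0>n(n+p)$ yields $k\in\{0,\dots,n(n+p)\}$ with $\langle k\rangle_{n+p}=i$ and $\langle k\rangle_{n}=j$, i.e. $(z_i^{n+p},z_j^{n})\in\Lub$. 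This establishes \eqref{eq:LD1}.

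For the cardinality, let $A$ and $B$ count the admissible pairs with $i+j$ even, respectively odd. Then $A+B=(n+p+1)(n+1)$ and $A-B=\bigl(\sum_{i=0}^{n+p}(-1)^i\bigr)\bigl(\sum_{j=0}^{n}(-1)^j\bigr)$, which vanishes because $n$ and $n+p$ are coprime and hence not both even; so $A=\tfrac12(n+p+1)(n+1)$, and this is the number of distinct points of $\Lub$ since the $z_i^{n+p}$ and the $z_j^{n}$ are pairwise distinct. The decomposition $\Lub=\Lubint\cup\Lubout$ is now just index bookkeeping: an admissible pair $(i,j)$ either satisfies $1\le i\le n+p-1$ and $1\le j\le n-1$ (the interior case, which by Proposition~\ref{prop-1} is exactly $\Lubint$) or has $i\in\{0,n+p\}$ or $j\in\{0,n\}$ (the boundary case, which is $\Lubout$), and these alternatives are complementary and disjoint. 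Likewise $i+j$ even means $i,j$ both even or both odd, which gives the two full tensor-product grids $\Lubblack$ and $\Lubwhite$.

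It remains to prove the index characterization \eqref{eq:LD2}; note first that $\bigl(z^{n+p}_{in+j(n+p)},z^{n}_{in+j(n+p)}\bigr)=\Liscurve(t_{in+j(n+p)})$ by the reformulation above, so \eqref{eq:LD2} asserts exactly that $(i,j)\mapsto\Liscurve(t_{in+j(n+p)})$ is a bijection from $\Lubind$ onto $\Lub$. On $\Nn_0^2$ the map $(i,j)\mapsto k:=in+j(n+p)$ is injective on any region containing no two distinct points whose difference is an integer multiple of $(n+p,-n)$, in particular on $\Lubind$; moreover $(i,j)\in\Lubindbasic$ is equivalent to $0\le k<n(n+p)$ with $k$ a nonnegative integer combination of $n$ and $n+p$, while $(0,n)$ contributes the single value $k=n(n+p)$. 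Counting these $k$ by $|\Lubindbasic|=\sum_{m=1}^{n+p}\lceil nm/(n+p)\rceil$ and using $\lceil x\rceil=\lfloor x\rfloor+1$ for non-integer $x$ together with the classical identity $\sum_{m=0}^{b-1}\lfloor am/b\rfloor=\tfrac12(a-1)(b-1)$ for $\gcd(a,b)=1$ gives
\[
|\Lubind|\;=\;1+(2n+p-1)+\tfrac{(n-1)(n+p-1)}{2}\;=\;\tfrac{(n+p+1)(n+1)}{2}\;=\;|\Lub|.
\]
Since $\Liscurve(t_k)\in\Lub$ for every $k\in\Zz$ (because $\Liscurve$ is $2\pi$-periodic and $\Liscurve(-t)=\Liscurve(t)$), it therefore suffices to show that $(i,j)\mapsto\Liscurve(t_{in+j(n+p)})$ is injective on $\Lubind$; it is then automatically the claimed bijection.

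I expect this injectivity to be the main obstacle. Suppose $(i,j),(i',j')\in\Lubind$ give the same node, and set $k=in+j(n+p)$, $k'=i'n+j'(n+p)$, both in $\{0,\dots,n(n+p)\}$. From $\cos\alpha=\cos\beta\iff\alpha\equiv\pm\beta\pmod{2\pi}$ one gets $k'\equiv\pm k\pmod{2(n+p)}$ and $k'\equiv\pm k\pmod{2n}$, leading to four cases. If the two signs agree, then $k'\equiv k\pmod{2n(n+p)}$ forces $k=k'$, hence $(i,j)=(i',j')$ by the injectivity noted above; if both signs are negative, $k+k'\equiv 0\pmod{2n(n+p)}$ forces $\{k,k'\}\subseteq\{0,n(n+p)\}$, which is again harmless. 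In a mixed case, say $k'-k\equiv 0\pmod{2(n+p)}$ and $k'+k\equiv 0\pmod{2n}$, I would write $k=nt-(n+p)s$ and $k'=nt+(n+p)s$ with integers $t\ge0$ and $s\ge1$ (the subcase $s=0$ reducing to $k=k'$), and then a short divisibility analysis of $in+j(n+p)=k$ and $i'n+j'(n+p)=k'$, again using $\gcd(n,n+p)=1$, shows that membership of both $(i,j)$ and $(i',j')$ in $\Lubind$ would force simultaneously $t\ge n+p$ and $t\le n+p-s-\tfrac{sp}{n}$, which is impossible for $s\ge1$; the remaining mixed case follows by interchanging the roles of $n$ and $n+p$ (equivalently of $i$ and $j$). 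This contradiction proves the injectivity and completes the proof.
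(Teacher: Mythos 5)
Your proposal is correct, but it follows a genuinely different route than the paper. You prove \eqref{eq:LD1} first and directly, by folding $\Liscurve(t_k)=(\cos\tfrac{\pi k}{n+p},\cos\tfrac{\pi k}{n})$ for one inclusion and by a Chinese-remainder argument modulo $2(n+p)$ and $2n$ (compatible precisely because $i+j$ is even) for the other; you then get the cardinality by a parity count over the grid, and you obtain \eqref{eq:LD2} by showing that $(i,j)\mapsto\Liscurve(t_{in+j(n+p)})$ is injective on $\Lubind$ (a sign/congruence case analysis) and that $|\Lubind|=|\Lub|$ via the classical identity $\sum_{m=0}^{b-1}\lfloor am/b\rfloor=\tfrac12(a-1)(b-1)$. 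The paper argues in the opposite direction: it starts from the right-hand side of \eqref{eq:LD2} as a subset of $\Lub$, pairs each $(i,j)$ with $i,j\geq 1$ to the two coincident parameters $k=in+j(n+p)$ and $k'=|in-j(n+p)|$, matches the number $\tfrac{(n-1)(n+p-1)}{2}$ of such pairs with the self-intersection count of Proposition \ref{prop-1}, identifies the $i=0$ or $j=0$ indices with the $2n+p$ boundary points, and closes the argument by the count $2|\Lubint|+|\Lubout|=n(n+p)+1$ of samples, from which \eqref{eq:LD1} follows. The paper's proof is shorter because Proposition \ref{prop-1} already supplies the number of self-intersections; yours is more self-contained on the combinatorial side (you only use \eqref{eq:LDinterior} to identify $\Lubint$ in the decomposition, not the intersection count) and it explicitly establishes that the parametrization by $\Lubind$ is a bijection. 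One caveat: your mixed-case injectivity step is only sketched, but the sketch is accurate -- from $k=nt-(n+p)s$, $s\geq 1$, coprimality gives $t-i\in(n+p)\Zz$ and $j+s\in n\Zz$ with positive quotient, hence $t\geq n+p$, while $k'=nt+(n+p)s\leq n(n+p)$ gives your upper bound on $t$, a contradiction -- so the argument completes as claimed.
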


\begin{proof}
Clearly, $\left\{  \big( z_{in+j(n+p)}^{n+p}, z_{in+j(n+p)}^{n}\big): \; (i,j) \in \Lubind 
\right\}$ is a subset of $\Lub$. For $(i,j) \in \Lubind$, the integers $k = i n + j(n+p)$ and $k' = |in - j(n+p)|$ are nonnegative and less or equal to $n(n+p)$. 
Evaluating the Lissajous curve $\Liscurve$ at $t_k$ and $t_{k'}$, we observe that the points $\Liscurve(t_k)$ and $\Liscurve(t_{k'})$ are equal. Further, since $n$ and $n+p$ 
are relatively prime, $k$ and $k'$ coincide if and only if $i = 0$ or $j = 0$. Therefore, for all positive integers $i,j \in \Nn$ with $\frac{i}{n+p} + \frac{j}{n} < 1$,
we obtain a pair of distinct points $t_k, t_{k'} \in [0,\pi]$ such that $\Liscurve(t_k) = \Liscurve(t_{k'})$. The total number of distinct pairs is given by
$\frac{(n-1)(n+p-1)}{2}$. Therefore, by Proposition \ref{prop-1}, these pairs describe exactly all self-intersection points of the curve $\Liscurve$, i.e. 
\[ \Lubint = \left\{  \Big( z_{in+j(n+p)}^{n+p}, z_{in+j(n+p)}^{n}\Big): \; (i,j) \in \Nn^2,\; \frac{i}{n+p} + \frac{j}{n} < 1 \right\}. \]
Further, the numbers $k = in + j(n+p)$ with $(i,j) \in \Lubind$ and $i = 0$ or $j = 0$ correspond precisely with the $2n+p$ boundary points of $\Liscurve$, i.e.
\[ \Lubout = \left\{  \Big( z_{in+j(n+p)}^{n+p}, z_{in+j(n+p)}^{n}\Big): \; (i,j) \in \Lubind, \; i = 0\; \text{or}\; j = 0 \right\}. \]
Therefore,
\[ \Lubint \cup \Lubout = \left\{  \big( z_{in+j(n+p)}^{n+p}, z_{in+j(n+p)}^{n}\big): \; (i,j) \in \Lubind \right\} \subset \Lub.\]
Since $2|\Lubint| + |\Lubout| = n(n+p) + 1$, we even have equality in the last formula. This together with the definitions of $\Lubint$ and $\Lubout$ in \eqref{eq:LDinterior} and \eqref{eq:LDboundary} implies equation \eqref{eq:LD1} as well as equation
\eqref{eq:LD2}. Finally, \eqref{def:Luebeckpointsb} and \eqref{def:Luebeckpointsr} follow from \eqref{eq:LD1}. \qed
\end{proof}

Regarding the cardinality of the subgrids $\Lubblack$ and $\Lubwhite$, we can distinguish between three cases depending on whether $n$ and $p$ are even or odd integers. 
The same holds for the location of the second vertex $\Liscurve(\pi) = ((-1)^n,(-1)^{n+p})$, whereas the first vertex is always given as $\Liscurve(0) = (1,1)$. 
Using the formulas \eqref{def:Luebeckpointsb} and \eqref{def:Luebeckpointsr}, we compute the different cardinalities and 
list them in Table \ref{tab:1}. The respective cases are also illustrated in Figure \ref{fig:lissajous}.

\begin{table}[htb] 
 \caption{Number of points in the different $\LubAbr$ sets.} \label{tab:1} 
 \vspace{-0.5cm}
 \begin{center}
  \begin{tabular}[t]{c} 
  \hline \noalign{\smallskip}
  $ |\Lub| = \frac{(n+p+1)(n+1)}{2} $ \hspace{1cm}
  $ |\Lubint| = \frac{(n+p-1)(n-1)}{2} $ \hspace{1cm}
  $ |\Lubout| = 2n+p $ \\[1mm] \hline \noalign{\bigskip}
 \end{tabular}
 \begin{minipage}[t]{0.325\textwidth} \centering
  \begin{tabular}[t]{l} \hline \noalign{\smallskip}
  Case (a): $n$ even, $p$ odd \\ \noalign{\smallskip}\hline \noalign{\smallskip}
  $|\Lubblack| = \frac{n+2}2\frac{n+p+1}2 $\\ \noalign{\smallskip}
  $|\Lubwhite| = \frac{n}2\frac{n+p+1}2$\\ \noalign{\smallskip}
  $\Liscurve(\pi) = (1,-1)$ \\ \hline
  \end{tabular}
 \end{minipage}
  \begin{minipage}[t]{0.325\textwidth} \centering
  \begin{tabular}[t]{l} \hline \noalign{\smallskip}
  Case (b): $n$ odd, $p$ odd \\ \noalign{\smallskip}\hline \noalign{\smallskip}
  $|\Lubblack| = \frac{n+1}2\frac{n+p+2}2 $\\ \noalign{\smallskip}
  $|\Lubwhite| = \frac{n+1}2\frac{n+p}2$\\ \noalign{\smallskip}
  $\Liscurve(\pi) = (-1,1)$ \\ \hline
  \end{tabular}
 \end{minipage}
  \begin{minipage}[t]{0.325\textwidth} \centering
  \begin{tabular}[t]{l} \hline \noalign{\smallskip}
  Case (c): $n$ odd, $p$ even \\ \noalign{\smallskip}\hline \noalign{\smallskip}
  $|\Lubblack| = \frac{n+1}2\frac{n+p+1}2 $\\ \noalign{\smallskip}
  $|\Lubwhite| = \frac{n+1}2\frac{n+p+1}2$\\ \noalign{\smallskip}
  $\Liscurve(\pi) = (-1,-1)$ \\ \hline
  \end{tabular}
 \end{minipage}
   \end{center}
\end{table}

\begin{figure}[htb]
	\centering
	\subfigure[	$\gamma_{2,3}$, $|\LubAbr_{2,3}| = 9$.]{\includegraphics[scale=0.55]{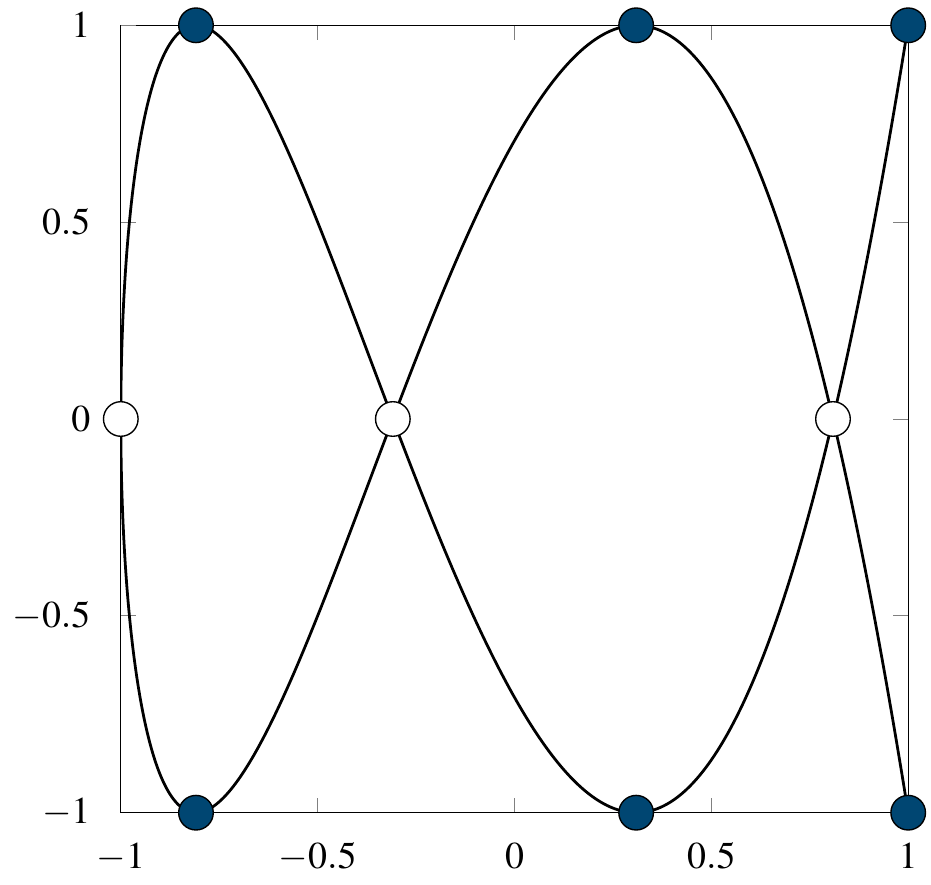}}
	\hfill	
	\subfigure[	$\gamma_{3,1}$, $|\LubAbr_{3,1}| = 10$.]{\includegraphics[scale=0.55]{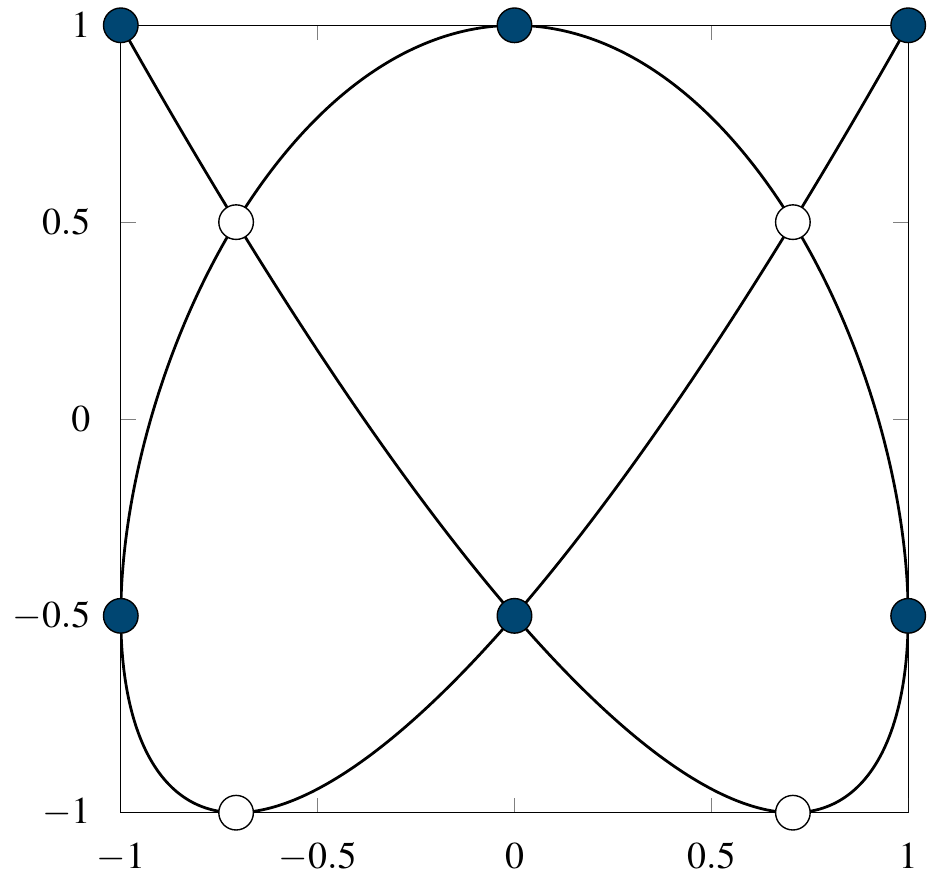}} \hfill
	\subfigure[	$\gamma_{3,2}$, $|\LubAbr_{3,2}| = 12$.]{\includegraphics[scale=0.55]{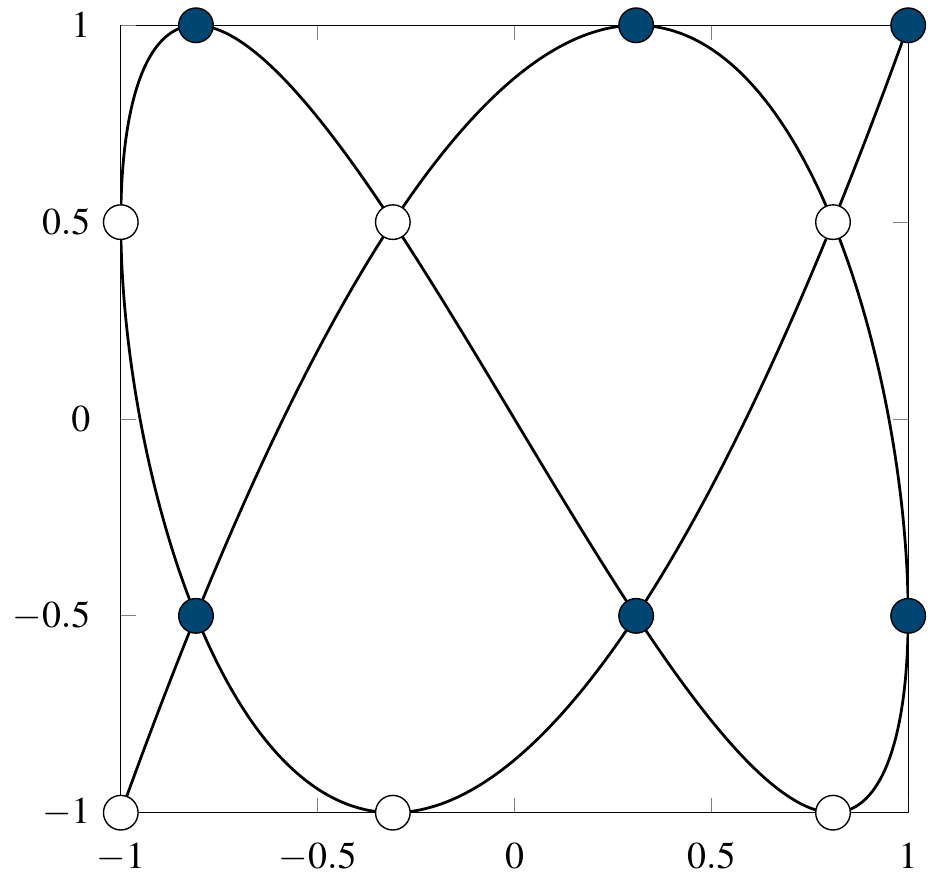}}
  	\caption{Illustration of degenerate Lissajous curves $\Liscurve$ and its node points $\Lub$ according to the cases in Table \ref{tab:1}. The points in the subgrids $\Lubblack$ and $\Lubwhite$ are colored in blue and white, respectively. 
  	}
	\label{fig:lissajous}
\end{figure}

To simplify the notation of the integers $k$ in \eqref{def:Luebeckpoints} that describe the same
point $\Aa \in \Lub$, we introduce on $\Zz$ the equivalence relation $\equi$ by
\begin{equation} \label{eq:equivalence}
k \equi k' \quad \Leftrightarrow \quad \Liscurve (t_k) = \Liscurve (t_{k'}).
\end{equation}
In this way, we obtain for each $\Aa \in \Lub$ a unique equivalence class $[\Aa]$ and we say that $k \in [\Aa]$ if $\Liscurve(t_k) = \Aa$. 
By the argumentation in the proof of Proposition \ref{prop-2}, there is exactly one ${0 \leq k \leq n(n+p)}$ in the equivalence class $[\Aa]$ if $\Aa \in \Lubout$
and exactly two if $\Aa \in \Lubint$ is a self-intersection point. 

\begin{remark}
In this article, the parameter choice $p = 1$ corresponds to the well-known Padua points studied extensively in 
\cite{BosDeMarchiVianelloXu2006,BosDeMarchiVianello2006,BosDeMarchiVianelloXu2007,CaliariDeMarchiSommarivaVianello2011,CaliariDeMarchiVianello2005,CaliarideMarchiVianello2008-2,CaliariDeMarchiVianello2008}.
For the Padua points, one can differ between four families of interpolation points by considering ninety degree rotations of the set $\LubAbr_{n,1}$. Similarly, one gets four families of interpolation nodes for general $p$. In the cases (a) and (b) considered in Table \ref{tab:1} and Figure \ref{fig:lissajous} this is also done by rotating $\Lub$ by ninety degrees. In the case (c) one has to combine ninety degree rotations and reflections with respect to one of the coordinate axes to obtain the four families. The respective generating Lissajous figure is a rotated and reflected version of $\Liscurve$. The interpolation theory developed in this paper can be applied to all four families of nodes. For simplicity, we will only consider the family $\Lub$ generated by $\Liscurve$.
\end{remark}

\begin{remark}
The set $\Lub$ is a two-dimensional Chebyshev lattice of rank $1$. According to the notation given in \cite{CoolsPoppe2011,PoppeCools2013}, the respective parameters of the Chebyshev lattice are $d = n(n+p)$ and $\mathbf{z} = [n,n+p]$. The following sections will show that lattices of this form allow unique polynomial interpolation.  
\end{remark}

\section{Lissajous node points and quadrature}

The point set $\Lub$ can be used for interpolation purposes as well as for quadrature rules on $[-1,1]^2$. 
In this section, we study first a quadrature formula based on point evaluations at the set $\Lub$. Of essential importance in our considerations is the operator
\[ \Emb: C([-1,1]^2) \to C([0,\pi]), \quad \Emb \! f(t) = f(\Liscurve(t)), \quad t \in [0,\pi],\]
that restricts a continuous function on $[-1,1]^2$ to the Lissajous trajectory $\Liscurve$. It is clear that $\Emb$ maps 
bivariate algebraic polynomials to even trigonometric polynomials on the interval $[0,\pi]$. 

To specify the spaces of bivariate polynomials we introduce
\begin{equation} \label{eq:spacebivariatepolynomials} \Pin = \spann\{T_{i}(x) T_j(y):\; i+j \leq n\},\end{equation}
where $T_i(x) =  \cos (i \arccos x)$, as before, denote the Chebyshev polynomials of the first kind. It is well-known (cf. \cite{Xu1996}) that $\{ T_i(x) T_j(y):\; i+j \leq n\}$ forms an orthogonal basis for the space $\Pin$ with respect to the inner product 
\begin{equation} \label{eq:scalarproduct}
 \langle f,g \rangle := \frac{1}{\pi^2} \int_{-1}^1 \int_{-1}^1 f(x,y) \overline{g(x,y)} \frac{1}{\sqrt{1-x^2}} \frac{1}{\sqrt{1-y^2}} \Dx{x} \Dx{y}.
\end{equation}
With the normalization
\begin{equation*} \label{eq:normalizedpolynomials} \hat{T}_i(x) = \left\{ \begin{array}{ll} 
                          1, & \quad \text{if $i = 0$},  \\
                          \sqrt{2} T_i(x), & \quad \text{if $i \neq 0$},
                          \end{array} \right.
\end{equation*}
we obtain the orthonormal basis $\{\hat{T}_i(x) \hat{T}_j(y): i+j \leq n \}$ of $\Pin$.

The first auxiliary result shows that for a large class of bivariate polynomials $P$ the restriction $\Emb P $ can be used to convert a double integral into a one dimensional integral of a trigonometric polynomial.
\begin{lemma} \label{lem:int2Dto1D}
For all bivariate polynomials $P$ with $\langle P, T_{k(n+p)}(x)T_{k n}(y) \rangle = 0$, $k \in \Nn$, the following formula holds:
\begin{equation} \label{eq:int2Dto1D}
\frac{1}{\pi^2} \int_{-1}^1 \int_{-1}^1 P(x,y) \frac{1}{\sqrt{1-x^2}} \frac{1}{\sqrt{1-y^2}} \Dx{x} \Dx{y} = \frac1{\pi} \int_0^{\pi} \Emb P(t) \Dx{t}. \end{equation}
\end{lemma}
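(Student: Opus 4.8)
The plan is to expand the polynomial $P$ in the orthogonal Chebyshev product basis and check the identity term by term. Write $P = \sum_{i,j} c_{i,j} T_i(x) T_j(y)$, where the sum is finite. Since both sides of \eqref{eq:int2Dto1D} are linear in $P$, it suffices to verify the formula for each basis element $T_i(x)T_j(y)$ that actually occurs in $P$, i.e. for all pairs $(i,j)$ that are \emph{not} of the excluded form $(k(n+p), kn)$ with $k \in \Nn$. The left-hand side is then elementary: by the orthogonality of the $T_i$ with respect to the Chebyshev weight, $\frac1{\pi^2}\int\int T_i(x)T_j(y) \tfrac{1}{\sqrt{1-x^2}}\tfrac{1}{\sqrt{1-y^2}}\,\Dx{x}\Dx{y}$ equals $1$ if $i=j=0$ and $0$ otherwise.

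For the right-hand side, I would use the substitution $x=\cos(nt)$, $y=\cos((n+p)t)$ coming from the definition \eqref{eq:generatingcurve} of $\Liscurve$, so that $\Emb\big(T_i(x)T_j(y)\big)(t) = T_i(\cos nt)\,T_j(\cos((n+p)t)) = \cos(int)\cos(j(n+p)t)$. Using the product-to-sum formula, this is $\tfrac12\big(\cos((in + j(n+p))t) + \cos((in - j(n+p))t)\big)$. Hence $\tfrac1{\pi}\int_0^\pi \Emb(T_iT_j)(t)\,\Dx{t}$ is nonzero only when one of the frequencies $in \pm j(n+p)$ vanishes, in which case the corresponding cosine contributes $\tfrac12$ to the integral average (and the term with the nonzero frequency integrates to $0$ over $[0,\pi]$ only if that frequency is a nonzero multiple of... — see below). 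So I need to analyze exactly when $in + j(n+p) = 0$ or $in - j(n+p) = 0$ for nonnegative integers $i,j$.

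The first equation $in + j(n+p) = 0$ forces $i = j = 0$, giving right-hand side $= 1$, matching the left. The second equation $in = j(n+p)$ is the crux: since $\gcd(n, n+p) = 1$, it holds iff $n+p \mid i$ and $n \mid j$, i.e. $i = k(n+p)$, $j = kn$ for some $k \in \Nn_0$. For $k=0$ this is again $i=j=0$; for $k \geq 1$ these are precisely the excluded indices, which do not appear in $P$ by hypothesis. Therefore, among the basis functions occurring in $P$, only $T_0 T_0$ contributes, and it contributes $1$ to both sides; all other terms contribute $0$ to both sides. A small point to dispatch carefully: when $in - j(n+p)$ is a nonzero integer $m$, I should confirm $\int_0^\pi \cos(mt)\,\Dx{t} = \tfrac{\sin(m\pi)}{m} = 0$, so such terms indeed vanish — and likewise for the $in+j(n+p)$ term when that frequency is nonzero. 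This makes the term-by-term matching exact.

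The main obstacle is simply the number-theoretic bookkeeping in the previous paragraph — correctly identifying that the resonance condition $in = j(n+p)$ is equivalent, via coprimality, to $(i,j) = (k(n+p), kn)$, and being careful about the edge case $k=0$ versus $k \geq 1$ so that the exclusion hypothesis on $P$ is invoked exactly where needed. Everything else (linearity, the two one-dimensional Chebyshev orthogonality integrals, the product-to-sum identity) is routine. Once the resonance analysis is pinned down, the proof is a direct comparison of the two sides on each surviving basis element.
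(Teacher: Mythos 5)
Your proposal is correct and is essentially the argument the paper itself relies on: the paper proves Lemma~\ref{lem:int2Dto1D} only by reference to \cite[Lemma 1]{BosDeMarchiVianelloXu2006} and \cite[Lemma 1]{ErbKaethnerAhlborgBuzug2014}, and the cited argument is precisely your expansion in the orthogonal Chebyshev product basis, the product-to-sum identity for $\cos(int)\cos(j(n+p)t)$, and the coprimality analysis showing the resonance $in=j(n+p)$ occurs exactly for the excluded indices $(i,j)=(k(n+p),kn)$. Your write-up simply supplies in full the details the paper delegates to those references, including the correct treatment of the $k=0$ case.
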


\begin{proof} Lemma \ref{lem:int2Dto1D} is a slight generalization of \cite[Lemma 1]{BosDeMarchiVianelloXu2006}.
With minor changes, the proof follows the same lines of argumentation as the proof of \cite[Lemma 1]{BosDeMarchiVianelloXu2006} and \cite[Lemma 1]{ErbKaethnerAhlborgBuzug2014}. \qed
\end{proof}

Next, we consider the spaces of bivariate polynomials corresponding to the index sets $\Lubindbasic$ and $\Lubind$ introduced in
\eqref{eq:index1} and \eqref{eq:index2}:

\begin{equation} \label{def:polynomialinterpolationspace}
\begin{array}{ll} \ds \Pibasic &:= \spann\{T_{i}(x) T_j(y):\; (i,j) \in \Lubindbasic \}, \\[2mm]
\ds \PiL     &:= \spann\{T_{i}(x) T_j(y):\; (i,j) \in \Lubind \}. 
\end{array}
\end{equation}

By the characterization of the set $\Lub$ given in \eqref{eq:LD2} it follows immediately that 
\begin{align*}
 \dim \PiL &= |\Lubind| = |\Lub| = \frac{(n+p+1)(n+1)}{2},\\
 \dim \Pibasic &= |\Lubindbasic| = |\Lubind|-1 = \frac{(n+p+1)(n+1)}{2}-1.
\end{align*}
To characterize the range of the operator $\Emb$, we need particular spaces of trigonometric polynomials on $[0,\pi]$:
\[\Pi_{N}^{\mathrm{e}} := \left\{ q(t) = \sum_{m=0}^{N} a_m \cos(m t) :\quad a_m \in \Rr, \; t \in [0,\pi] \right\}.\] 

\begin{lemma} \label{lem-isomorphism1} The operator $\Emb$ defines an isometry 
from the polynomial space $\Pibasic$ ($\PiL$) equipped with the inner product $\langle \cdot, \cdot \rangle$ given in \eqref{eq:scalarproduct} into the trigonometric space
$\Tripibasicb$ ($\Tripibasica$, respectively) equipped with the inner product $ \ds \langle q_1, q_2 \rangle = \frac1{\pi} \int_0^{\pi} \! q_1(t) q_2(t) \Dx{t}$.
\end{lemma}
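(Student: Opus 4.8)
The plan is to verify two things: first, that $\Emb$ maps the basis $\{\hat T_i(x)\hat T_j(y): (i,j)\in\Lubindbasic\}$ (resp.\ with $(0,n)$ adjoined) into the trigonometric space $\Tripibasicb$ (resp.\ $\Tripibasica$); second, that $\Emb$ preserves the two inner products. For the range statement, recall that $\Emb\bigl(\hat T_i(x)\hat T_j(y)\bigr)(t) = \hat T_i(\cos nt)\,\hat T_j(\cos(n+p)t)$, which is a constant multiple of $\cos(int)\cos((n+p)jt)$, and by the product-to-sum identity this equals a linear combination of $\cos((in+(n+p)j)t)$ and $\cos(|in-(n+p)j|t)$. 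Hence the highest frequency occurring is $in+(n+p)j$. For $(i,j)\in\Lubindbasic$ we have $\frac{i}{n+p}+\frac{j}{n}<1$, i.e.\ $in+(n+p)j < n(n+p)$, so since the frequency is an integer it is at most $n(n+p)-1$; this shows $\Emb(\Pibasic)\subseteq\Tripibasicb$. Adjoining $(i,j)=(0,n)$ contributes the single frequency $n(n+p)$, so $\Emb(\PiL)\subseteq\Tripibasica$.

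The isometry part rests on Lemma~\ref{lem:int2Dto1D}. First I would fix the (real) orthonormal basis $\{\hat T_i\hat T_j\}$ of $\Pibasic$ and show that $\{\Emb(\hat T_i\hat T_j)\}$ is orthonormal in $\Tripibasicb$ with respect to $\frac1\pi\int_0^\pi q_1 q_2\,\Dx t$. Since $\Emb$ is linear, once it carries an orthonormal basis to an orthonormal set it is automatically an isometry on the span. So it suffices to compute $\frac1\pi\int_0^\pi \Emb\bigl(\hat T_{i}\hat T_{j}\bigr)(t)\,\Emb\bigl(\hat T_{i'}\hat T_{j'}\bigr)(t)\,\Dx t$ for $(i,j),(i',j')\in\Lubindbasic$ and compare it with $\langle \hat T_i\hat T_j,\hat T_{i'}\hat T_{j'}\rangle = \delta_{ii'}\delta_{jj'}$. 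To do this, write $P(x,y) = \hat T_i(x)\hat T_j(y)\hat T_{i'}(x)\hat T_{j'}(y)$; this is a bivariate polynomial, and the integrand on the left of \eqref{eq:int2Dto1D} is exactly $\Emb P$. So I must check that $P$ satisfies the hypothesis $\langle P, T_{k(n+p)}(x)T_{kn}(y)\rangle = 0$ for all $k\in\Nn$, after which Lemma~\ref{lem:int2Dto1D} identifies $\frac1\pi\int_0^\pi\Emb P(t)\,\Dx t$ with $\langle \hat T_i\hat T_j, \hat T_{i'}\hat T_{j'}\rangle$, giving the desired orthonormality.

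The main obstacle is therefore this verification of the hypothesis of Lemma~\ref{lem:int2Dto1D} for the product $P$. Expanding $\hat T_i \hat T_{i'}$ via the product formula $2T_iT_{i'} = T_{i+i'}+T_{|i-i'|}$, the $x$-part of $P$ is a combination of $T_{i+i'}(x)$ and $T_{|i-i'|}(x)$, and similarly the $y$-part is a combination of $T_{j+j'}(y)$ and $T_{|j-j'|}(y)$. By orthogonality of the tensor Chebyshev basis under $\langle\cdot,\cdot\rangle$, the pairing $\langle P, T_{k(n+p)}(x)T_{kn}(y)\rangle$ vanishes unless the $x$-index $k(n+p)$ equals $i+i'$ or $|i-i'|$ \emph{and simultaneously} the $y$-index $kn$ equals $j+j'$ or $|j-j'|$. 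I would rule this out by a size argument: for $(i,j),(i',j')\in\Lubindbasic$ one has $i+i'<2(n+p)$ and $j+j'<2n$, while also $\frac{i}{n+p}+\frac{j}{n}<1$ and likewise for the primed pair; adding, $\frac{i+i'}{n+p}+\frac{j+j'}{n}<2$. If $k\geq 2$ the required equalities force $\frac{i+i'}{n+p}\ge\frac{k(n+p)}{n+p}=k\ge 2$ (or the analogous inequality with a difference, which is even smaller), contradicting the bound; and $k=1$ would require $i+i'=n+p$ and $j+j'=n$ (differences being too small to reach $n+p$ or $n$ given $i,i'<n+p$ etc.), i.e.\ $\frac{i+i'}{n+p}+\frac{j+j'}{n}=2$, again contradicting the strict inequality. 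Hence no $k\in\Nn$ works, the hypothesis holds, and the isometry on $\Pibasic$ follows. For $\PiL$ the only new basis vector is $\hat T_0(x)\hat T_n(y) = \hat T_n(y)$; its self-pairing and its pairings against the other basis vectors are handled the same way, the one extra point being that $\Emb(\hat T_n(y))(t)=\sqrt2\cos(n(n+p)t)$ has frequency $n(n+p)$, so it lands in $\Tripibasica$ and is orthonormal there, completing the proof. \qed
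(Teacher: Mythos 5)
Your proposal is correct and follows essentially the same route as the paper: map the orthonormal Chebyshev basis, bound the frequencies $in+j(n+p)$ to get the range statement, and invoke Lemma~\ref{lem:int2Dto1D} on products of basis elements, where the paper simply asserts $\langle P_1P_2, T_{n+p}(x)T_n(y)\rangle=0$ for $P_1,P_2\in\PiL$ while you verify this index condition explicitly. One small imprecision: when the index $(0,n)$ is involved the difference $|j-j'|$ \emph{can} reach $n$ (take $j=n$, $j'=0$), so your parenthetical "differences too small" does not cover that case; the conclusion still stands because the $x$-condition would then force $i+i'=n+p$ with $i=0$ and $i'\leq n+p-1$, which is impossible.
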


\begin{proof}
An orthonormal basis of $\Pibasic$ is given by $\left\{ \hat{T}_i(x) \hat{T}_j(y): (i,j) \in \Lubindbasic \right\}$. The image 
\begin{equation*} 
\Lagbastrig(t) := \LagbastrigE(t), \quad (i,j) \in \Lubindbasic,
\end{equation*}
of this orthonormal basis is explicitly given by
\begin{equation} \label{eq-orthogonaltrigonometricbasis} 
                                          \Lagbastrig(t) =  \left\{\begin{array}{ll} 
                                          \, 1, & \text{if}\; (i,j) = (0,0), \\[1mm]
                                          \sqrt{2}  \cos \left(i n t \right), & \text{if}\; j=0, i < n+p, \\[1mm]
                                          \sqrt{2}  \cos \left(j (n+p) t \right), & \text{if}\; i=0, j < n, \\[1mm]
                                          \,2 \cos \left(i n t \right) \cos \left(j (n+p) t \right) ,\quad & \text{for all other $(i,j) \in \Lubindbasic$}. 
                                          \end{array} \right.
\end{equation}
From the definition \eqref{eq:index1} of the index set $\Lubindbasic$, we obtain $in + j(n+p) < n(n+p)$ for $(i,j) \in \Lubindbasic$. Therefore, all $\Lagbastrig$ 
are trigonometric polynomials in the space $\Tripibasicb$ and we can conclude that $\Emb$ maps $\Pibasic$ into the space $\Tripibasicb$. 

Since $\Lubind = \Lubindbasic \cup \{(0,n)\}$ and $e_{0,n}(t) = \Emb (\hat{T}_n(y)) (t) = \sqrt{2}  \cos \left( n (n+p) t \right)$ is a trigonometric polynomial
of degree $n(n+p)$, we can also conclude that
$\Emb$ maps $\PiL$ into the space $\Tripibasica$

For $P_1,P_2 \in \PiL$, the product $P_1 P_2$ is a polynomial in the space $\Pi_{2n+2p-2}^2$ and satisfies $\langle P_1 P_2, T_{n+p}(x)T_{n}(y) \rangle = 0$. Therefore, 
by Lemma \ref{lem:int2Dto1D}, the set $\left\{ \Lagbastrig: \; (i,j) \in \Lubind \right\}$ is an orthonormal system in $\Tripibasica$, and thus, $\Emb$ an isometry
from $\PiL$ into the space $\Tripibasica$. By the same argumentation, the set $\left\{ \Lagbastrig: \; (i,j) \in \Lubindbasic \right\}$ 
is an orthonormal system in $\Tripibasicb$. This proves the isometry also for the space $\Pibasic$. \qed
\end{proof}

For points $\Aa \in \Lub$, we introduce the quadrature weights
\[w_{\Aa} := \left\{ \begin{array}{ll} 
\ds \frac{1}{2n(n+p)}, \quad & \text{if $\Aa \in \Lubvert$}, \\ \ds \frac{1}{n(n+p)}, \quad & 
\text{if $\Aa \in \Lubedge$}, \\
                             \ds \frac{2}{n(n+p)}, \quad & \text{if $\Aa \in \Lub^\mathrm{int}$},
                 
                \end{array}
\right.\]
and obtain the following quadrature rule for the node set $\Lub$: 

\begin{theorem} \label{thm:quadratureruleluebeck}
For all polynomials $P \in \Pi_{2n,2p}^2$ the quadrature formula
\begin{equation} \label{eq:quadratureruleluebeck}
\frac{1}{\pi^2} \int_{-1}^1 \int_{-1}^1 P(x,y) \frac{1}{\sqrt{1-x^2}} \frac{1}{\sqrt{1-y^2}} \Dx{x} \Dx{y} = \sum_{\Aa \in \Lub} w_\Aa P(\Aa) 
\end{equation}
is exact.
\end{theorem}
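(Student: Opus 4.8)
The plan is to transport the double integral onto the Lissajous curve by Lemma~\ref{lem:int2Dto1D}, to evaluate the resulting one-dimensional integral with an endpoint-corrected trapezoidal rule on $[0,\pi]$ that is exact on even trigonometric polynomials of degree at most $2n(n+p)-1$, and finally to regroup the equidistant sampling nodes $t_k$ according to the equivalence relation $\equi$, so that the halved, plain and doubled endpoint contributions assemble into the weights $w_\Aa$.

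First I would check the hypothesis of Lemma~\ref{lem:int2Dto1D} for every $P\in\Pi_{2n,2p}^2$ (recall that $\Pi_{2n,2p}^2=\spann\{T_i(x)T_j(y):\ \tfrac{i}{n+p}+\tfrac{j}{n}<2\}$): for each $k\in\Nn$ the index $(k(n+p),kn)$ satisfies $\tfrac{k(n+p)}{n+p}+\tfrac{kn}{n}=2k\ge 2$, so $T_{k(n+p)}(x)T_{kn}(y)$ does not occur in the Chebyshev expansion of $P$, whence $\langle P,T_{k(n+p)}(x)T_{kn}(y)\rangle=0$ by orthogonality of the basis in \eqref{eq:spacebivariatepolynomials}. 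Lemma~\ref{lem:int2Dto1D} then identifies the left-hand side of \eqref{eq:quadratureruleluebeck} with $\tfrac1\pi\int_0^\pi\Emb P(t)\,\Dx{t}$. Next I would note that $\Emb P\in\Tripibasicc$: since $\Emb\big(T_i(x)T_j(y)\big)(t)=\cos(int)\cos(j(n+p)t)=\tfrac12\cos\big((in+j(n+p))t\big)+\tfrac12\cos\big(|in-j(n+p)|\,t\big)$ and every $(i,j)$ in the index set of $\Pi_{2n,2p}^2$ satisfies $in+j(n+p)<2n(n+p)$, all frequencies occurring in $\Emb P$ are at most $2n(n+p)-1$.

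Writing $N=n(n+p)$, the analytic core is that the quadrature with nodes $t_k=\tfrac{\pi k}{N}$ ($k=0,\dots,N$) and weights $\omega_0=\omega_N=\tfrac1{2N}$, $\omega_k=\tfrac1N$ for $0<k<N$, integrates $\Tripibasicc$ exactly; indeed $\sum_{k=0}^N\omega_k\cos(m t_k)=\tfrac1{2N}\sum_{k=0}^{2N-1}\cos\!\big(\tfrac{\pi km}{N}\big)$, which by a geometric summation equals $1$ if $2N\mid m$ and $0$ otherwise, and thus reproduces $\tfrac1\pi\int_0^\pi\cos(mt)\,\Dx{t}$ for all $0\le m\le 2N-1$. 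Applying this to $q=\Emb P$ gives $\tfrac1\pi\int_0^\pi\Emb P(t)\,\Dx{t}=\sum_{k=0}^N\omega_k\,P(\Liscurve(t_k))$. I would then regroup this sum over the classes $[\Aa]$, $\Aa\in\Lub$: because $\Liscurve(t)=(1,1)$ forces $t\in 2\pi\Zz$ and $\Liscurve(t)=((-1)^n,(-1)^{n+p})$ forces $t\in\pi+2\pi\Zz$ (here $\gcd(n,n+p)=1$ is used), the vertex $(1,1)=\Liscurve(0)$ is attained among $t_0,\dots,t_N$ only at $k=0$ and the vertex $((-1)^n,(-1)^{n+p})=\Liscurve(\pi)$ only at $k=N$, each once. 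By the multiplicity statement recalled after Proposition~\ref{prop-2}, an edge point in $\Lubedge$ has exactly one preimage among $t_0,\dots,t_N$, and --- not being a vertex --- its index lies in $\{1,\dots,N-1\}$; an interior point in $\Lubint$ has exactly two preimages, both with indices in $\{1,\dots,N-1\}$. Summing the corresponding $\omega_k$ yields total weight $\tfrac1{2N}$, $\tfrac1N$ or $\tfrac2N$ according as $\Aa\in\Lubvert$, $\Lubedge$ or $\Lubint$, i.e.\ exactly $w_\Aa$; combining with the previous two identities proves \eqref{eq:quadratureruleluebeck}.

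I expect the only delicate point to be the bookkeeping in the last step: one must be sure that the two boundary vertices of $[-1,1]^2$ lying on $\Liscurve$ occur precisely at the parameter endpoints $t_0=0$ and $t_N=\pi$ --- so that their single preimages carry the halved weight $\tfrac1{2N}$ --- while every edge and interior node of $\Lub$ has all of its preimages with indices strictly between $0$ and $N$, so that the endpoint correction never spills onto them. The remaining ingredients --- the applicability of Lemma~\ref{lem:int2Dto1D}, the degree bound for $\Emb P$, and the exactness of the endpoint-corrected trapezoidal rule on $\Tripibasicc$ --- are routine.
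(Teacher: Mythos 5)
Your proposal is correct and follows essentially the same route as the paper's proof: apply Lemma~\ref{lem:int2Dto1D} (after checking orthogonality to $T_{k(n+p)}(x)T_{kn}(y)$), observe that $\Emb P \in \Tripibasicc$, use the exactness of the endpoint-corrected trapezoidal rule on $\Tripibasicc$, and regroup the nodes $t_k$ via the equivalence classes $[\Aa]$ to recover the weights $w_\Aa$. You merely spell out details the paper leaves implicit (the direct geometric-sum verification of the trapezoidal rule, which the paper cites from Zygmund, and the bookkeeping that the two vertices occur exactly at $t_0$ and $t_{n(n+p)}$), so no gap remains.
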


\begin{proof}
For all even trigonometric polynomials $q \in \Tripibasicc$ the following composite trapezoidal quadrature rule is exact (see \cite[Chapter X]{Zygmund}):
\[ \frac{1}{\pi} \int_0^{\pi} q(t) dt = \frac1{n(n+p)} \left( \frac12 q(0) + \sum_{k=1}^{n(n+p)-1} \hspace{-0.3cm} q\left( t_k \right) + \frac12 q(\pi)\right).\]
Since by the definitions \eqref{eq:spacebivariatepolynomials} and \eqref{def:polynomialinterpolationspace} of the polynomial spaces $\Pi^2_{2n+2p-1}$ and $\Pi_{2n,2p}^2$ we have $\Pi_{2n,2p}^2 \subset \Pi^2_{2n+2p-1}$ and $ \Pi^2_{2n,2p} \perp T_{n+p}(x)T_{n}(y)$, Lemma \ref{lem:int2Dto1D} yields the identity
\[
\frac{1}{\pi^2} \int_{-1}^1 \int_{-1}^1 P(x,y) \frac{1}{\sqrt{1-x^2}} \frac{1}{\sqrt{1-y^2}} \Dx{x} \Dx{y} = \frac1{\pi} \int_0^{\pi} \Emb P (t) \Dx{t}.
\]
For a polynomial $P \in \Pi^2_{2n,2p}$, the image $\Emb P$ is by a similar argumentation as in the proof of Lemma \ref{lem-isomorphism1} an element of $\Tripibasicc$. 
Therefore, combining the two identities above and using the definition \eqref{def:Luebeckpoints} of the points $\Lub$ as well as its characterizations in 
Proposition \ref{prop-2}, we get the quadrature formula
\begin{align*}
\frac{1}{\pi^2} \int_{-1}^1 \int_{-1}^1 P(x,y) \frac{1}{\sqrt{1-x^2}} \frac{1}{\sqrt{1-y^2}} \Dx{x} \Dx{y} &= \sum_{\Aa \in \Lub} w_\Aa P(\Aa).
\end{align*}
 \qed
\end{proof}

\begin{remark}
For the case $p = 1$ of the Padua points, Theorem \ref{thm:quadratureruleluebeck} is proven in a similar way in \cite{BosDeMarchiVianelloXu2006}. Quadrature formulas similar to \eqref{eq:quadratureruleluebeck} exist also for other related point sets as the Xu points (see \cite{Harris2010,MorrowPatterson1978,Xu1996}). 
The operator $\Emb$ was already introduced in \cite{ErbKaethnerAhlborgBuzug2014} to describe the relation between algebraic polynomials on $[-1,1]^2$ and trigonometric polynomials on the Lissajous trajectory. However, due to the geometric differences between degenerate and non-degenerate Lissajous curves, the range of the operator $\Emb$ consists of different trigonometric polynomials in the two cases. 
\end{remark}

\section{Lissajous node points and interpolation}

In this section, the object of study is the following interpolation problem: for given 
data values $f_\Aa \in \Rr$ at the node points $\Aa \in \Lub$, find a unique interpolating polynomial $\Lagpol$ in $[-1,1]^2$ such that 
\begin{equation}\label{eq:interpolationproblem}
\Lagpol (\Aa) = f_\Aa \quad \text{holds for all}\; \Aa \in \Lub.
\end{equation}

In the bivariate setting it is a priori not clear which polynomial space has to be chosen for the interpolation problem \eqref{eq:interpolationproblem}. Since $\dim \PiL = |\Lubind| = |\Lub|$, our primary choice is the polynomial space $\PiL$ defined in \eqref{def:polynomialinterpolationspace}. 
To prove the uniqueness of \eqref{eq:interpolationproblem} in $\PiL$, we study first an isomorphism between $\PiL$ and the subspace
\begin{equation} \label{def:spacetrigonometricpolynomials}
 \Tripi := \left\{  q \in \Tripibasica:  
\; \text{$q(t_k) = q(t_{k'})$ for all $k,k'$ with $k \!\equi\! k'$} \right\} 
\end{equation}
of the even trigonometric polynomials $\Tripibasica$.

\begin{theorem} \label{Thm-isomorphism} The operator $\Emb$ is an isometric isomorphism from $\PiL$ onto the subspace
$\Tripi$, equipped with the inner product given in Lemma \ref{lem-isomorphism1}.
\end{theorem}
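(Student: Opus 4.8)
The plan is to establish three things in sequence: that $\Emb$ maps $\PiL$ into $\Tripi$, that this map is injective, and that it is surjective; isometry is already supplied by Lemma~\ref{lem-isomorphism1}, from which injectivity is immediate. First I would note that for any $P \in \PiL$ the image $\Emb P$ lies in $\Tripibasica$ by Lemma~\ref{lem-isomorphism1}, and it automatically satisfies the compatibility conditions $q(t_k) = q(t_{k'})$ whenever $k \equi k'$: by the definition \eqref{eq:equivalence} of $\equi$, $k \equi k'$ means $\Liscurve(t_k) = \Liscurve(t_{k'})$, so $\Emb P(t_k) = P(\Liscurve(t_k)) = P(\Liscurve(t_{k'})) = \Emb P(t_{k'})$. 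Hence $\Emb(\PiL) \subseteq \Tripi$. Since Lemma~\ref{lem-isomorphism1} tells us $\Emb$ is an isometry on $\PiL$, it is in particular injective, so it remains only to prove surjectivity onto $\Tripi$.

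For surjectivity, the natural approach is a dimension count. We already have $\dim \PiL = |\Lubind| = |\Lub| = \frac{(n+p+1)(n+1)}{2}$, so it suffices to show $\dim \Tripi \leq \dim \PiL$, because an injective linear map into a space of dimension at most that of the domain must be onto. I would bound $\dim \Tripi$ by exhibiting $|\Lub|$ linear functionals on $\Tripibasica$ whose common kernel, intersected with $\Tripi$, is trivial — equivalently, by showing that the evaluation map $q \mapsto (q(t_k))_{0 \le k \le n(n+p)}$ restricted to $\Tripi$ is injective with image of dimension exactly $|\Lub|$. Concretely: the space $\Tripibasica$ has dimension $n(n+p)+1$ with basis $\{\cos(mt)\}_{m=0}^{n(n+p)}$, and the evaluation map $q \mapsto (q(t_0),\dots,q(t_{n(n+p)}))$ into $\Rr^{n(n+p)+1}$ is injective (the $t_k$ are $n(n+p)+1$ distinct nodes in $[0,\pi]$ and the relevant cosine-interpolation matrix is a nonsingular DCT-type matrix), hence an isomorphism by dimension. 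Under this isomorphism, $\Tripi$ corresponds exactly to the subspace of $\Rr^{n(n+p)+1}$ consisting of tuples that are constant on each $\equi$-class; by the remark following Proposition~\ref{prop-2}, the $\equi$-classes among $\{0,\dots,n(n+p)\}$ are precisely the $|\Lub|$ classes, one per point of $\Lub$, of which $2|\Lubint|$ have size two and the remaining $|\Lubout|$ have size one. That subspace therefore has dimension exactly $|\Lub| = \frac{(n+p+1)(n+1)}{2}$, so $\dim \Tripi = |\Lub| = \dim \PiL$.

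Combining the pieces: $\Emb : \PiL \to \Tripi$ is injective (being an isometry by Lemma~\ref{lem-isomorphism1}) between spaces of equal finite dimension, hence an isomorphism; and it is isometric for the stated inner products, again by Lemma~\ref{lem-isomorphism1}. The main obstacle I anticipate is the bookkeeping in the dimension count for $\Tripi$ — one must verify carefully that a trigonometric polynomial in $\Tripibasica$ is determined by its values at $t_0,\dots,t_{n(n+p)}$ (nonsingularity of the cosine collocation matrix at Chebyshev--Gauss--Lobatto-type nodes) and then match the number of independent constraints "$q(t_k) = q(t_{k'})$ for $k \equi k'$" against the count $|\Lub| = 2|\Lubint| + |\Lubout|$ from Proposition~\ref{prop-2}, using that exactly $|\Lubint|$ independent equalities are imposed (one for each self-intersection pair). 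Everything else is a direct consequence of Lemma~\ref{lem-isomorphism1} and the combinatorics of the $\equi$-classes already recorded in Section~\ref{sec:Lissajous}.
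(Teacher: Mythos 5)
Your argument is correct and shares the paper's skeleton exactly: show $\Emb(\PiL)\subseteq\Tripi$, get injectivity from the isometry of Lemma \ref{lem-isomorphism1}, and conclude surjectivity from $\dim\Tripi=\dim\PiL$. The only genuine divergence is in how $\dim\Tripi=|\Lub|$ is computed. The paper does it constructively: translates of the Dirichlet kernel $D_{n(n+p)}$ give a Lagrange basis $D_{n(n+p)}^k$ of $\Tripibasica$ at the nodes $t_k$, and summing over each equivalence class produces explicit functions $l_\Aa\in\Tripi$ with $l_\Aa(t_k)=1$ for $k\in[\Aa]$ and $0$ otherwise, which form a basis of $\Tripi$. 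You instead argue abstractly: evaluation $q\mapsto(q(t_0),\dots,q(t_{n(n+p)}))$ is an isomorphism of $\Tripibasica$ onto $\Rr^{n(n+p)+1}$ (your ``nonsingular DCT-type matrix'' worry is in fact trivial: under $x=\cos t$ a degree-$n(n+p)$ cosine polynomial vanishing at $n(n+p)+1$ distinct points of $[0,\pi]$ is an algebraic polynomial vanishing at that many distinct abscissae, hence zero), and $\Tripi$ is the preimage of the class-constant vectors, whose dimension is the number of classes, i.e.\ $|\Lub|$. Both routes are valid; the paper's pays an extra dividend, since the functions $l_\Aa$ are reused verbatim in the proof of Theorem \ref{thm:interpolation problem} to obtain the closed form $L_\Aa=\Emb^{-1}l_\Aa$, whereas your version proves the isomorphism with less machinery but leaves that construction still to be done. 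One bookkeeping slip to correct: there are $|\Lubint|$ classes of size two (comprising $2|\Lubint|$ indices) and $|\Lubout|$ singletons, so the relevant identity is $2|\Lubint|+|\Lubout|=n(n+p)+1$, while $|\Lub|=|\Lubint|+|\Lubout|$; your statements ``$2|\Lubint|$ have size two'' and ``$|\Lub|=2|\Lubint|+|\Lubout|$'' confuse classes with indices, but this does not affect the conclusion, because the dimension of the class-constant subspace is simply the number of classes, which is $|\Lub|$.
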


\begin{proof}
By Lemma \ref{lem-isomorphism1}, we know that $\Emb$ is an isometry from $\PiL$ into the space $\Tripibasica$. 
If $\Liscurve(t_k) \in \Lubint$ is a self-intersection point of the Lissajous curve $\Liscurve$, then $\Liscurve(t_{k'}) = \Liscurve(t_{k})$ holds for a $ 0 \leq k' \leq n(n+p)$, $k' \neq k$ and the values $\Emb\! P (t_k)$ and $\Emb\! P (t_{k'})$ coincide. 
This property is exactly encoded in the definition \eqref{def:spacetrigonometricpolynomials} of the space $\Tripi$. This implies that the operator $\Emb$ maps $\PiL$ into the subspace $\Tripi$. 

Now, it suffices to show that the dimensions of $\PiL$ and $\Tripi$ coincide. Then, we immediately obtain the surjectivity and, hence,
the bijectivity of $\Emb$. To this end, we consider in $\Tripibasica$ the Dirichlet kernel (see \cite[X Section $3$] {Zygmund})
\[D_{n(n+p)}(t) := 1 + \ds 2 \!\!\! \sum_{k=1}^{n(n+p)-1} \!\!\!\! \cos(kt) \; + \cos (n(n+p)t) = \frac{\sin(n(n+p) t ) \cos \frac{t}2 }{\sin \frac{t}2} .\]
The translates $\frac{1}{n(n+p)} D_{n(n+p)}(t-t_k)$, $k = 1, \ldots, 2n(n+p)$, form a 
basis for the space of trigonometric polynomials of the form $q(t) = \sum_{m=0}^{n(n+p)} a_m \cos(m t) + \sum_{m=0}^{n(n+p)-1} b_m \sin(mt)$. Therefore, for the space $\Tripibasica$ of even trigonometric polynomials we get
\[ D_{n(n+p)}^k(t) := \left\{ 
\begin{array}{ll} \frac{D_{n(n+p)}\left(t \right)}{n(n+p)}, & \text{if $k=0$,} \\
                  \frac{D_{n(n+p)}\left(t + t_k\right) + D_{n(n+p)}\left(t - t_k \right)}{n(n+p)}, \quad &  \text{if $k = 1, \ldots, n(n+p) - 1$,} \\ 
                  \frac{D_{n(n+p)}\left(t - \pi \right)}{n(n+p)} , & \text{if $k=n(n+p)$,}
\end{array} \right.\]
as a fundamental basis of Lagrange polynomials with respect to the equidistant points $t_k$, $k = 0, \ldots, n(n+p)$, i.e.
\[D_{n(n+p)}^k \left( t_{k'} \right) = \delta_{k,k'}, \quad 0 \leq k,k' \leq n(n+p).\]
Since not all $D_{n(n+p)}^k$ are contained in the subspace $\Tripi$, we introduce for $\Aa \in \Lub$ the linear combinations
\begin{equation} \label{eq:Lagrangebasispolynomialstrigonometric}
 l_\Aa(t) := \sum_{\substack{0 \leq k \leq n(n+p): \\ k \in [\Aa]}} D_{n(n+p)}^k(t),
\end{equation}
where $[\Aa]$ denotes the equivalence class introduced in \eqref{eq:equivalence}. The trigonometric polynomials $l_\Aa$ are elements of the space $\Tripi$ and 
\begin{equation} \label{eq:Lagrangebasispolynomialstrigonometricproperties}
  l_{\Aa}(t_k) = \left \{ \begin{array}{ll} 1 \quad & \text{if $k \in [\Aa]$,} \\ 0 &  \text{if $k \notin [\Aa]$.} \end{array} \right.
\end{equation}
Since $D_{n(n+p)}^k$, $0 \leq k \leq n(n+p)$, form a basis in the space $\Tripibasica$, the system $\{l_\Aa: \; \Aa \in \Lub \}$ is a basis in the linear subspace $\Tripi$. This implies the desired equality $\dim (\Tripi) = |\Lub| = \dim (\PiL)$.\qed
\end{proof}

Finally, we prove the uniqueness of the interpolation problem \eqref{eq:interpolationproblem} in $\PiL$ and give explicit formulas 
for the corresponding fundamental Lagrange polynomials. As a final auxiliary tool, we need the reproducing kernel $K_{n,p}^L: \Rr^2 \times \Rr^2 \to \Rr$ 
of the polynomial space $\PiL$. It is defined as
\begin{equation} \label{eq:reproducingkernel}
K_{n,p}^L(x_\Aa,y_\Aa; x_\Bb, y_\Bb) := \sum_{(i,j) \in \Gamma_{n,p}^L} \hat{T}_i(x_\Aa) \hat{T}_i(x_\Bb) \hat{T}_j(y_\Aa) \hat{T}_j(y_\Bb).
\end{equation}
With $K_{n,p}$ we denote the respective reproducing kernel for the subspace $\Pi_{n,p}^2$.

\begin{theorem} \label{thm:interpolation problem}
For $\Aa = (x_\Aa, y_\Aa) \in \Lub$, the polynomials  $L_{\Aa} := \Emb^{-1} l_{\Aa}$ are given as
\begin{equation}
 \label{eq:fundamentalpolynomialsofLagrangeinterpolation}
 \begin{array}{ll}
 L_{\Aa}(x,y) & \!\! = \, \displaystyle w_\Aa \left( K_{n,p}^L(x,y; x_\Aa,y_\Aa) - {\textstyle \frac12} \hat{T}_{n}(y) \hat{T}_{n}(y_\Aa) \right) \\[2mm]
 & \!\! = \, \displaystyle w_\Aa \left( K_{n,p}(x,y; x_\Aa,y_\Aa) + {\textstyle \frac12 } \hat{T}_{n}(y) \hat{T}_{n}(y_\Aa) \right) \end{array}
\end{equation}
and form the fundamental Lagrange polynomials in the space $\PiL$ with respect to the point set $\Lub$, i.e. 
\[ L_{\Aa}(\Bb) = \delta_{\Aa,\Bb}, \quad \Aa, \Bb \in \Lub.\]
The interpolation problem \eqref{eq:interpolationproblem} has a unique solution in $\PiL$ given by
\begin{equation*} \label{eq:interpolationpolynomial}
\Lagpol (x,y) = \sum_{\Aa \in \Lub} f_\Aa L_\Aa(x,y). 
\end{equation*}
\end{theorem}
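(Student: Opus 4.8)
The strategy is to transport everything through the isometric isomorphism $\Emb\colon \PiL \to \Tripi$ established in Theorem \ref{Thm-isomorphism}. Since $\{l_\Aa : \Aa \in \Lub\}$ is a basis of $\Tripi$ satisfying the cardinality conditions \eqref{eq:Lagrangebasispolynomialstrigonometricproperties}, the preimages $L_\Aa := \Emb^{-1} l_\Aa$ automatically form a basis of $\PiL$ with $L_\Aa(\Bb) = l_\Aa(t_k)$ for any $k \in [\Bb]$, which equals $\delta_{\Aa,\Bb}$. This immediately gives existence and uniqueness of the interpolant: $\Lagpol = \sum_{\Aa} f_\Aa L_\Aa$ is the unique element of $\PiL$ taking the prescribed values, because $\dim \PiL = |\Lub|$ and the $L_\Aa$ are linearly independent with the Lagrange property. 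So the real content is the explicit formula \eqref{eq:fundamentalpolynomialsofLagrangeinterpolation}.

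\textbf{Deriving the explicit formula.} First I would note that the two displayed expressions agree trivially: their difference is $w_\Aa\bigl(K_{n,p}^L - K_{n,p}\bigr)$, and by the definitions \eqref{eq:reproducingkernel} and the fact that $\Lubind = \Lubindbasic \cup \{(0,n)\}$, we have $K_{n,p}^L(x,y;x_\Aa,y_\Aa) - K_{n,p}(x,y;x_\Aa,y_\Aa) = \hat T_n(y)\hat T_n(y_\Aa)$, so subtracting $\tfrac12\hat T_n(y)\hat T_n(y_\Aa)$ from the first is the same as adding it to the second. Thus it suffices to verify the first expression. Call the right-hand side $\tilde L_\Aa(x,y)$; it lies in $\PiL$ by inspection (each summand is a basis element of $\PiL$). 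I would then check $\tilde L_\Aa(\Bb) = \delta_{\Aa,\Bb}$ for all $\Bb \in \Lub$ and invoke uniqueness to conclude $\tilde L_\Aa = L_\Aa$. To evaluate $\tilde L_\Aa(\Bb)$, use the reproducing property: for any $Q \in \PiL$, $\langle Q, K_{n,p}^L(\cdot;x_\Bb,y_\Bb)\rangle = Q(\Bb)$. The key computation is to express the point evaluation at $\Bb \in \Lub$ as an integral and then apply the quadrature rule of Theorem \ref{thm:quadratureruleluebeck}. Concretely, write $K_{n,p}^L(x,y;x_\Aa,y_\Aa)$ evaluated at $(x,y) = \Bb$ and recognize the sum $\sum_\Aa w_\Aa \tilde L_\Aa$-type expression, or more directly: show that for $P \in \Pi_{2n,2p}^2$ one has $\langle P, \tilde L_\Aa\rangle = w_\Aa P(\Aa)$ by comparing with \eqref{eq:quadratureruleluebeck}, and then specialize.

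\textbf{The cleanest route and the main obstacle.} The most transparent argument: the quadrature rule says $\langle P, 1\rangle = \sum_{\Bb \in \Lub} w_\Bb P(\Bb)$ for $P \in \Pi_{2n,2p}^2$. For fixed $\Aa \in \Lub$, apply this with $P(x,y) = Q(x,y)\,\bigl(K_{n,p}(x,y;x_\Aa,y_\Aa) + \tfrac12\hat T_n(y)\hat T_n(y_\Aa)\bigr)$ where $Q \in \PiL$; this product lies in $\Pi_{2n,2p}^2$ because $\PiL \subset \Pi_{n+1}^2$ roughly and the $\hat T_n$ terms only add degree in $y$. The left side evaluates, using the reproducing property of $K_{n,p}$ on $\Pi_{n,p}^2$ together with orthogonality of $\hat T_n(y)$, to $Q(\Aa)$ after the $\tfrac12\hat T_n$ correction term supplies exactly the missing $(0,n)$ contribution; the right side is $\sum_\Bb w_\Bb Q(\Bb) \cdot \bigl(\text{kernel at }\Bb\bigr)$. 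Since this holds for all $Q \in \PiL$ and the $w_\Bb(\cdots)$ are the values of the candidate Lagrange polynomials, one reads off $L_\Aa(\Bb) = \delta_{\Aa,\Bb}$, hence the formula. The main obstacle is the bookkeeping around the exceptional index $(0,n)$ and the vertex/edge/interior weight cases: one must check carefully that the $\tfrac12\hat T_n(y)\hat T_n(y_\Aa)$ correction is precisely what is needed to pass between $K_{n,p}$ and $K_{n,p}^L$ while keeping the quadrature exact (the product staying in $\Pi_{2n,2p}^2$ and orthogonal to $T_{n+p}(x)T_n(y)$), mirroring the Padua-point computation of \cite{BosDeMarchiVianelloXu2006} but with the asymmetric index set $\Lubind$.
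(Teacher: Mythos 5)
Your first half (the Lagrange property $L_\Aa(\Bb)=\delta_{\Aa,\Bb}$, the basis property, and uniqueness of the interpolant, all transported through the isomorphism of Theorem \ref{Thm-isomorphism}) is exactly the paper's argument. The gap is in your verification of the explicit formula. You apply the quadrature rule of Theorem \ref{thm:quadratureruleluebeck} to $P=Q\cdot\bigl(K_{n,p}(\cdot;x_\Aa,y_\Aa)+\tfrac12\hat T_n(y)\hat T_n(y_\Aa)\bigr)$ for \emph{all} $Q\in\PiL$, claiming the product lies in $\Pi_{2n,2p}^2$. This fails precisely when $Q$ has a nonzero $(0,n)$-component: the product then contains $\hat T_n(y)^2=1+T_{2n}(y)$, and $T_{2n}(y)\notin\Pi_{2n,2p}^2$ (indeed $\Emb(T_{2n}(y))(t)=\cos(2n(n+p)t)\notin\Tripibasicc$). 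This is not a removable technicality: $T_{2n}(y_\Bb)=1$ at every node of $\Lub$ while its integral vanishes, so the quadrature is genuinely not exact there. Your evaluation of the left-hand side is also incorrect in the same direction: for $Q=\hat T_n(y)$ one gets $\bigl\langle Q,\,K_{n,p}(\cdot;x_\Aa,y_\Aa)+\tfrac12\hat T_n(y)\hat T_n(y_\Aa)\bigr\rangle=\tfrac12\hat T_n(y_\Aa)\neq Q(\Aa)$; the $\tfrac12$-term supplies only \emph{half} of the missing $(0,n)$ contribution. So both sides of your identity break down exactly at the exceptional index whose factor $\tfrac12$ is the whole content of formula \eqref{eq:fundamentalpolynomialsofLagrangeinterpolation}.

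The argument is salvageable but needs an extra step you do not supply. Restricting to $Q\in\Pibasic$ the quadrature is legitimate and gives $\sum_{\Bb\in\Lub}w_\Bb Q(\Bb)\,\tilde L_\Aa(\Bb)=w_\Aa Q(\Aa)$; since $\dim\Pibasic=|\Lub|-1$, this only determines the nodal values of $\tilde L_\Aa$ up to adding a multiple of $\bigl(\hat T_n(y_\Bb)\bigr)_{\Bb\in\Lub}$, which spans the annihilator of the evaluations of $\Pibasic$ (because $Q\,\hat T_n(y)\in\Pi_{2n,2p}^2$ for $Q\in\Pibasic$). One must then kill this one-dimensional ambiguity separately, e.g.\ by computing $\sum_\Bb w_\Bb\hat T_n(y_\Bb)\tilde L_\Aa(\Bb)$ directly using $\hat T_n(y_\Bb)^2=2$ and $\sum_\Bb w_\Bb=1$. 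The paper sidesteps all of this: it expands $l_\Aa$ (a sum of one or two translated Dirichlet kernels, with $k'=|rn-s(n+p)|$ for interior points) in the orthonormal trigonometric basis $e_{i,j}$, computes the coefficients $\langle l_\Aa,e_{i,j}\rangle$ by direct integration --- which is where the coefficient $\tfrac{w_\Aa}{2}\hat T_n(y_\Aa)$ at $(0,n)$ emerges --- and then applies $\Emb^{-1}$, so no exactness of the discrete quadrature beyond Lemma \ref{lem-isomorphism1} is ever needed.
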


\begin{proof}
Theorem \ref{Thm-isomorphism} and property \eqref{eq:Lagrangebasispolynomialstrigonometricproperties} of the basis functions $l_{\Aa} \in \Tripi$ introduced in \eqref{eq:Lagrangebasispolynomialstrigonometric} imply that
the polynomials $L_{\Aa} = \Emb^{-1} l_{\Aa}$ satisfy $L_{\Aa}(\Bb) = \delta_{\Aa,\Bb}$ for $\Bb \in \Lub$. 
Moreover, since the system ${\{l_\Aa: \; \Aa \in \Lub \}}$ is a basis in $\Tripi$, Theorem \ref{Thm-isomorphism} implies that also the system 
$\{L_\Aa: \; \Aa \in \Lub \}$ forms a basis of Lagrange polynomials in the space $\PiL$. 

Finally, we compute the decomposition of the trigonometric polynomials $l_\Aa$ in the orthonormal basis $e_{i,j}$.  Then, using the inverse $\Emb^{-1}$, we get the representation \eqref{eq:fundamentalpolynomialsofLagrangeinterpolation} for the Lagrange polynomials $L_\Aa$. 

For the calculations, we use the characterization \eqref{eq:LD2} for points $\Aa \in \Lub$, i.e. $\Aa = (x_\Aa, y_\Aa) = (z_{k}^{n+p}, z_{k}^{n})$  with $k = rn + s(n+p)$ and $(r,s) \in \Lubind$. 

We take first a look at the self-intersection points $\Aa \in \Lubint$.
From the proof of Proposition \ref{prop-2}, we know that the second integer $0 \leq k' \leq n(n+p)$ representing the self-intersection point $\Aa$ is given by $k' = |rn - s(n+p)|$. 
In this way, for the functions $l_\Aa$, $\Aa \in \Lubint$ we get
\begin{align*}
l_\Aa(t) &= \frac{1}{n(n+p)} \left( D_{n(n+p)}^k(t) + D_{n(n+p)}^{k'}(t) \right) \\ &= \frac{2}{n(n+p)} 
\left(  1 +  \sum_{m=1}^{n(n+p)}   (2-\delta_{m,n(n+p)}) 
                        \left( \cos \frac{k m \pi}{n (n+p)}+ \cos \frac{k' m \pi}{n (n+p)}\right) \cos m t  \right) \\
                        &= \frac{2}{n(n+p)} 
\left(  1 +  \sum_{m=1}^{n(n+p)}   (2-\delta_{m,n(n+p)}) 
                        \cos \frac{r m \pi}{n+p} \cos \frac{s m \pi}{n} \cos m t  \right),                                                              
\end{align*}
where $\delta_{i,j}$ denotes the Kronecker delta, i.e. $\delta_{i,j} = 1$ if $i = j$ and $\delta_{i,j} = 0$ otherwise. 
Analogous calculations for the vertex points $\Aa \in \Lubvert$ and the edge points $\Aa \in \Lubedge$ yield the same formula for $l_\Aa$ differing only in the weight $w_\Aa$. So, for general $\Aa \in \Lub$, we get
\[l_\Aa(t) = w_\Aa 
\left(  1 + \sum_{m=1}^{n(n+p)}  (2-\delta_{m,n(n+p)}) 
                        \cos \frac{r m \pi}{n+p} \cos \frac{s m \pi}{n} \cos m t  \right).  \]
Now, using formula \eqref{eq-orthogonaltrigonometricbasis} for the 
basis functions $\Lagbastrig$, we get after a small calculation the following formula for the coefficients $\left\langle l_\Aa, \Lagbastrig \right\rangle = \frac{1}{\pi} \int_0^{\pi} l_\Aa (t) \Lagbastrig(t) \Dx{t}$:
\begin{align*}
\left\langle  l_\Aa, \Lagbastrig \right\rangle &= \left\{
\begin{array}{ll}
w_\Aa, & \text{if}\; (i,j) = (0,0), \\
w_\Aa \frac1{\sqrt{2}} \cos k \pi, & \text{if}\;i = 0, j = n, \\
w_\Aa \sqrt{2} \cos \frac{ k j \pi }{n} , & \text{if}\; i = 0, j < n, \\
w_\Aa \sqrt{2} \cos  \frac{k i \pi }{n+p}, & \text{if}\; i \neq 0, j = 0,\\
w_\Aa 2 \cos \frac{ k i \pi }{n+p} \cos \frac{ k j \pi }{n}, \quad &  \text{for all other $(i,j) \in \Lubind$,}
\end{array} \right. \\[2mm]
 &= w_\Aa \left\{ 
\begin{array}{ll}
\frac1{2}\, \hat{T}_{n}(y_\Aa), & \text{if}\; i = 0, j = n, \\[2mm]
\hat{T}_{i}(x_\Aa) \hat{T}_j(y_\Aa),\quad\quad\; & \text{if}\; (i,j) \in \Lubindbasic.
\end{array} \right.
\end{align*}

Therefore, the decomposition of $l_\Aa$ in the basis $e_{i,j}$ can be written as
\[l_\Aa(t) = \frac{w_\Aa}{2} \hat{T}_{n}(y_\Aa) e_{0,n}(t)\;  + \!
\sum_{\;\;(i,j) \in \Lubindbasic} \!\! w_\Aa \hat{T}_{i}(x_\Aa) \hat{T}_j(y_\Aa)  \Lagbastrig(t). \]
Now, the inverse mapping $\Emb^{-1}$ and the definition \eqref{eq:reproducingkernel} of $K_{n,p}^L$ yield
\[L_\Aa(x,y) = \Emb^{-1} l_\Aa (x,y) = w_\Aa \left( K_{n,p}^L(x,y; x_\Aa, y_\Aa) - \frac12 \hat{T}_{n}(y) \hat{T}_{n}(y_\Aa) \right). \]
\qed
\end{proof}

\begin{figure}[htb]
	\centering
	\subfigure[	Lagrange polynomial $L_\Aa$, $\Aa = (z_4^{11}, z_6^9)$.]{\includegraphics[scale=0.56]{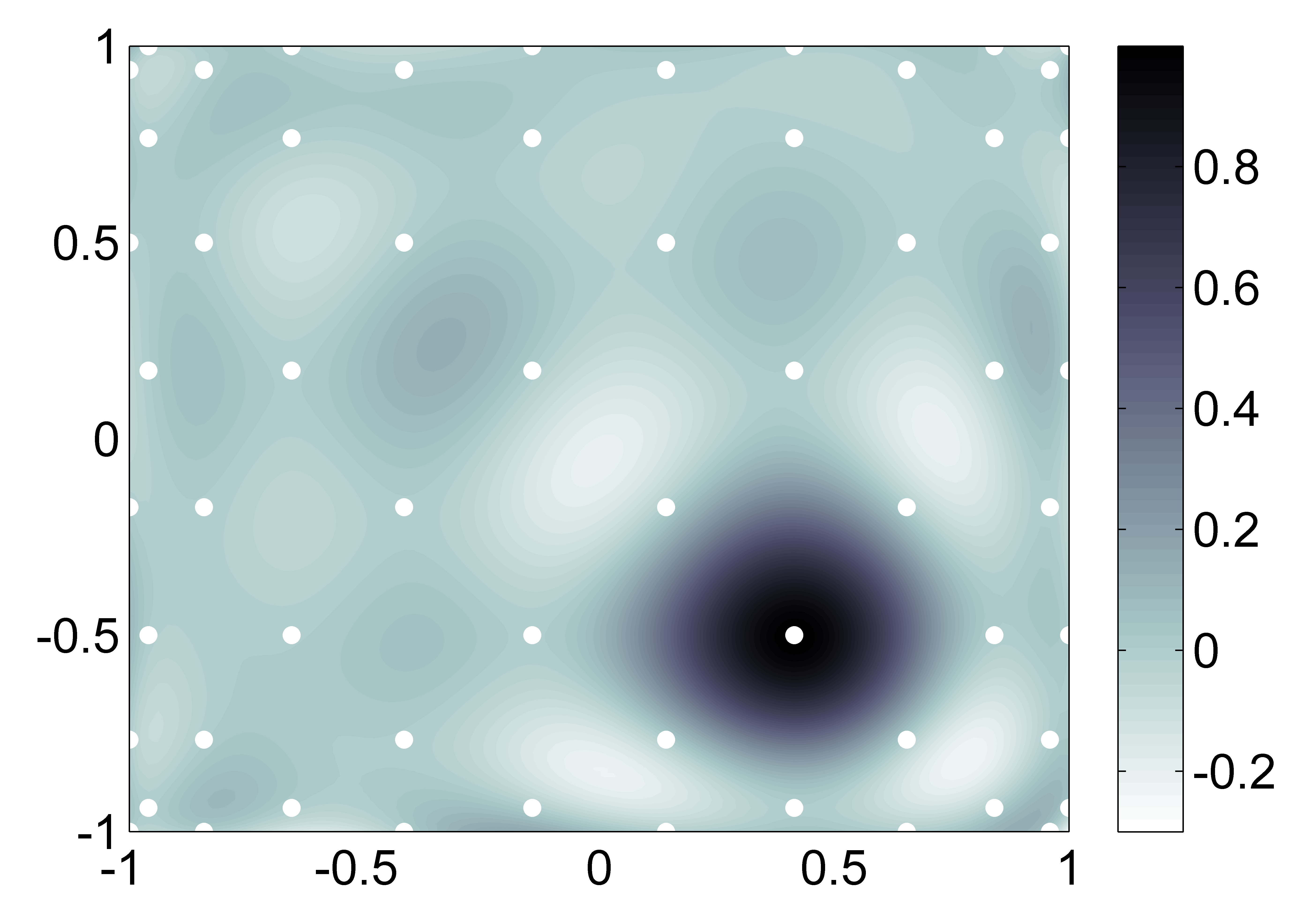}}
	\hfill	
	\subfigure[ Lagrange polynomial $L_\Aa$, $\Aa = (z_{10}^{11}, z_4^9)$.]{\includegraphics[scale=0.56]{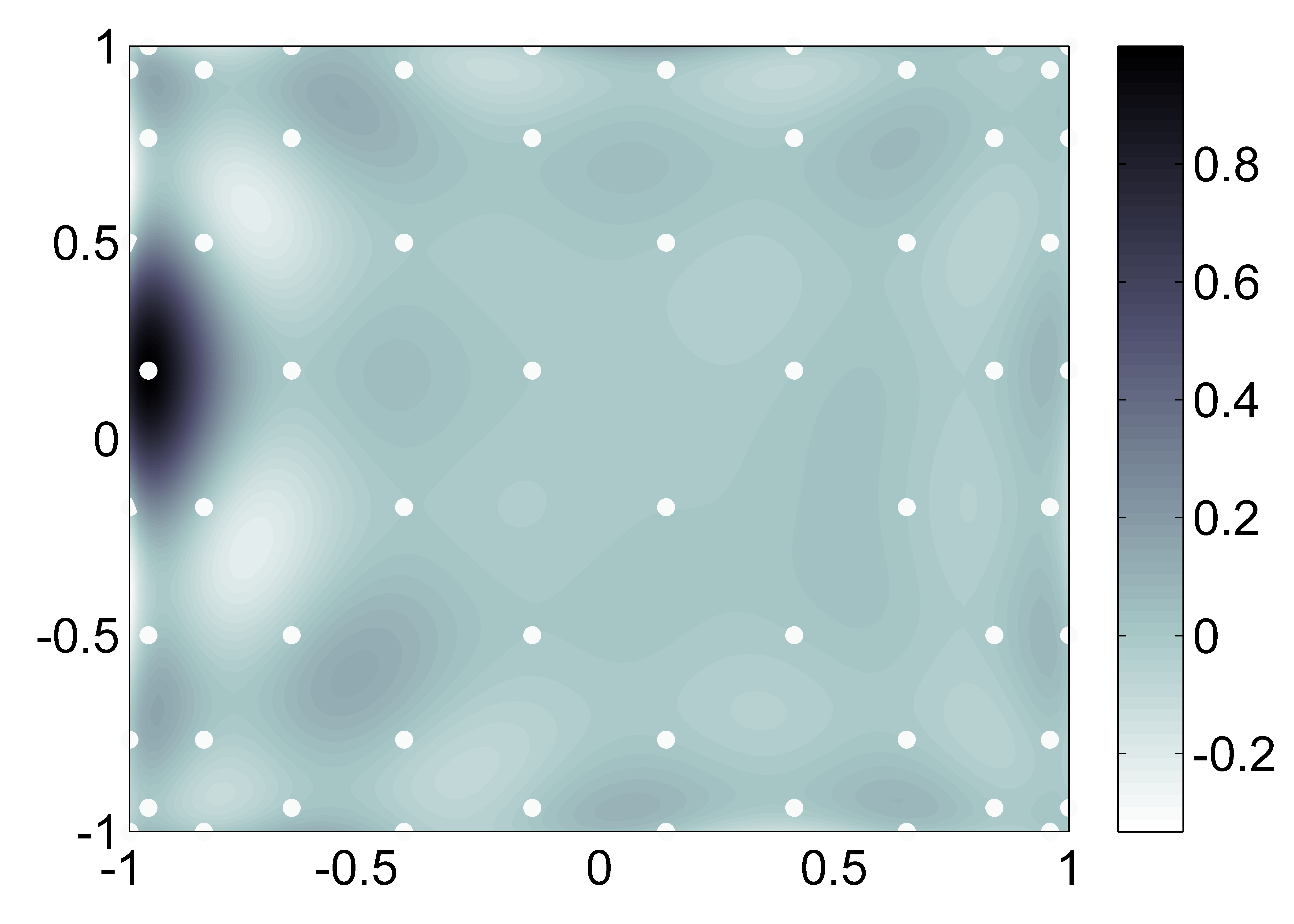}} 
  	\caption{Space localization of the Lagrange polynomials $L_{\Aa} \in \Pi_{9,2}^{2,L}$ for different $\Aa \in \LubAbr_{9,2}$.}
	\label{fig:lagrangebasis}
\end{figure}

Formula \eqref{eq:fundamentalpolynomialsofLagrangeinterpolation} in Theorem \ref{thm:interpolation problem} 
allows to compute the fundamental Lagrange polynomials $L_\Aa$ in an explicit way. 
Two examples of $L_\Aa$ are illustrated in Figure \ref{fig:lagrangebasis}. 
Moreover, using formula \eqref{eq:fundamentalpolynomialsofLagrangeinterpolation} we can rewrite the interpolating polynomial $\Lagpol(x,y)$ in terms of the orthonormal Chebyshev basis $\{ \hat{T}_i(x) \hat{T}_j(y):\; (i,j) \in \Gamma_{n,p}^L\}$. 
In this way, we obtain the representation
\[ \Lagpol(x,y) = \sum_{(i,j) \in \Gamma_{n,p}^L} c_{i,j} \hat{T}_i(x) \hat{T}_j(y)\]
with the Fourier-Lagrange coefficients $c_{i,j} = \langle \Lagpol, \hat{T}_i(x) \hat{T}_j(y) \rangle$ given by
\begin{equation} \label{eq:coeffinterpolation}
 c_{i,j} = \left\{ \begin{array}{ll}
  \ds \!\!\! \sum_{\;\;\;\Aa \in \Lub} \!\!\! w_\Aa f_\Aa \, \hat{T}_i(x_\Aa) \hat{T}_j(y_\Aa),\; & \text{if}\; (i,j) \in \Lubindbasic, \\[2mm]
                                                                          \ds \frac12 \!\!\! \sum_{\;\;\;\Aa \in \Lub} \!\!\! w_\Aa f_\Aa \, \hat{T}_{n}(y_\Aa), & \text{if} \; (i,j) = (0,n). \\
                                                                          \end{array}
\right.
\end{equation}

The computation of the coefficients $c_{i,j}$ in \eqref{eq:coeffinterpolation} can be formulated more compactly using the matrix notation
\begin{equation} \label{eq:compcoeff} \Ccc_{n,p} = \left( \Ttt_x(\Lub) \,\Ddd_f(\Lub)\, \Ttt_y(\Lub)^T \right) \odot \Mmm_{n,p},\end{equation}
where $\odot$ denotes pointwise multiplication of the single matrix entries. Here, the matrix $\Ccc_{n,p} = (c_{i,j}) \in \Rr^{(n+p) \times (n+1)}$ contains the coefficients
\[ c_{i,j} = \left\{ \begin{array}{ll} \langle \Lagpol, \hat{T}_i(x) \hat{T}_j(y) \rangle,\quad & \text{if $(i,j) \in \Gamma_{n,p}^L$,}\\ 0, & \text{otherwise}.
                       \end{array} \right.\]
The given data $f_\Aa$ and the weights $w_\Aa$ are arranged in the diagonal matrix
\[ \Ddd_f(\Lub) = \diag \left( w_\Aa f_\Aa,\; \Aa \in \Lub \right) \in \Rr^{|\Lub|\times|\Lub|}.\]
Further, all evaluations $\hat{T}_i(x_\Aa)$ and $\hat{T}_j(y_\Aa)$ are collected in the matrices
\begin{align*} \Ttt_x(\Lub) &= \underbrace{\begin{pmatrix}
                   &\hat{T}_0(x_\Aa) & \\ \cdots & \vdots & \cdots \\ &\hat{T}_{n+p-1}(x_\Aa)& 
                  \end{pmatrix}}_{\Aa \in \Lub} ,\qquad \Ttt_y(\Lub) = \underbrace{\begin{pmatrix}
                   &\hat{T}_0(y_\Aa) & \\ \cdots & \vdots & \cdots \\ &\hat{T}_{n}(y_\Aa)& 
                  \end{pmatrix}}_{\Aa \in \Lub}.
\end{align*}
Finally, the mask $\Mmm_{n,p} = (m_{i,j}) \in \Rr^{(n+p) \times (n+1)}$ is given by
\[ m_{i,j} = \left\{ \begin{array}{ll}
                     1, & \text{if}\; (i,j) \in \Gamma_{n,p}, \\
                     1/2, \quad & \text{if}\; (i,j) = (0,n),\\
                     0, & \text{if}\; (i,j) \notin \Gamma_{n,p}^L.
                     \end{array}
\right.\]
For $\Bb \subset \Rr^2$, also the point evaluations $\Lagpol (\Bb)$ of the 
interpolation polynomial can be written compactly as vector-matrix-vector product
\begin{equation} \label{eq:comppol} \Lagpol(\Bb) = \Ttt_{x}(\Bb)^T \Ccc_{n,p} \Ttt_{y}(\Bb) \end{equation}
with $\Ttt_{x}(\Bb)^T = \left(\hat{T}_0(x_\Bb), \cdots, \hat{T}_{n+p-1}(x_\Bb)\right)$ and $\Ttt_{y}(\Bb)^T = \left(\hat{T}_0(y_\Bb), \cdots, \hat{T}_{n}(y_\Bb)\right)$.

\begin{remark}
The formulation of Theorem \ref{thm:interpolation problem} is almost identical to the formulation of Theorem $3$ in \cite{ErbKaethnerAhlborgBuzug2014}. The main structural difference between the two results lies in the form of the index set $\Gamma_{n,p}^L$ and in the weights $w_\Aa$. In the case of non-degenerate Lissajous curves, the index set $\Gamma_{n,p}^L$ is asymptotically $4$ times larger than for degenerate Lissajous curves. Also, in the non-degenerate case there are no vertex points. Structural differences can be also found in the technical aspects of the respective proofs. This is again due to the fact that the operator $\Emb$ maps onto different spaces of trigonometric polynomials in the two cases. 

For the Padua points ($p = 1$), Theorem \ref{thm:interpolation problem} was proven in a different way. The uniqueness of the interpolation problem was derived using an ideal theoretic approach (see \cite{BosDeMarchiVianelloXu2007}), whereas formula \eqref{eq:fundamentalpolynomialsofLagrangeinterpolation} was proven using an explicit formula for the reproducing kernel. 

Also, the matrix formulations in \eqref{eq:compcoeff} and \eqref{eq:comppol} are very similar to the formulations
in \cite{ErbKaethnerAhlborgBuzug2014}. Here, the main difference lies in the form of the mask $\Mmm_{n,p}$ which encodes the index set $\Lubind$. For the Padua points the mask $\Mmm_{n,p}$ is an upper left triangular matrix (cf. \cite{CaliariDeMarchiVianello2008}). 
\end{remark}

\begin{remark}
If we substitute the index set $\Lubind$ in \eqref{eq:index2} by 
\[ \Gamma_{n,p}^{\tilde{L}} := \Lubindbasic \cup \{(n+p,0)\}\]
all the results of this paper can be proven in an analogous way also for the altered 
index set $\Gamma_{n,p}^{\tilde{L}}$. In this way, we obtain in a respectively altered version of Theorem \ref{thm:interpolation problem} the polynomials
\begin{equation*}
 \tilde{L}_{\Aa}(x,y) = w_\Aa \left( K_{n,p}(x,y; x_\Aa,y_\Aa) + \frac12 \hat{T}_{n+p}(x) \hat{T}_{n+p}(x_\Aa) \right), \quad \Aa \in \Lub,
\end{equation*}
as fundamental Lagrange polynomials in the space $\Pi_{n,p}^{2,\tilde{L}} := \spann\{T_{i}(x) T_j(y):\; (i,j) \in \Gamma_{n,p}^{\tilde{L}} \}$. In particular, we can see that it is not possible to speak of a "natural" polynomial space for interpolation on the point set $\Lub$. For the interpolation problem \eqref{eq:interpolationproblem}, several reasonable choices are possible. 
\end{remark}

\section{Convergence results for the interpolation polynomials}

In this section, we study the convergence behavior of the interpolating polynomial $\Lagpol$ to a given function $f \in C([-1,1]^2)$ if $n$ gets large. In particular, for $1 \leq r < \infty$ we prove mean convergence of the Lagrange interpolation in the $L^r$-norms 
\[\|f\|_r^r := \frac1{\pi^2} \int_{-1}^1 \int_{-1}^1  \frac{|f(x,y)|^r}{\sqrt{1-x^2}\sqrt{1-y^2}} \Dx{x} \Dx{y}.\] Further, we will give an upper bound for the growth of the Lebesgue constant.

The proof of the mean convergence as well as the estimate for the Lebesgue constant depend on the following forward quadrature sum estimate. In the upcoming results, we want to keep the parameter $p$ as variable as possible. For this reason, we will take particular care that the constants in the estimates are independent of $p$.

\begin{lemma} \label{lem:forwardquadestimate}
For $1 \leq r < \infty$, the inequality
\begin{equation} \label{eq:forwardquadestimate}
\sum_{\Aa \in \Lub} w_\Aa |P(\Aa)|^r \leq C_r \frac{1}{\pi^2} \int_{-1}^1 \int_{-1}^1  |P(x,y)|^r \frac{1}{\sqrt{1-x^2}} \frac{1}{\sqrt{1-y^2}} \Dx{x} \Dx{y}
\end{equation}
holds for all polynomials $P \in \PiL$ with the constant
$C_r =(r+1)^2 e^2\left(1 + \frac{1}{4\pi} \right)^2$.
\end{lemma}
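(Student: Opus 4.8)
The plan is to reduce the bivariate quadrature-sum estimate to a one-dimensional statement about trigonometric polynomials via the isometric isomorphism $\Emb$, and then to invoke a Marcinkiewicz--Zygmund-type inequality for the composite trapezoidal rule on equidistant nodes in $[0,\pi]$. Given $P \in \PiL$, set $q := \Emb P \in \Tripi \subset \Tripibasica$. By the definition of $\Tripi$ and the weights $w_\Aa$, the left-hand side of \eqref{eq:forwardquadestimate} is exactly the composite trapezoidal sum
\[
\sum_{\Aa \in \Lub} w_\Aa |P(\Aa)|^r = \frac{1}{n(n+p)}\left( \tfrac12 |q(t_0)|^r + \sum_{k=1}^{n(n+p)-1} |q(t_k)|^r + \tfrac12 |q(t_\pi)|^r\right),
\]
since each interior self-intersection point is counted by its two representatives $k \in [\Aa]$, each carrying weight $\frac{1}{n(n+p)}$, while the two vertices carry the half-weight $\frac{1}{2n(n+p)}$. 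Simultaneously, by Lemma \ref{lem:int2Dto1D} applied to $|P|^r$ (which, after expressing things through Chebyshev polynomials, reduces to showing $|P|^r$ is orthogonal to all $T_{k(n+p)}(x)T_{kn}(y)$ in the relevant degree range, or more robustly by using that $\Emb$ is an isometry and a density/approximation argument on $|q|^r$), the right-hand side equals $\frac{C_r}{\pi}\int_0^\pi |q(t)|^r\,\Dx t$. Thus it suffices to prove the one-dimensional inequality
\[
\frac{1}{n(n+p)}\left( \tfrac12 |q(0)|^r + \sum_{k=1}^{n(n+p)-1} |q(t_k)|^r + \tfrac12 |q(\pi)|^r\right) \le C_r\,\frac1{\pi}\int_0^\pi |q(t)|^r\,\Dx t
\]
for every $q \in \Tripibasica$, i.e. for every even trigonometric polynomial of degree at most $n(n+p)$.

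The one-dimensional estimate is a forward Marcinkiewicz--Zygmund inequality. The strategy here is the classical one: fix an index $k$ and an interval $I_k$ of length $\frac{\pi}{n(n+p)}$ containing $t_k$; on $I_k$ one controls the node value $|q(t_k)|$ by the local average of $|q|$ plus an oscillation term governed by $\|q'\|$ on $I_k$, and then the oscillation is absorbed using Bernstein's inequality $\|q'\|_{\infty,I} \le N \|q\|_{\infty}$ combined with a Nikolskii-type or direct pointwise bound. Concretely, for a trigonometric polynomial $q$ of degree $N = n(n+p)$ one has $|q(t_k)|^r \le \frac{N}{\pi}\int_{I_k}|q(t)|^r\,\Dx t \cdot (1 + \varepsilon)$ after a short computation using $|q(t_k)|^r - |q(t)|^r$ estimated through the derivative of $|q|^r$, which brings in a factor $r$; summing over $k$ telescopes the integrals back to $\int_0^\pi$, and the constant that emerges is $(r+1)^2 e^2 (1 + \frac{1}{4\pi})^2$ once the Bernstein factor and the endpoint half-weights are tracked carefully. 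The specific shape of the constant, with the $e^2$ and the $(1+\frac{1}{4\pi})^2$, suggests the authors use an exponential bound of the form $\left(1 + \frac{s}{m}\right)^m \le e^s$ applied twice — once in each variable, or once for the Bernstein step and once for a subdivision refinement — which is why the exponents are squared.

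The main obstacle I expect is twofold. First, one must verify that Lemma \ref{lem:int2Dto1D} (or an analogue) genuinely applies to $|P|^r$: for non-integer $r$ this is not a polynomial, so the clean orthogonality hypothesis $\langle |P|^r, T_{k(n+p)}(x)T_{kn}(y)\rangle = 0$ fails and one instead has to argue entirely on the trigonometric side — using that $\Emb$ preserves $L^2$ inner products on $\PiL$, that $|q|^r = |\Emb P|^r$ as a function on $[0,\pi]$, and that $\frac1\pi\int_0^\pi|q|^r$ is precisely the $L^r$-norm pulled back along the measure-preserving map $x = \cos(n t)$ composed appropriately. In fact the cleanest route is: the map $t \mapsto \Liscurve(t)$ pushes forward $\frac1\pi\,\Dx t$ on $[0,\pi]$ to the product arcsine measure on $[-1,1]^2$ only up to the doubling/self-intersection combinatorics, so one must check that the pull-back of the product Chebyshev weight integral of $|P|^r$ equals $\frac1\pi\int_0^\pi |\Emb P|^r\,\Dx t$ exactly — and this requires $|P|^r$ restricted to $\Liscurve$ to have a trigonometric expansion whose frequencies avoid the "aliasing" frequencies $\{k n(n+p)\}$, which in turn needs $\deg$-type control. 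Since $P \in \PiL$, $\Emb P$ has degree $\le n(n+p)$, but $|\Emb P|^r$ for non-integer $r$ is not trigonometric at all, so the honest argument must approximate $|q|^r$ from below by nonnegative trigonometric polynomials (Fejér--Riesz / Lukács) and pass to a limit, being careful that the approximation does not inflate the constant. Second, and more delicate for the stated uniformity in $p$, one must ensure the Bernstein/Marcinkiewicz constants do not secretly depend on the ratio $\frac{n+p}{n}$; this is handled by working purely with $N = n(n+p)$ as the degree bound and the $N+1$ equidistant nodes $t_k$, never referring to the separate frequencies $n$ and $n+p$ in the estimate itself — the product structure of the Lissajous curve is irrelevant once we have passed to the trigonometric picture.
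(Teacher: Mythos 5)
Your reduction to the curve breaks down at its central step. You claim that the right-hand side of \eqref{eq:forwardquadestimate} equals $\frac{1}{\pi}\int_0^\pi |\Emb P(t)|^r\,\Dx{t}$ (up to the constant), invoking Lemma \ref{lem:int2Dto1D} for $|P|^r$. But Lemma \ref{lem:int2Dto1D} is an exactness (aliasing/orthogonality) identity for \emph{polynomials}, not a change of variables: the image of $[0,\pi]$ under $\Liscurve$ is a one-dimensional curve of planar measure zero, so $\frac1\pi\int_0^\pi |\Emb P|^r\,\Dx{t}$ is an integral of $|P|^r$ over the curve, while the right-hand side of \eqref{eq:forwardquadestimate} is an integral over the whole square; for a non-polynomial integrand such as $|P|^r$ with $r$ non-integer (or with $r$ large enough that the orthogonality hypothesis of Lemma \ref{lem:int2Dto1D} fails) there is no reason these agree, and in general they do not. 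Your proposed repair — approximating $|q|^r=|\Emb P|^r$ from below by nonnegative trigonometric polynomials via Fej\'er--Riesz — only manipulates the curve side; it produces no bivariate polynomial on $[-1,1]^2$ linking the approximant back to the two-dimensional integral of $|P|^r$. What your route actually requires is a restriction-type inequality $\frac1\pi\int_0^\pi |\Emb P(t)|^r\,\Dx{t} \leq C\, \|P\|_r^r$ for all $P\in\PiL$, with $C$ independent of $n$ and $p$; this is a genuinely nontrivial statement that you neither prove nor cite, so the argument has a gap precisely where the two-dimensional information enters. (Your rewriting of the left-hand side as a trapezoidal sum of $|q|^r$ at the $t_k$ is fine, and a Lubinsky--M\'at\'e--Nevai bound would indeed control that sum by the curve integral — but that only pushes the problem into the missing restriction estimate.)

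For comparison, the paper never passes to the curve in this proof. It uses the grid characterizations \eqref{def:Luebeckpointsb}--\eqref{def:Luebeckpointsr} of $\Lub$ as two rectangular Chebyshev-type grids and applies the univariate quadrature-sum inequality of Lubinsky--M\'at\'e--Nevai \eqref{eq:univariateineq} twice in tensor fashion: first in the $y$-variable to the even trigonometric polynomials $P(z_{2i}^{n+p},\cos t)$ and $P(z_{2i+1}^{n+p},\cos t)$ of degree $n$, then in the $x$-variable to $P(\cos s,\cos t)$ with $t$ fixed, using node spacings $2\pi/n$ and $2\pi/(n+p)$ respectively. Each application contributes a factor $(r+1)e\left(1+\frac{1}{4\pi}\right)$, which is exactly why the constant is squared, and the degrees and spacings involve $n$ and $n+p$ separately in a way that keeps the constant independent of $p$. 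If you want to salvage a curve-based proof, you would first have to establish the missing restriction inequality with a $p$-uniform constant, which is likely harder than the tensor-product argument itself.
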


\begin{proof}
Based on an idea given in \cite{Xu1996}, we use a univariate inequality \cite[Theorem 2]{LubinskyMateNevai1987} to estimate the quadrature sum. Applied to an even trigonometric polynomial $q(t) = \sum_{k=0}^n c_k \cos kt$,
this  inequality has the form
\begin{equation} \label{eq:univariateineq} \sum_{j = 1}^m |q(\theta_j)|^r \leq \left(n + \frac1{2 \eps}\right)  \frac{(r+1)e}{\pi} \int_0^\pi |q(t)|^r \Dx{t},\end{equation}
with $0 \leq \theta_0 < \theta_1 < \cdots < \theta_m \leq 2\pi$ and
$\eps = \min (\theta_1 - \theta_0, \ldots, \theta_m - \theta_{m-1}, 2 \pi - \theta_m + \theta_0) > 0$. 
To estimate the quadrature sum, we use the grid characterization of the set $\Lub$ given in \eqref{def:Luebeckpointsb} and \eqref{def:Luebeckpointsr}. To simplify the calculations,
we will only consider the case where $n$ is odd and $p$ is even, i.e. case (c) in Table \ref{tab:1}. 
The estimates for the cases (a) and (b) in Table \ref{tab:1} follow analogously. We obtain
\begin{align*}
\sum_{\Aa \in \Lub}\!\!\! w_\Aa |P(\Aa)|^r &= \frac{1}{2n(n+p)} \left( \sum_{i = 0}^{\frac{n+p-1}{2}}  \sum_{j = 0}^{\frac{n-1}{2}} 
(2-\delta_{i,0})(2-\delta_{j,0})|P(z_{2i}^{n+p},z_{2j}^{n})|^r 
\right. \\ &\hspace{0.8cm} \left. + \sum_{i = 0}^{\frac{n+p-1}{2}} \sum_{j = 0}^{\frac{n-1}{2}} (2-\delta_{2i+1,n+p})(2-\delta_{2j+1,n})|P(z_{2i+1}^{n+p},z_{2j+1}^{n})|^r \right).
\end{align*}
Now, for every fixed $0 \leq i \leq n+p-1$, we adopt inequality \eqref{eq:univariateineq} to the even trigonometric polynomials
$P(z_{2i}^{n+p},\cos t)$ and $P(z_{2i+1}^{n+p},\cos t)$ of degree $n$. 
We use inequality \eqref{eq:univariateineq} with $\eps = \frac{2\pi}{n}$ and the points $\theta_j = \frac{2 j}{n} \pi$, $j = 0, \ldots, n-1$ for the first summation, as well as
$\eps = \frac{2\pi}{n}$ and the points $\theta_j' = \frac{2 j + 1}{n} \pi$, $j = 0, \ldots, n-1$ for the second summation. We get
\begin{align*}
\sum_{\Aa \in \Lub} \!\!\! w_\Aa |P(\Aa)|^r &\leq \frac{(r+1)e \left(1 + \frac{1}{4\pi} \right)}{2(n+p)\pi} \left( \sum_{i = 0}^{\frac{n+p-1}{2}} \int_{0}^\pi 
(2-\delta_{i,0})|P(z_{2i}^{n+p},\cos t)|^r \Dx{t}
\right. \\ &\hspace{0.8cm} \left. +  \sum_{i = 0}^{\frac{n+p-1}{2}} \int_{0}^\pi (2-\delta_{2i+1,n+p})|P(z_{2i+1}^{n+p},\cos t)|^r \Dx{t} \right).
\end{align*}
Now, on the right hand side, we change the sums with the integrals and apply inequality \eqref{eq:univariateineq} again for the polynomials $P(\cos s, \cos t)$. 
This time, the variable $t$ is fixed and we use $\eps = \frac{2\pi}{n+p}$ with the points $\theta_i = \frac{2 i}{n+p} \pi$ and 
$\theta_i' = \frac{2 i + 1}{n+p} \pi$, $i = 0, \ldots, n+p-1$. In this way, we obtain
\begin{align*}
\sum_{\Aa \in \Lub} \!\!\! w_\Aa |P(\Aa)|^r &\leq (r+1)^2 e^2 \left(1 + \frac{1}{4\pi} \right)^2 \left( \frac1{\pi^2}\int_{0}^{\pi} \int_{0}^\pi 
|P(\cos s,\cos t)|^r \Dx{s} \Dx{t}
\right) \\ &= 
(r+1)^2 e^2 \left(1 + \frac{1}{4\pi} \right)^2  \underbrace{\frac1{\pi^2}\int_{-1}^1 \int_{-1}^1  \frac{|P(x,y)|^r}{\sqrt{1-x^2}\sqrt{1-y^2}} \Dx{x} \Dx{y}}_{= \|P\|_r^r}.
\end{align*} \qed
\end{proof}

As a second auxiliary result, we show the inverse quadrature sum estimate: 

\begin{lemma} \label{lem:inversequadestimate}
For $1 < r < \infty$, the inequality
\begin{equation} \label{eq:inversequadestimate}
\frac{1}{\pi^2} \int_{-1}^1 \int_{-1}^1  |P(x,y)|^r \frac{1}{\sqrt{1-x^2}} \frac{1}{\sqrt{1-y^2}} \Dx{x} \Dx{y} \leq D_r \sum_{\Aa \in \Lub} w_\Aa |P(\Aa)|^r
\end{equation}
holds for all $P \in \PiL$ with a constant
$D_r$ independent of $n$ and $p$. 
\end{lemma}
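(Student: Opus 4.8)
The inequality \eqref{eq:inversequadestimate} is the reverse Marcinkiewicz--Zygmund estimate to Lemma~\ref{lem:forwardquadestimate}, and the plan is to obtain it by duality: pair $P$ with a function $g$, replace $g$ by its orthogonal projection onto a polynomial space on which the quadrature rule of Theorem~\ref{thm:quadratureruleluebeck} is exact, apply a discrete H\"older inequality, and then invoke Lemma~\ref{lem:forwardquadestimate} with the conjugate exponent. Fix $1<r<\infty$ and let $r'$ be its conjugate. Since the quadrature rule is ``blind'' in the $\hat T_n$-direction, it is convenient to split $P\in\PiL$ along $\PiL=\Pibasic\oplus\Rr\,\hat T_n(y)$, writing $P=P_0+a\,\hat T_n(y)$ with $P_0\in\Pibasic$ and $a=\langle P,\hat T_n(y)\rangle$. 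The coefficient $a$ is easy to control: since $P$ equals its own Lagrange interpolant, formula \eqref{eq:coeffinterpolation} with $f_\Aa=P(\Aa)$ gives $a=\tfrac12\sum_{\Aa\in\Lub}w_\Aa P(\Aa)\hat T_n(y_\Aa)$, and because $|\hat T_n(y_\Aa)|=\sqrt2$, $\sum_{\Aa}w_\Aa=1$ and $\|\hat T_n(y)\|_r\le\sqrt2$, H\"older's inequality yields
\[
\|a\,\hat T_n(y)\|_r\le|a|\,\sqrt2\le\Big(\sum_{\Aa\in\Lub}w_\Aa|P(\Aa)|^r\Big)^{1/r}.
\]
Since moreover $P_0(\Aa)=P(\Aa)-a\,\hat T_n(y_\Aa)$, the same bound on $a$ gives $\sum_\Aa w_\Aa|P_0(\Aa)|^r\le 2^r\sum_\Aa w_\Aa|P(\Aa)|^r$, so it suffices to bound $\|P_0\|_r$ by the quadrature sum of $P_0$.

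For this, let $\mathcal P$ denote the $\langle\cdot,\cdot\rangle$-orthogonal projection onto $\Pibasic$ and use $\|P_0\|_r=\sup\{\langle P_0,g\rangle:\|g\|_{r'}\le1\}$. As $\mathcal P$ is self-adjoint and reproduces $\Pibasic$, one has $\langle P_0,g\rangle=\langle P_0,\mathcal P g\rangle$; since $P_0\,\mathcal P g\in\Pibasic\cdot\Pibasic\subset\Pi_{2n,2p}^2$, Theorem~\ref{thm:quadratureruleluebeck} gives $\langle P_0,\mathcal P g\rangle=\sum_{\Aa\in\Lub}w_\Aa P_0(\Aa)(\mathcal P g)(\Aa)$. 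Applying the discrete H\"older inequality with exponents $r,r'$ for the weights $(w_\Aa)$, then Lemma~\ref{lem:forwardquadestimate} with exponent $r'$ to $\mathcal P g\in\Pibasic\subset\PiL$, and finally $\|\mathcal P g\|_{r'}\le\|\mathcal P\|_{L^{r'}\to L^{r'}}\|g\|_{r'}$, we obtain
\[
\langle P_0,\mathcal P g\rangle\le\Big(\sum_{\Aa\in\Lub}w_\Aa|P_0(\Aa)|^r\Big)^{1/r}\,C_{r'}^{1/r'}\,\|\mathcal P\|_{L^{r'}\to L^{r'}}.
\]
Taking the supremum over $g$, combining with the bound for $a$, and using $\sum_\Aa w_\Aa|P_0(\Aa)|^r\le2^r\sum_\Aa w_\Aa|P(\Aa)|^r$ yields \eqref{eq:inversequadestimate} with $D_r=\big(2\,C_{r'}^{1/r'}\|\mathcal P\|_{L^{r'}\to L^{r'}}+1\big)^r$.

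The heart of the matter, and the step I expect to be the main obstacle, is therefore to bound $\|\mathcal P\|_{L^{r'}\to L^{r'}}$ \emph{uniformly in $n$ and $p$}. Under the substitution $x=\cos s$, $y=\cos t$ the weighted space $L^{r'}$ becomes $L^{r'}$ of the even functions on $[0,\pi]^2$, $T_i(x)T_j(y)$ becomes $\cos(is)\cos(jt)$, and $\mathcal P$ becomes the partial Fourier sum cutting the frequencies $(i,j)$ off at the triangle $\{\,i/(n+p)+j/n<1\,\}$; passing to the even extension on $\Tt^2$, this is the restriction of the $\Zz^2$-frequencies to the symmetric rhombus $R_{n,p}=\{(\xi,\eta):|\xi|/(n+p)+|\eta|/n<1\}$, a harmless passage since $R_{n,p}$ is invariant under the reflections. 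By transference and the dilation invariance of Fourier-multiplier norms on $\Rr^2$, the norm of this restriction is controlled by the $\Rr^2$-multiplier norm of $\mathbf{1}_{R_{n,p}}$, which depends only on the \emph{shape} of $R_{n,p}$; writing $R_{n,p}$ as the intersection of the four half-planes $\pm\xi/(n+p)\pm\eta/n<1$, this norm is at most the product of the four corresponding half-plane multiplier norms, each of which is, up to a modulation, a directional Hilbert transform whose $L^{r'}$-norm equals the norm of the one-dimensional Hilbert transform and in particular does not depend on the direction, hence not on the ratio $p/n$. This bounds $\|\mathcal P\|_{L^{r'}\to L^{r'}}$ by a constant depending only on $r$, which completes the proof. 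The hypothesis $1<r<\infty$ is used precisely here: the Hilbert transform is unbounded on $L^1$ and on $L^\infty$, which is why the endpoint $r=1$ must be excluded.
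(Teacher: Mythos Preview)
Your argument is correct and follows essentially the same route as the paper's proof: duality, passage to a discrete sum via the exactness of the quadrature rule, the discrete H\"older inequality, the forward estimate of Lemma~\ref{lem:forwardquadestimate} with exponent $r'$, and finally the uniform $L^{r'}$-boundedness of the partial-sum operator over the rhombic index set. The only organizational difference is that the paper splits on the $g$-side, writing $\langle P,g\rangle=\langle P,S_{n,p}g\rangle+c_{0,n}\langle P,\hat T_n(y)\rangle$ and treating the two terms directly, whereas you split on the $P$-side via $P=P_0+a\,\hat T_n(y)$; this costs you the extra (easy) step of controlling $\sum_\Aa w_\Aa|P_0(\Aa)|^r$ by $\sum_\Aa w_\Aa|P(\Aa)|^r$, but is otherwise equivalent. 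For the crucial multiplier bound the paper simply cites Fefferman's polygon result, while you sketch the standard proof of that result via the decomposition of the rhombus into four half-planes and the direction-independence of the Hilbert transform norm; both arguments yield a constant depending only on $r'$ and not on the aspect ratio $(n+p)/n$, which is precisely what is needed.
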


\begin{proof}
We use a duality argument as described in \cite{Xu1996} (and more generally in \cite{Lubinsky1998}) to show the inverse inequality. For $1 < r < \infty$ and the dual parameter $r' := \frac{r}{r-1}$, we have the representation
\[ \| P \|_r = \sup_{g \in L^{r'}: \|g\|_{r'} = 1} \langle P, g \rangle. \]
Further, if we introduce the partial sums 
\begin{align} S_{n,p} g(x,y) &:= \sum_{(i,j) \in \Lubindbasic} c_{i,j} \hat{T}_i(x) \hat{T}_j(y), \notag\\
S_{n,p}^L g(x,y) &:= \sum_{(i,j) \in \Lubind} c_{i,j} \hat{T}_i(x) \hat{T}_j(y)  \quad c_{i,j} = \langle g, \hat{T}_i(x) \hat{T}_j(y) \rangle, \label{eq:partialsumLub}
\end{align}
we obtain for polynomials $P \in \PiL$:
\begin{align*} 
\| P \|_r &= \sup_{g \in L^{r'}: \|g\|_{r'} = 1} \langle P, S_{n,p}^L g \rangle \\ &\leq \sup_{g \in L^{r'}: \|g\|_{r'} = 1} \langle P, S_{n,p} g \rangle + \sup_{g \in L^{r'}: \|g\|_{r'} = 1} c_{0,n} \langle P, \hat{T}_{n}(y) \rangle.
\end{align*}
Now, we use the fact that $\mathcal{L}_{n,p} P = P$ holds for all polynomials $P \in \PiL$ and the representation \eqref{eq:coeffinterpolation} of $\mathcal{L}_{n,p} P$ in the Chebyshev basis. In this way, 
we obtain by the orthonormality of the basis $\hat{T}_i(x) \hat{T}_j(y)$:
\begin{align*}
\| P \|_r &\leq  \sup_{\|g\|_{r'} = 1} \!\!\! \sum_{\quad \Aa \in \Lub}\!\!\! w_\Aa P(\Aa) \, S_{n,p} g(\Aa) + \sup_{ \|g\|_{r'} = 1} \frac{c_{0,n}}{2}   \!\!\! \sum_{\;\;\;\Aa \in \Lub} \!\!\! w_\Aa P(\Aa) \, \hat{T}_{n}(y_\Aa).
\end{align*}
Applying H\"olders inequality for the dual pair $r,r'$ to both sums on the right hand side, we get
\begin{align*}
\| P \|_r &\leq  \left( \sum_{\Aa \in \Lub} \!\!\!  w_\Aa |P(\Aa)|^r \right)^{\frac1r} \sup_{ \|g\|_{r'} = 1} \left( \sum_{\Aa \in \Lub} \!\!\!  w_\Aa |S_{n,p}g(\Aa)|^{r'} \right)^{\frac1{r'}} \\ & \quad + \left( \sum_{\Aa \in \Lub} \!\!\!\!\!\!  w_\Aa |P(\Aa)|^r \right)^{\frac1r} \sup_{ \|g\|_{r'} = 1} \frac{c_{0,n}}{2} \left( \sum_{\Aa \in \Lub} \!\!\!  w_\Aa |\hat{T}_{n}(y_\Aa)|^{r'} \right)^{\frac1{r'}}.
\end{align*}
Now, for the first term in the above inequality, we use the forward quadrature sum estimate proven in Lemma \ref{lem:forwardquadestimate}. For the second term, we use the fact that $\hat{T}_n(y) \leq \sqrt{2}$ and, in a second step, again the H\"older inequality for $\langle g, \hat{T}_n(y) \rangle$. Then, we obtain
\begin{align*}
\| P \|_r &\leq  \left( \sum_{\Aa \in \Lub} \!\!\!  w_\Aa |P(\Aa)|^r \right)^{\frac1r} \left( C_{r'}^{\frac1{r'}} \sup_{ \|g\|_{r'} = 1} \| S_{n,p} g\|_{r'} + \sup_{ \|g\|_{r'} = 1} \frac{\langle g, \hat{T}_n(y) \rangle }{\sqrt{2}}\right) \\
& \leq \left( \sum_{\Aa \in \Lub} \!\!\!  w_\Aa |P(\Aa)|^r \right)^{\frac1r} \left(C_{r'}^{\frac1{r'}} \| S_{n,p}\|_{L^{r'} \to L^{r'}} + 1 \right).
\end{align*}
Therefore, if we know that the partial sum operators $S_{n,p}$ are uniformly bounded in the $L_r$ norm, the proof is finished. To see this, we write $S_{n,p} g $ as trigonometric partial sum
\begin{align} S_{n,p} g(\cos \alpha, \cos \beta) &= 
\sum_{(i,j) \in \Gamma_{n,p}^{\mathrm{trig}}}\!\!\! a_{i,j}\, e^{\mathfrak{i}(\alpha i + \beta j)}, \notag \\ \Gamma_{n,p}^{\mathrm{trig}} &:= \left\{(i,j) \in \Zz^2: \, \frac{|i|}{n+p}+\frac{|j|}{n} < 1 \right\}. \label{eq:rhomboid}
\end{align}
The index set $\Gamma_{n,p}^{\mathrm{trig}}$ is the intersection of $\Zz^2$ with a rhombus consisting of four congruent right triangles with side lengths $n+p$ and $n$. Now, we can adopt a classical result of Fefferman~\cite{Fefferman1971}. It states that for $1 < r' < \infty$ the trigonometric partial sum over 
the rhombic index set $\Gamma_{n,p}^{\mathrm{trig}}$ is uniformly bounded in the $L^{r'}$-norm on the $2$-torus, i.e.
\[ \|S_{n,p} g\|_{r'} = \left(\frac{1}{4 \pi^2} \int_{[0,2\pi)^2} \Big|\sum_{(i,j) \in \Gamma_{n,p}^{\mathrm{trig}}}\!\!\! a_{i,j}\, e^{\mathfrak{i}(\alpha i + \beta j)}
\Big|^{r'} \Dx{\alpha}\Dx{\beta}\right)^{\frac1{r'}} \leq C_{r'}^F \|g\|_{r'}, \]
holds for all $n,p \in \Nn$ and the constant $C_{r'}^F$ does not depend on $n$ and $p$. This proves the statement of the Lemma with $D_r = (C_{r'}^{\frac1{r'}} C_{r'}^F +1)^r$. \qed
\end{proof}

\begin{remark}
The combination of forward \eqref{eq:forwardquadestimate} and inverse quadrature sum estimate \eqref{eq:inversequadestimate} is also referred to as Marcinkiewicz-Zygmund inequality, see \cite{Lubinsky1998,MhaskarPrestin1998} and the references therein. The idea for the proofs of Lemma \ref{lem:forwardquadestimate} and Lemma \ref{lem:inversequadestimate} is taken from \cite{Xu1996} where similar results are shown for the Xu points. 
The detailed elaboration of the proofs of the inequalities
\eqref{eq:forwardquadestimate} and \eqref{eq:inversequadestimate} was necessary in order to guarantee that the constants in \eqref{eq:forwardquadestimate} and \eqref{eq:inversequadestimate} do not depend on the parameter $p$.  
\end{remark}

\begin{theorem} \label{thm-meanconvergence}
Let $1 \leq r < \infty$, $f \in C([-1,1]^2)$ and $(p_n)_{n \in \Nn}$ a sequence of natural numbers such that $n$ and $n+p_n$ are relatively prime for all $n \in \Nn$. Then, the Lagrange interpolant $\mathcal{L}_{n,p_n} f $ converges for $n \to \infty$ to the function $f$ in the $L^r$-norm, i.e.
\[ \lim_{n \to \infty} \| \mathcal{L}_{n,p_n} f  - f\|_r = 0. \] 
\end{theorem}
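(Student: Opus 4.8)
The plan is to follow the standard route for proving mean convergence of an interpolation operator: combine a uniform operator bound in the $L^r$-norm with a density/best-approximation argument. First I would establish that the Lagrange operators $\mathcal{L}_{n,p_n}$ are uniformly bounded as operators from $C([-1,1]^2)$ (with the sup-norm) into $L^r$. To see this, take $f \in C([-1,1]^2)$ and write $\|\mathcal{L}_{n,p_n} f\|_r^r = \|\mathcal{L}_{n,p_n} f\|_r^r$; since $\mathcal{L}_{n,p_n} f \in \PiL$, the inverse quadrature sum estimate of Lemma~\ref{lem:inversequadestimate} (for $r > 1$) bounds $\|\mathcal{L}_{n,p_n} f\|_r^r \leq D_r \sum_{\Aa \in \Lub} w_\Aa |\mathcal{L}_{n,p_n} f(\Aa)|^r = D_r \sum_{\Aa \in \Lub} w_\Aa |f_\Aa|^r$ because the interpolant reproduces the data at the nodes. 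Since $\sum_{\Aa \in \Lub} w_\Aa = 1$ (the quadrature rule of Theorem~\ref{thm:quadratureruleluebeck} applied to $P \equiv 1$), this last sum is at most $\|f\|_\infty^r$, giving $\|\mathcal{L}_{n,p_n} f\|_r \leq D_r^{1/r} \|f\|_\infty$ uniformly in $n$. The case $r = 1$ follows from the case $r = 2$ (or any $r > 1$) together with the inequality $\|g\|_1 \leq \|g\|_r$ coming from the probability measure.

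Next I would invoke a Jackson-type approximation result: for $f \in C([-1,1]^2)$ there exist polynomials $Q_n \in \Pi_n^2$ with $\|f - Q_n\|_\infty \to 0$ as $n \to \infty$ (Weierstrass approximation on the square, realized in the Chebyshev basis). The key compatibility point is that for $n$ large enough the polynomial $Q_m$ of a fixed moderate degree $m$ lies in $\PiL = \Pi_{n,p_n}^{2,L}$; indeed, the index set $\Gamma_{n,p_n}^L$ contains all $(i,j)$ with $\frac{i}{n+p_n} + \frac{j}{n} < 1$, and for any fixed $m$ every pair with $i + j \leq m$ satisfies this once $n > m$ (since then $\frac{i}{n+p_n} + \frac{j}{n} \leq \frac{i+j}{n} \leq \frac{m}{n} < 1$). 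Hence $\mathcal{L}_{n,p_n} Q_m = Q_m$ for all sufficiently large $n$.

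With these two ingredients the standard splitting finishes the argument. Fix $\eps > 0$, choose $m$ with $\|f - Q_m\|_\infty < \eps$, and then for all $n > m$ write
\[
\|\mathcal{L}_{n,p_n} f - f\|_r \leq \|\mathcal{L}_{n,p_n}(f - Q_m)\|_r + \|Q_m - f\|_r \leq D_r^{1/r} \|f - Q_m\|_\infty + \|f - Q_m\|_\infty \leq (D_r^{1/r} + 1)\eps,
\]
using the uniform operator bound on the first term, $\mathcal{L}_{n,p_n} Q_m = Q_m$, and $\|\cdot\|_r \leq \|\cdot\|_\infty$ (again because the underlying measure is a probability measure) on the second. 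Letting $\eps \to 0$ gives $\lim_{n\to\infty}\|\mathcal{L}_{n,p_n} f - f\|_r = 0$.

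I expect the only genuine subtlety to be bookkeeping around the polynomial spaces: one must be careful that $\PiL$ is not a space of fixed dimension but grows with $n$, and that its defining index set really does eventually absorb any fixed-degree polynomial; the argument above via $\frac{i+j}{n} < 1$ handles this cleanly and, crucially, uniformly in the auxiliary parameter $p_n$, which is the reason the $p$-independence of the constant $D_r$ in Lemma~\ref{lem:inversequadestimate} was emphasized. Everything else is a routine application of the Marcinkiewicz--Zygmund machinery already developed in Lemmas~\ref{lem:forwardquadestimate} and~\ref{lem:inversequadestimate}.
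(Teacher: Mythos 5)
Your proposal is correct and follows essentially the same route as the paper: the inverse Marcinkiewicz--Zygmund estimate of Lemma~\ref{lem:inversequadestimate} gives the uniform bound $\|\mathcal{L}_{n,p_n}f\|_r \leq D_r^{1/r}\|f\|_\infty$, and then density of polynomials together with the reproduction $\mathcal{L}_{n,p_n}P = P$ for fixed-degree $P$ and large $n$ finishes the argument, with the $r=1$ case reduced to $r=2$ exactly as in the paper. Your explicit verification that a fixed-degree polynomial eventually lies in $\Pi_{n,p_n}^{2,L}$ uniformly in $p_n$, and that $\sum_{\Aa} w_\Aa = 1$, only spells out details the paper leaves implicit.
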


\begin{proof}
Following the argumentation scheme described in \cite{Lubinsky1998}, we adopt the inverse quadrature sum estimate of Lemma \ref{lem:inversequadestimate} to the polynomial $\mathcal{L}_{n,p_n} f \in \PiL$. In this way, we get for $1 < r < \infty$:
\[ \| \mathcal{L}_{n,p_n} f \|_r^r \leq D_r \sum_{\Aa \in \Gamma_{n,p_n}^L} w_\Aa |f(\Aa)|^r \leq D_r \|f\|_\infty^r. \] 
Now, for an arbitrary polynomial $P \in \Pin$, we have the identity $\mathcal{L}_{n,p_n} P = P$ and therefore
\[ \|\mathcal{L}_{n,p_n}f - f\|_r \leq \|\mathcal{L}_{n,p_n}(f - P)\|_r + \|P-f\|_r \leq (1+D_r)\|P-f\|_\infty.\] 
Since polynomials are dense in $C([-1,1]^2)$, we immediately get mean convergence of the Lagrange interpolant for $1 < r < \infty$. The convergence for $r = 1$ follows analogously using the estimate
\[ \|\mathcal{L}_{n,p_n} f \|_1 \leq \|\mathcal{L}_{n,p_n} f \|_2   \leq D_2 \|f\|_\infty. \] \qed
\end{proof}

Finally, we consider the Lebesgue constant related to the interpolation problem \eqref{eq:interpolationproblem}. It is given as the operator norm of the interpolation operator $\mathcal{L}_{n,p}$ in the space $(C([-1,1]^2), \| \cdot \|_\infty)$:
\[ \Lambda_{n,p} := \max_{(x,y) \in [-1,1]^2} \sum_{\Aa \in \Lub} |L_{\Aa}(x,y)|.\]
For $p = 1$, it is known that $\Lambda_{n,1}$ grows as $\mathcal{O}(\ln^2 n)$~\cite{BosDeMarchiVianelloXu2006,VecchiaMastroianniVertesi2009}. The next theorem states that a similar behavior is true for general $p$. 

\begin{theorem} \label{thm:lebesgueconstant}
The Lebesgue constant $\Lambda_{n,p} $ is bounded by
\[ D_{\Lambda} \ln^2(n) \leq \Lambda_{n,p} \leq C_{\Lambda} \ln^2(n+p).\] 
The constants $C_{\Lambda}$ and $D_{\Lambda}$ do not depend on $n$ and $p$. 
\end{theorem}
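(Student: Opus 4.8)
The proof splits naturally into the upper and lower bounds, and in both cases the strategy is to transfer the problem to the trigonometric setting via the isometric isomorphism $\Emb$ from Theorem \ref{Thm-isomorphism}, where classical one-dimensional estimates for the Lebesgue function of equidistant points are available. For the upper bound, the key observation is the explicit formula \eqref{eq:Lagrangebasispolynomialstrigonometric}: each $l_\Aa$ is a sum of at most two translated Dirichlet-kernel Lagrange polynomials $D_{n(n+p)}^k$. Since $L_\Aa = \Emb^{-1} l_\Aa$ and $\Emb$ preserves point evaluations along the curve, for any $(x,y) = \Liscurve(t)$ on the trajectory one has $\sum_{\Aa} |L_\Aa(x,y)| = \sum_\Aa |l_\Aa(t)| \le \sum_{k=0}^{n(n+p)} |D_{n(n+p)}^k(t)|$, which is precisely the Lebesgue function for trigonometric interpolation at $n(n+p)+1$ equidistant Chebyshev--Gauss--Lobatto-type nodes; this is classically $\Ord(\ln(n(n+p))) = \Ord(\ln(n+p))$. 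The real work is that $(x,y)$ ranges over all of $[-1,1]^2$, not just over $\Liscurve$. I would handle the off-curve points by writing $L_\Aa$ explicitly through \eqref{eq:fundamentalpolynomialsofLagrangeinterpolation} as $w_\Aa$ times a reproducing-kernel expression, and then estimating $\sum_\Aa |L_\Aa(x,y)|$ directly: using the product structure of $K_{n,p}$ and $\hat T_i(x)\hat T_j(y)$, the sum factors (up to the grid decomposition $\Lubblack\cup\Lubwhite$ and the correction term $\tfrac12\hat T_n(y)\hat T_n(y_\Aa)$) into a product of two univariate Dirichlet-type sums, one in the $x$-variable over the $z_i^{n+p}$ nodes and one in the $y$-variable over the $z_j^n$ nodes. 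Each univariate factor is a Lebesgue function for Gauss--Lobatto interpolation and contributes a factor $\Ord(\ln(n+p))$ and $\Ord(\ln n)$ respectively; the extra $\tfrac12\hat T_n(y)\hat T_n(y_\Aa)$ term is uniformly bounded and contributes nothing to the asymptotics. Multiplying gives $\Lambda_{n,p}\le C_\Lambda \ln^2(n+p)$ with $C_\Lambda$ independent of $n$ and $p$ — the $p$-independence being exactly why the univariate estimates must be stated with explicit constants, as was done in Lemma \ref{lem:forwardquadestimate}.

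\textbf{Lower bound.} For $\Lambda_{n,p}\ge D_\Lambda\ln^2 n$ I would exploit that the node set $\Lub$ contains, along the line $x = 1$ (or $x = z_0^{n+p}$), a full set of Chebyshev--Gauss--Lobatto points $z_j^n$ in the $y$-variable (those $\Aa\in\Lubout$ with $i=0$), and similarly a full Gauss--Lobatto grid in $x$ along a coordinate line. Restricting the interpolation operator to functions depending only on one variable, $\mathcal{L}_{n,p}$ dominates a univariate Lagrange interpolation operator at $n+1$ Gauss--Lobatto points, whose Lebesgue constant is known to grow like $\ln n$. To get the square, I would use a tensor-type lower bound: pick a test point $(x^*,y^*)$ near a corner and a function that forces simultaneous near-worst-case behavior in both variables, so that the Lebesgue function at $(x^*,y^*)$ is bounded below by a product of two univariate Lebesgue functions, each of order $\ln n$. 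Alternatively, and perhaps more cleanly, transfer again via $\Emb$: the trigonometric Lebesgue function $\sum_k |D_{n(n+p)}^k(t)|$ evaluated at a point $t$ close to but not equal to a node is bounded below by $c\ln(n(n+p))$, but since we need $\ln^2 n$ rather than $\ln(n(n+p))$ this does not immediately suffice — so the two-variable product lower bound seems unavoidable.

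\textbf{Main obstacle.} The delicate point is the upper bound off the curve: showing that $\sum_{\Aa\in\Lub}|L_\Aa(x,y)|$ genuinely factors into a product of two univariate Lebesgue functions requires care because $\Lub$ is not a full tensor grid but the union of the two sub-grids $\Lubblack$ and $\Lubwhite$ of opposite parity, and the index set $\Lubind$ is the "triangular" rhombus \eqref{eq:rhomboid} rather than a box. One must re-sum the reproducing kernel over the triangular index set — this is where the analogue of the classical Padua-point computation (reducing $K_{n,p}$ to a combination of Dirichlet kernels via Christoffel--Darboux-type summation in each variable) is needed, and it is exactly the step that is "technically different" from the non-degenerate case mentioned in the introduction. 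I expect the cleanest route is to follow the argument used for the Padua points in \cite{BosDeMarchiVianelloXu2006,VecchiaMastroianniVertesi2009}: express the Lebesgue function through the curve parametrization plus a bounded remainder, bound it on the curve by the one-dimensional equidistant Lebesgue constant $\Ord(\ln(n+p))$, and control the deviation between the on-curve and off-curve values by a second $\Ord(\ln n)$ factor coming from the $y$-direction interpolation, giving the product $\ln^2$ bound; keeping every constant explicitly $p$-free throughout is the bookkeeping burden.
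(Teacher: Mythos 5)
Your plan identifies the right order of magnitude, but both halves rest on steps that are asserted rather than available. For the upper bound, the claimed factorization of $\sum_{\Aa\in\Lub}|L_\Aa(x,y)|$ into a product of two univariate Lebesgue functions is exactly the point that does not go through: $\Lub$ is not a tensor grid, and $K_{n,p}$ is a sum over the rhombic index set $\Lubindbasic$, so no product structure and no compact Christoffel--Darboux-type formula is at hand for general $p$ (the explicit kernel formula you invoke from the Padua literature is special to $p=1$, and the paper explicitly notes that the $p=1$ proofs went through that formula). The paper's actual argument avoids any kernel re-summation: it writes $L_\Aa=w_\Aa(K_{n,p}(x,y;x_\Aa,y_\Aa)+\tfrac12\hat T_n(y)\hat T_n(y_\Aa))$, absorbs the second term into a $+1$, and then applies the forward quadrature-sum estimate of Lemma \ref{lem:forwardquadestimate} with $r=1$ to the polynomial $K_{n,p}(x,y;\cdot,\cdot)\in\PiL$, which converts the node sum into $C_1\max_{(x,y)}\|K_{n,p}(x,y;\cdot,\cdot)\|_1$; after the substitution $x=\cos\alpha$, $y=\cos\beta$ this is the $L^1$-norm of the Dirichlet kernel with rhombic summation set $\Gamma_{n,p}^{\mathrm{trig}}$, which is $\le C\ln^2(2n+2p)$ by the Yudin--Yudin result on polygonal Dirichlet kernels, with constant free of $n$ and $p$. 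Your proposal never uses Lemma \ref{lem:forwardquadestimate} and offers no substitute for this off-curve estimate; the on-curve bound via $\sum_k|D_{n(n+p)}^k(t)|$ only controls points on $\Liscurve$ and, as you note, gives a single logarithm, so it cannot be the backbone of the argument.

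The lower bound has the same kind of gap. Restricting to a coordinate line where $\Lub$ contains a full Gauss--Lobatto grid only produces $c\ln n$, and the ``tensor-type'' product lower bound you hope for (Lebesgue function at a corner bounded below by a product of two univariate Lebesgue functions) is stated without any mechanism; since the interpolation space is $\PiL$ and not a product space, such a factorization from below is not obvious and you give no construction of the required test function. The paper instead uses that $\Lagpol$ is a projection onto $\PiL$ and invokes the Szili--V\'ertesi comparison $\Lambda_{n,p}\ge\tfrac12\sup_{\|f\|_\infty\le1}\|S^L_{n,p}f\|_\infty$, then the Riesz representation theorem to reduce to $\tfrac12\|K^L_{n,p}(1,1;\cdot,\cdot)\|_1$, and finally Yudin's lower bound for Dirichlet kernels over convex index sets containing a ball of radius $n/\sqrt2$, which yields $D\ln^2(n/\sqrt2+1)\ge D_\Lambda\ln^2 n$ uniformly in $p$. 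Without either this projection-plus-Yudin route or a genuine proof of your product lower bound, the $\ln^2 n$ lower estimate is not established, so as written the proposal has essential gaps in both directions.
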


\begin{proof}
Using formula \eqref{eq:fundamentalpolynomialsofLagrangeinterpolation}, we get for the Lebesgue constant $\Lambda_{n,p}$:
\begin{align*} \Lambda_{n,p} &= \max_{(x,y) \in [-1,1]^2} \sum_{\Aa \in \Lub} w_\Aa \left| K_{n,p}(x,y; x_\Aa,y_\Aa) + \frac12 \hat{T}_{n}(y) \hat{T}_{n}(y_\Aa) \right| \\ & \leq
\max_{(x,y) \in [-1,1]^2} \sum_{\Aa \in \Lub} w_\Aa \left| K_{n,p}(x,y; x_\Aa,y_\Aa) \right| +1.
\end{align*}
Now, by Lemma \ref{lem:forwardquadestimate}, we get
\begin{align*} \Lambda_{n,p} & \leq C_1
\max_{(x,y) \in [-1,1]^2} \| K_{n,p}(x,y; \cdot, \cdot) \|_1 +1.
\end{align*}
With the coordinate transforms $x = \cos \alpha$ and $y = \cos \beta$ we transfer the above norm in a trigonometric setting on the $2$-torus and obtain
\begin{align*} \Lambda_{n,p} & \leq 
\max_{0 \leq \alpha , \beta < 2\pi} \frac{C_1}{4 \pi^2} 
\underset{[0,2\pi)^2}{\int} \left| \underset{(i,j) \in \Gamma_{n,p}^{\mathrm{trig}}}{\sum} e^{\mathfrak{i} ( i \alpha' +j \beta')}
\cos (i \alpha) \cos (j \beta)\right| 
\Dx{\alpha'} \Dx{\beta'}  +  1 \\
& = C_1 \frac{1}{4 \pi^2} \int_{[0,2\pi)^2} \left|\underset{(i,j) \in \Gamma_{n,p}^{\mathrm{trig}}}{\sum}  e^{\mathfrak{i} ( i t\alpha'+ j \beta')} \right|\Dx{\alpha'} \Dx{\beta'} + 1,
\end{align*}
where $\Gamma_{n,p}^{\mathrm{trig}}$ denotes the rhombic index set defined in \eqref{eq:rhomboid}. The integral on the right hand side 
corresponds to the $L^1$-norm of the Dirichlet kernel with respect to the rhombic summation area $\Gamma_{n,p}^{\mathrm{trig}}$. By a result of \cite{YudinYudin1985}, this norm is bounded by $C \ln^2(2n+2p)$ with a constant $C$ independent of $n$ and $p$. Thus, we get
\[\Lambda_{n,p} \leq C_1 C \ln^2(2n+2p) + 1 \leq (4 C_1 C+1) \ln^2(n+p)\]
and the upper estimate is proven for $C_\Lambda = 4 C_1 C + 1$.

For the lower estimate of the Lebesgue constant, we proceed similar as in a proof given for the Padua points ~\cite{VecchiaMastroianniVertesi2009}. Since $\Lagpol$ is a linear projection of $C([-1,1]^2)$ onto $\PiL$, we get by \cite[Theorem 2.3]{SziliVertesi2009154} the following estimate:
\[ \Lambda_{n,p} = \sup_{\|f\|_\infty \leq 1} \|\Lagpol\|_\infty
\geq \frac12 \sup_{\|f\|_\infty \leq 1} \| S_{n,p}^L f\|_\infty,\]
where $S_{n,p}^L f = \langle f, K_{n,p}^L(x,y; \cdot, \cdot)\rangle$ denotes the partial sum operator given in 
\eqref{eq:partialsumLub}. We note that in \cite{SziliVertesi2009154} this inequality is only proven for projections onto $\Pin = \Pi_{n,1}^{2,L}$. However, a straightforward modification of the proof in \cite{SziliVertesi2009154} yields the respective result for general $p$. As a consequence of the Riesz representation theorem (cf. \cite[IV.6.3]{DunfordSchwartz1}) we further obtain
\begin{align*} \Lambda_{n,p} & \geq \frac12 \max_{(x,y) \in [-1,1]^2} 
\left\| K_{n,p}^L(x,y;\cdot,\cdot) \right\|_1 \geq \frac12 \left\| K_{n,p}^L(1,1;\cdot,\cdot) \right\|_1.
\end{align*}
Using the coordinate transform $x = \cos \alpha$, $y = \cos \beta$, we transfer the above norm in the trigonometric setting
\begin{align*} \Lambda_{n,p} & \geq \frac12 \frac1{4\pi^2} \int_{[0,2\pi)^2} \left| \underset{(i,j) \in \Gamma_{n,p}^{L, \mathrm{trig}}}{\sum} e^{\mathfrak{i} ( i \alpha+ j \beta)} \right| \Dx{\alpha} \Dx{\beta}
\end{align*}
with the index set $\Gamma_{n,p}^{L, \mathrm{trig}} = \Gamma_{n,p}^{ \mathrm{trig}} \cup (0, \pm n)$. 
Thus, on the right hand side we have again the $L^1$-norm of a Dirichlet kernel based on a rhombic summation index $\Gamma_{n,p}^{L, \mathrm{trig}}$. Since a sphere with radius $\frac{n}{\sqrt{2}}$ fits into the rhombus with diagonal lengths $2n$ and $2n+p$ we can adopt a result of \cite{Yudin1979}. It states that the $L^1$-norm of the Dirichlet under investigation is then bounded from below by $ D \ln^2(\frac{n}{\sqrt{2}}+1)$ with a constant $D$ independent of $n$ and $p$. Therefore, we get for $D_\Lambda = \frac{D}{4}$:
\[  \Lambda_{n,p} \geq \frac{D}{2} \ln^2\left(\frac{n}{\sqrt{2}}+1\right) \geq \frac{D}{4} \ln^2(n) = D_\Lambda \ln^2(n).\] \qed
\end{proof}

For a function $f \in C([-1,1]^2)$, the best approximation $E_{n,p}(f)$ of $f$ in the polynomial space $\PiL$ is given as
\[ E_{n,p}(f) := \min_{P \in \PiL} \|f - P\|_\infty.\]
For $u,v \geq 0$, the modulus of continuity $\omega(f;u, v)$ is defined as (cf. \cite[section 3.4.1]{Timan1960})
\[\omega(f;u, v) := \sup_{\substack{|x_1-x_2| \leq u \\ x_1,x_2 \in [-1,1]}} \sup_{\substack{|y_1-y_2| \leq v \\ y_1,y_2 \in [-1,1]}} |f(x_1,y_1)-f(x_2,y_2)|. \]
Using these standard tools from constructive approximation theory together with the estimate of the Lebesgue constant in Theorem \ref{thm:lebesgueconstant}, we
obtain the following error estimates. 

\begin{corollary} \label{cor-dinilipschitz}
For any continuous function $f \in C([-1,1]^2)$, we have
\begin{equation} \label{eq:bestapproximation}
\|f - \Lagpol \|_\infty \leq (C_{\Lambda} \ln^2(n+p)+1) E_{n,p}(f).
\end{equation}
If $\frac{\partial^r f}{\partial x^r}, \frac{\partial^s f}{\partial y^s} \in C([-1,1]^2)$ for given $r, s \in \Nn_0$, we further have the estimate 
\begin{equation} \label{eq:Dini1} \|f - \Lagpol \|_\infty \leq  C 
\ln^2(n+p) \left( \frac{\omega\left( \frac{\partial^r f}{\partial x^r}; \frac{1}{n+p}, 0 \right)}{(n+p)^r}  
+ \frac{\omega \left(  \frac{\partial^s f}{\partial y^s}; 0,\frac{1}{n} \right)}{n^s}  \right) \end{equation}
with a constant $C$ independent of $n$ and $p$. 
\end{corollary}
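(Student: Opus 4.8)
The plan is to derive both estimates \eqref{eq:bestapproximation} and \eqref{eq:Dini1} from the Lebesgue constant bound in Theorem~\ref{thm:lebesgueconstant} by the classical Lebesgue inequality argument. For \eqref{eq:bestapproximation}, I would start from the fact that $\Lagpol$ reproduces every polynomial in $\PiL$, so for any $P \in \PiL$ we have $f - \Lagpol = (f-P) - \Lagpol(f-P)$; taking $\|\cdot\|_\infty$ and using $\|\Lagpol(f-P)\|_\infty \leq \Lambda_{n,p}\|f-P\|_\infty$ gives $\|f-\Lagpol\|_\infty \leq (1+\Lambda_{n,p})\|f-P\|_\infty$. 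Taking the infimum over $P \in \PiL$ and inserting the upper bound $\Lambda_{n,p} \leq C_\Lambda \ln^2(n+p)$ from Theorem~\ref{thm:lebesgueconstant} yields \eqref{eq:bestapproximation} immediately.

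For \eqref{eq:Dini1}, the remaining task is to bound the best approximation error $E_{n,p}(f)$ by the right-hand side. Here I would exploit that $\PiL$ contains the full tensor-type triangle $\Lubindbasic = \{(i,j): \frac{i}{n+p}+\frac{j}{n}<1\}$, which in particular contains the ``coordinate-wise'' index rectangle $\{(i,j): i \leq \lfloor (n+p)/2\rfloor,\ j \leq \lfloor n/2 \rfloor\}$ (or some such large sub-block), and hence all polynomials of the form $T_i(x)T_j(y)$ with $i$ up to order $n+p$ along $x$ and $j$ up to order $n$ along $y$ in an appropriate sense. Invoking a standard bivariate Jackson-type theorem from constructive approximation theory (for instance from \cite{Timan1960}, section~3.4.1) for functions with $\frac{\partial^r f}{\partial x^r}$ and $\frac{\partial^s f}{\partial y^s}$ continuous, one gets
\[ E_{n,p}(f) \leq C\left( \frac{\omega\!\left(\tfrac{\partial^r f}{\partial x^r};\tfrac{1}{n+p},0\right)}{(n+p)^r} + \frac{\omega\!\left(\tfrac{\partial^s f}{\partial y^s};0,\tfrac{1}{n}\right)}{n^s}\right), \]
with $C$ independent of $n,p$. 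Combining this with \eqref{eq:bestapproximation} and absorbing the additive $1$ into the logarithmic factor (valid for $n \geq 2$) gives \eqref{eq:Dini1}.

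The main obstacle, and the only point requiring genuine care, is justifying the Jackson-type bound on $E_{n,p}(f)$ by polynomials from $\PiL$ rather than from a full tensor-product or total-degree space: one must check that the rhomboidal index set $\Lubind$ is large enough in each coordinate direction, so that a tensor product of univariate near-best approximations of degree $\sim n+p$ in $x$ and $\sim n$ in $y$ (each of which lives in the appropriate univariate Chebyshev span) has its combined index set contained in $\Lubindbasic$. A clean way to handle this is to take the univariate best polynomial approximation $P_1(x)$ of degree $m_1$ to $f(\cdot,y)$ uniformly in $y$ and $P_2(y)$ of degree $m_2$, form $P_1 \oplus P_2$ (the Boolean sum), and note $f - P_1 - P_2 + $ (cross term) is controlled by the univariate Jackson constants; choosing $m_1$ a fixed fraction of $n+p$ and $m_2$ a fixed fraction of $n$ keeps all relevant indices inside $\Lubindbasic \subset \Lubind$ and only changes the constant $C$. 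The degenerate geometry of $\Lubind$ plays no adverse role here since it is still a ``fat'' triangle in both axes, and the logarithmic factor from the Lebesgue constant is exactly the expected loss.
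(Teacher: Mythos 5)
Your proposal is correct and follows essentially the same route as the paper: the standard Lebesgue inequality with the bound $\Lambda_{n,p}\leq C_\Lambda\ln^2(n+p)$ gives \eqref{eq:bestapproximation}, and then $E_{n,p}(f)$ is bounded by observing that $\PiL$ contains the rectangular space $\spann\{T_i(x)T_j(y):\,0\leq i<\tfrac{n+p}{2},\,0\leq j<\tfrac{n}{2}\}$ and invoking the multivariate Jackson inequality (the paper cites \cite[section 5.3.2]{Timan1960}), with semi-additivity of the modulus absorbing the factor-two change of arguments. Your Boolean-sum remark is only an optional elaboration of that Jackson step (and as phrased the "best approximation uniformly in $y$" is not itself a polynomial in $y$), but since the standard cited theorem suffices, this does not affect the argument.
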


\begin{proof}
We denote by $P^*$ the best approximating polynomial of $f$ in $\PiL$, i.e. $\|P^* - f \|_\infty = E_{n,p}(f)$. Since $\mathcal{L}_{n,p} P^* = P^*$, 
the estimate of the Lebesgue constant in Theorem \ref{thm:lebesgueconstant} leads to the following estimate:
\begin{align*} \|f - \Lagpol \|_\infty &\leq \|f - P^* \|_\infty + \| \mathcal{L}_{n,p} P^* - \Lagpol \|_\infty \\ & \leq (\Lambda_{n,p} + 1) \|f - P^* \|_\infty  
= (C_{\Lambda}  \ln^2(n+p)+1) E_{n,p}(f).
\end{align*}
Since the polynomial space $\Pi_R^2 = \spann \left\{T_i(x) T_j(y): \; 0 \leq i < \frac{n+p}{2}, 0 \leq j < \frac{n}{2} \right\}$ is contained in $\PiL$, we have 
$E_{n,p}(f) \leq \|f-Q^*\|_\infty$ where $Q^*$ denotes the best approximating polynomial in $\Pi_R^2$. 
Now, a multivariate version of Jackson's inequality (cf. \cite[section 5.3.2]{Timan1960}) gives
\[ \|f-Q^*\|_\infty \leq \textstyle \tilde{C} \left(  \frac{2^{r} \omega\left( \frac{\partial^r f}{\partial x^r}; \frac{2}{n+p}, 0 \right)}{(n+p)^r}  
+ \frac{2^{s} \omega \left(  \frac{\partial^s f}{\partial y^s}; 0,\frac{2}{n} \right)}{n^s}  \right) \leq \tilde{C} 2^{\max \{r,s\} +1} 
\left( \frac{\omega\left( \frac{\partial^r f}{\partial x^r}; \frac{1}{n+p}, 0 \right)}{(n+p)^r}  
+ \frac{\omega \left(  \frac{\partial^s f}{\partial y^s}; 0,\frac{1}{n} \right)}{n^s}  \right).\]
In the second inequality, we used the semi-additivity of the modulus $\omega(f;h_x, h_y)$. This inequality together with \eqref{eq:bestapproximation}
yields \eqref{eq:Dini1}.  \qed
\end{proof}

\begin{remark}
If the function $f \in C([-1,1]^2)$ satisfies the Dini-Lipschitz-type condition 
\[ \lim_{n \to \infty}\ln^2(n+p_n) \, \omega \left(f; \frac{1}{n+p_n}, \frac{1}{n}\right) = 0, \] 
inequality \eqref{eq:Dini1} guarantees the uniform convergence 
\[ \lim_{n \to \infty} \| \mathcal{L}_{n,p_n} f - f \|_\infty = 0. \]
For the Padua points, the result of Corollary \ref{cor-dinilipschitz} can be found in \cite{CaliariDeMarchiVianello2008}. The upper estimate for the Lebesgue constant of the Padua points proven in \cite{BosDeMarchiVianelloXu2006} 
is more accurate compared to the estimate in Theorem \ref{thm:lebesgueconstant}. 
Further, more recent versions of Jackson's inequality in a general multidimensional setting can be found in \cite{BagbyBosLevenberg2002} and the references therein.
\end{remark}

\begin{remark}
With slight adaptations of the respective proofs, 
the convergence results in Theorem \ref{thm-meanconvergence} and Corollary \ref{cor-dinilipschitz} 
as well as the estimate of the Lebesgue constant in Theorem \ref{thm:lebesgueconstant} can be shown also
for the interpolation schemes of the non-degenerate Lissajous curves considered in \cite{ErbKaethnerAhlborgBuzug2014}.
In particular, these results confirm the numerical tests given in \cite{ErbKaethnerAhlborgBuzug2014}. 
\end{remark}

\section{Numerical experiments}

Finally, we illustrate numerically the effect of different values of the parameter $p_n$, $n \in \Nn$, on the Lebesgue constant and the convergence behavior of 
the polynomial interpolation schemes. In particular, we will see that larger values of $p_n$ can have advantages when approximating functions on anisotropic domains or 
functions with anisotropic smoothness. 

We investigate first the Lebesgue constant $\Lambda_{n,p_n}$ for the three different parameters $p_n \in \left\{1, n+1, \lfloor \sqrt{n} \rfloor n+1 \right\}$. 
The first choice $p_n = 1$ leads to respective results of the Padua points and can be compared to the numerical experiments given in 
\cite{CaliariDeMarchiVianello2005,ErbKaethnerAhlborgBuzug2014}. 
In Figure~\ref{fig:lebesgue}(a) the values $\Lambda_{n,p_n}$ are illustrated for $1 \leq n \leq 50$. For a better comparison of the values,
also the functions $f_1(n) = \ln(n)^2/2 + 2$ and $f_2(n) = \ln(n \sqrt{n})^2/2 + 4$ are plotted in Figure~\ref{fig:lebesgue}(a) as a lower and an upper benchmark, respectively. In Figure~\ref{fig:lebesgue}(b)
the Lebesgue constants are plotted with respect to the number $|\mathrm{LD}_{n,p_n}|$ of interpolation points. 
The logarithmic growth of $\Lambda_{n,p_n}$ as estimated in Theorem \ref{thm:lebesgueconstant} is clearly visible in Figure~\ref{fig:lebesgue}. Further, the numerical
experiments indicate a slight growth of $\Lambda_{n,p_n}$ with respect to an increasing parameter $p_n$. The best results are obtained for $p_n = 1$, i.e. for the Padua points.

\begin{figure}[htb]
	\centering
	\subfigure[ $\Lambda_{n,p_n}$ for $n = 1, \ldots 50$. ]{\includegraphics[scale=0.42]{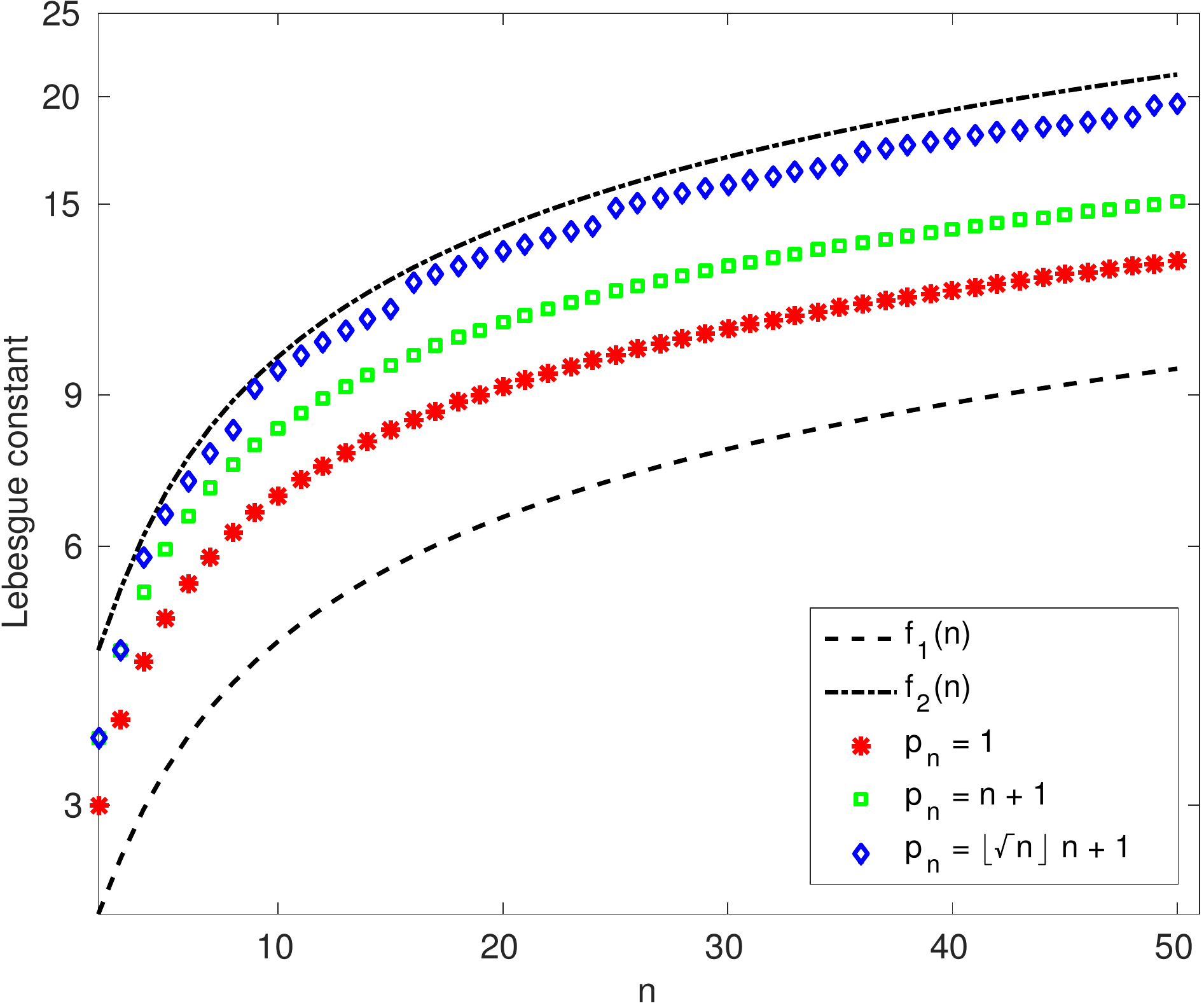}}
	\hfill	
	\subfigure[ $\Lambda_{n,p_n}$ with respect to the number $|\mathrm{LD}_{n,p_n}|$.]{\includegraphics[scale=0.42]{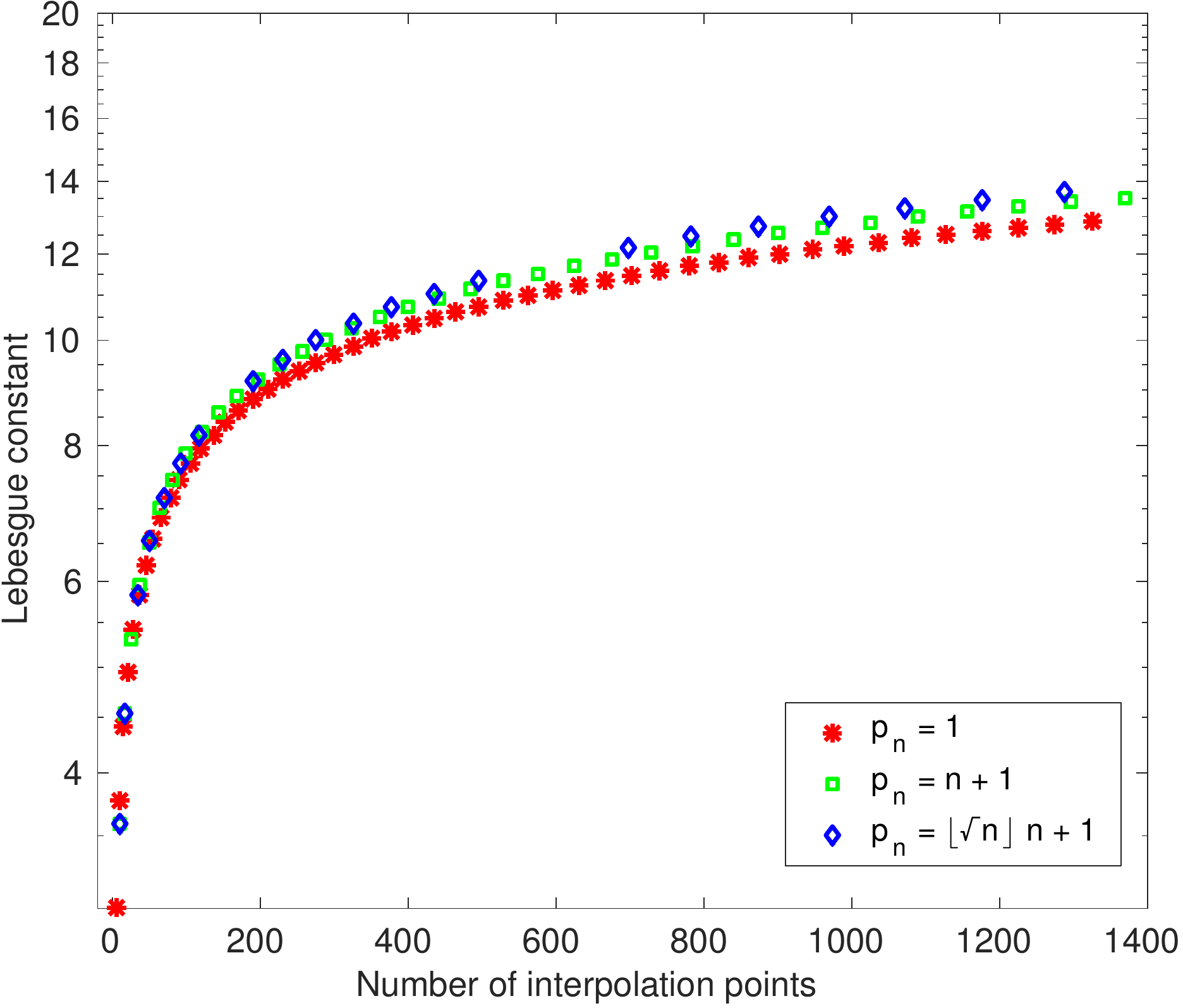}} 
  	\caption{The Lebesgue constant $\Lambda_{n,p_n}$ for the parameters $p_n \in \left\{1, n+1, \lfloor \sqrt{n} \rfloor n+1 \right\}$.}
	\label{fig:lebesgue}
\end{figure}

To evaluate the convergence of the interpolation polynomials $\mathcal{L}_{n,p_n} f$ to a continuous function $f$, we use the three test functions 
\begin{align*}
f_1(x,y) &= e^{-\frac{(5-10x)^2}{2}}+0.75 e^{-\frac{(5-10y)^2}{2}}+ 0.75 e^{-\frac{(5-10x)^2+(5-10y)^2}{2}}, \\
f_{2}(x,y) &= \left(\1_{[0.25,0.75]}(x) (x-0.25) + \1_{[0.75, \infty)}(x) \right) e^{-\frac{(y-0.5)^2}{2}} , \\
f_3(x,y) &= f_2(y,x),
\end{align*}
where $\1_J(x)$ denotes the indicator function of an interval $J \subset \Rr$, i.e. $\1_J(x) = 1$ if $x \in J$ and $\1_J(x) = 0$ otherwise. 
The function $f_1$ is taken from the test set in \cite{RenkaBrown1999} and is smooth, whereas $f_{2}$ and $f_3$ have two discontinuities 
in the partial derivatives $\frac{\partial f_{2}}{\partial x}$ and $\frac{\partial f_{3}}{\partial y}$, respectively. 

As described in \cite{CaliarideMarchiVianello2008-2}, we use affine mappings of the square $[-1,1]^2$
to calculate the interpolation polynomials $\mathcal{L}_{n,p_n} f$ on the two rectangular domains $\Omega_1 = [0,1]^2$ and $\Omega_2 = [0,2] \times [0,1]$. As 
parameters $p_n$ we consider again the cases $p_n \in \left\{1, n+1, \lfloor \sqrt{n} \rfloor n+1 \right\}$. The maximal error between $f$ and $\mathcal{L}_{n,p_n}f$ 
is computed on a uniform grid of $100\times100$ and $200\times100$ points defined in $\Omega_1$ and $\Omega_2$, respectively. 

\begin{figure}[htb]
	\centering
	\subfigure[ $ \| \mathcal{L}_{n,p_n}f_{1} - f_{1} \|_{\infty}$ on $\Omega_1$. ]{\includegraphics[scale=0.42]{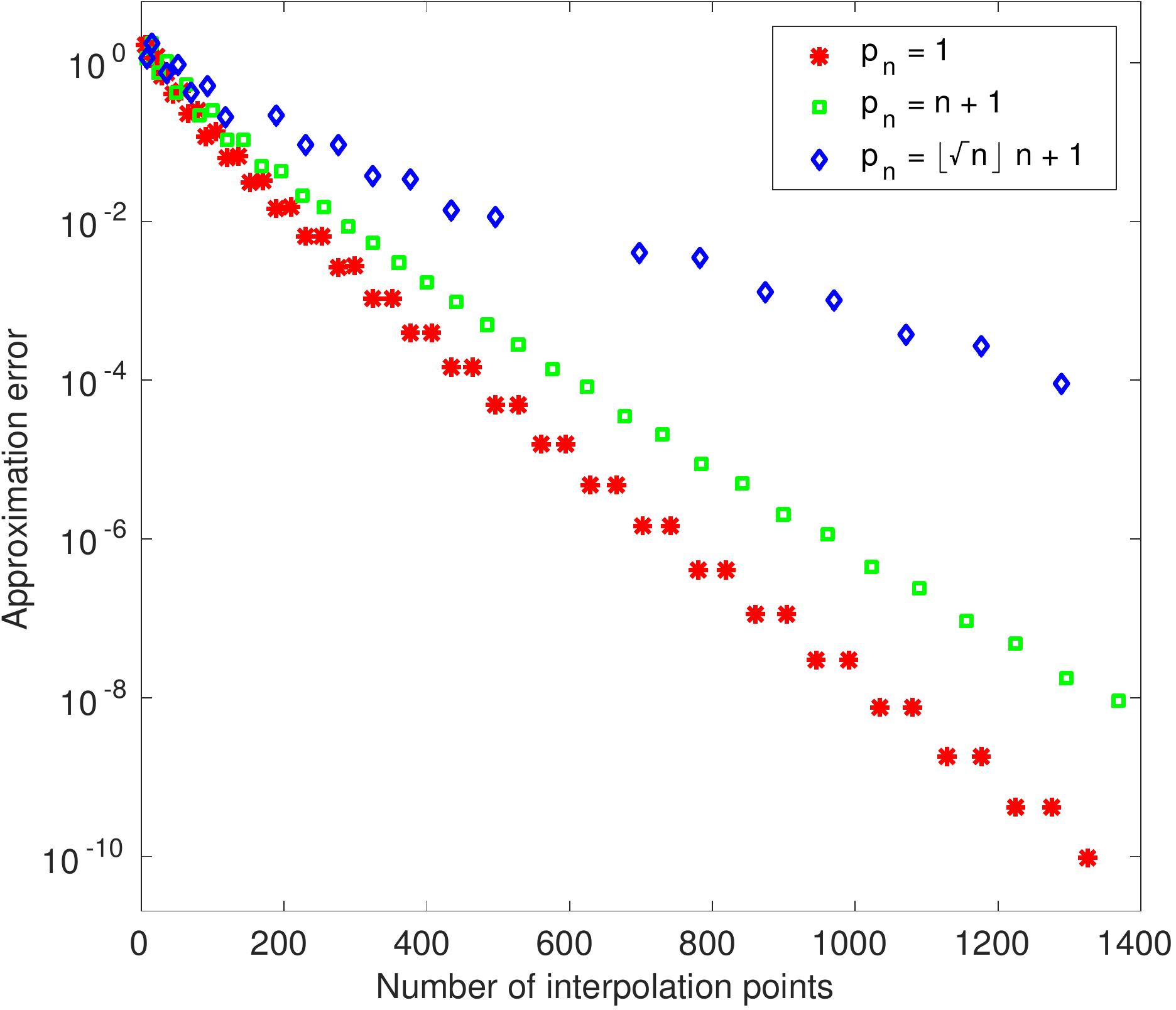}}
	\hfill	
	\subfigure[ $ \| \mathcal{L}_{n,p_n}f_{1} - f_{1} \|_{\infty}$ on $\Omega_2$.]{\includegraphics[scale=0.42]{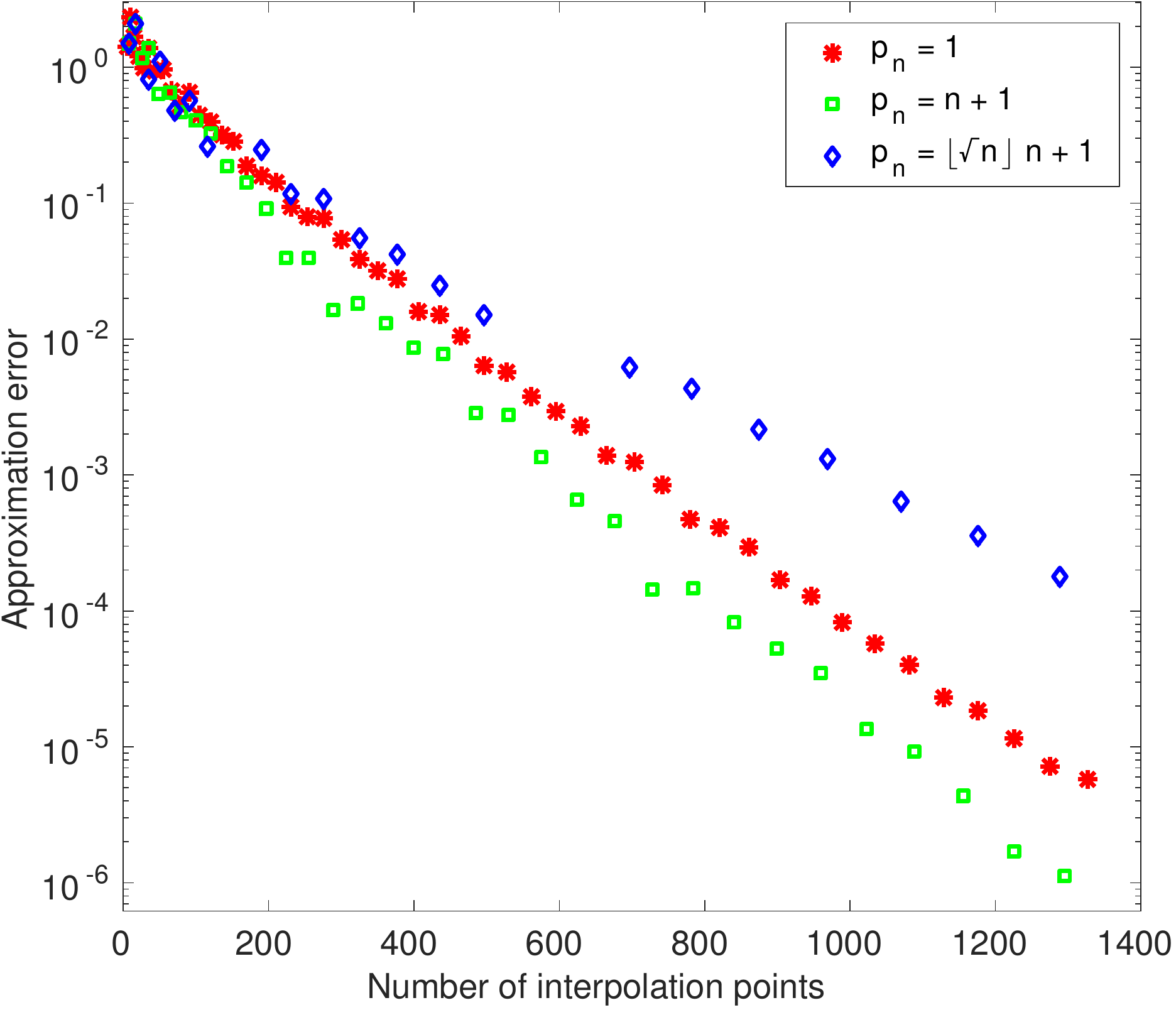}} 
	\caption{The approximation error $\| \mathcal{L}_{n,p_n}f_{1} - f_{1} \|_{\infty}$ on the domains $\Omega_1 = [0,1]^2$ and $\Omega_2 = [0,2] \times [0,1]$.}
	\label{fig:appr1}
\end{figure}

The approximation errors for the function $f_1$ on the domains $\Omega_1$ and $\Omega_2$ are displayed in Figure~\ref{fig:appr1}(a) and \ref{fig:appr1}(b), respectively. 
While all considered interpolation polynomials converge to $f_1$ as $n \to \infty$, the decay of the approximation error on the two domains depends on the choice of 
the parameter $p_n$. On the square $\Omega_1$ the best results are obtained for the parameter $p_n = 1$, whereas the parameter $p_n = n + 1$ seems to produce better adapted
interpolation polynomials for the anisotropic domain $\Omega_2$. 

Finally, we investigate the influence of $p_n$ in the case that the given function 
is smooth in only one of the variables. As test functions, we consider $f_2$ and $f_3$ on $\Omega_1$. Since both are Lipschitz on $\Omega_1$, the criterion in
Remark 7 is satisfied for all three choices of $p_n$. The convergence of the respective interpolation schemes is visible in Figure~\ref{fig:appr2}.
We observe that for $f_2$ the choice $p_n = \lfloor \sqrt{n} \rfloor n+1$ produces the best result, 
whereas for the function $f_3$ the choice $p_n = 1$ 
leads to a faster decay with respect to the number of interpolation points. 

This numerical result reflects the
theoretical estimate given in \eqref{eq:Dini1} which contains an interplay between two directional moduli of continuity. 
Since $f_2$ is smooth with respect to $y$ and nonsmooth with respect to $x$, 
the modulus $\omega(f_2;0,\frac{1}{n})$ is typically smaller than the modulus $\omega(f_2;\frac{1}{n+p_n},0)$ if $p_n$ is small. 
However, using large values for the parameter $p_n$, this disparity between the two directional moduli can be compensated and 
a better approximation quality can be achieved. For the function
$f_3$ the roles of $x$ and $y$ are interchanged. In this case, larger values of $p_n$ have no essential effect on the approximation quality.

\begin{figure}[htb]
	\centering
	\subfigure[ $ \| \mathcal{L}_{n,p_n}f_{2} - f_{2} \|_{\infty}$ on $\Omega_1$. ]{\includegraphics[scale=0.42]{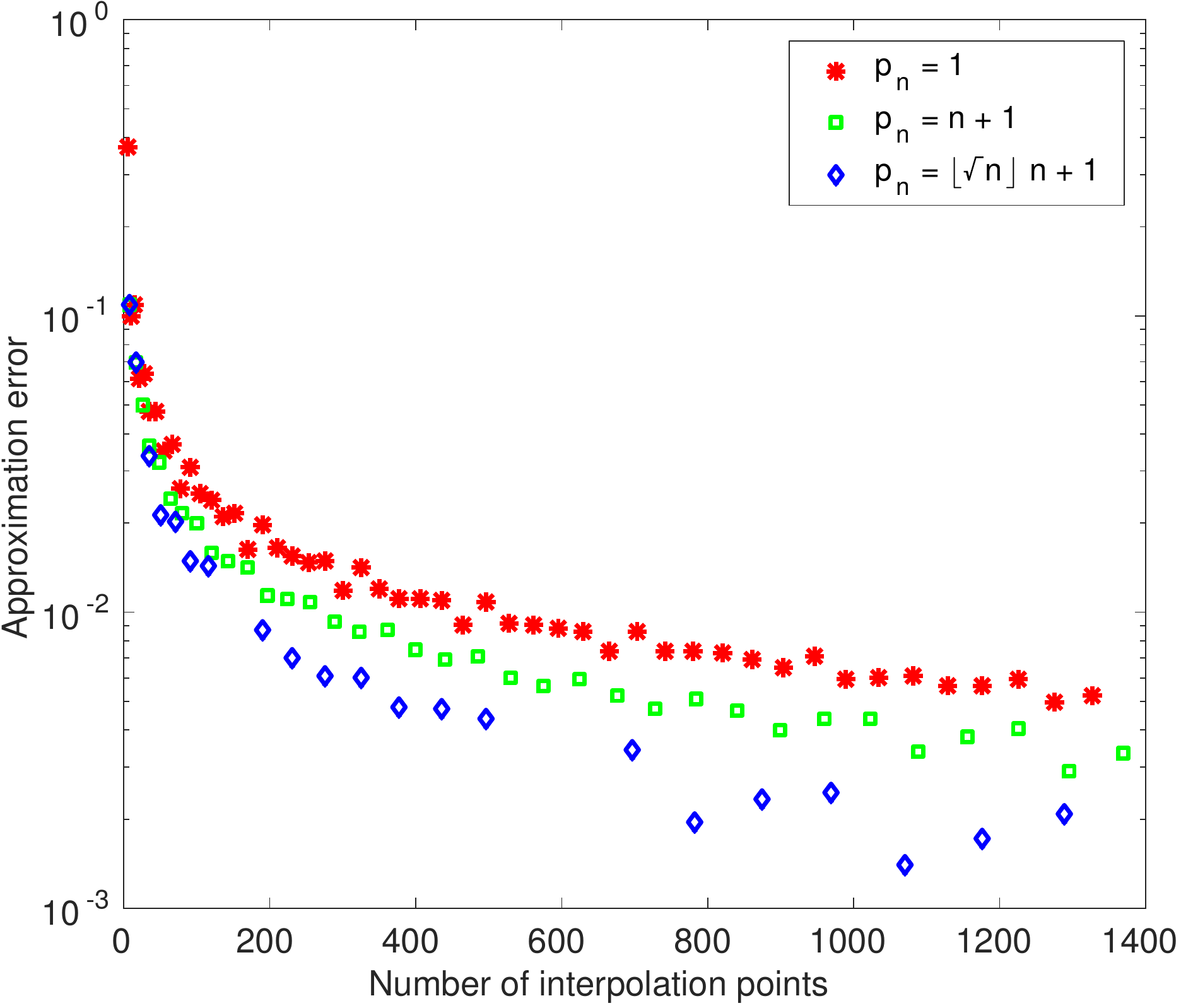}}
	\hfill	
	\subfigure[ $ \| \mathcal{L}_{n,p_n}f_{3} - f_{3} \|_{\infty}$ on $\Omega_1$.]{\includegraphics[scale=0.42]{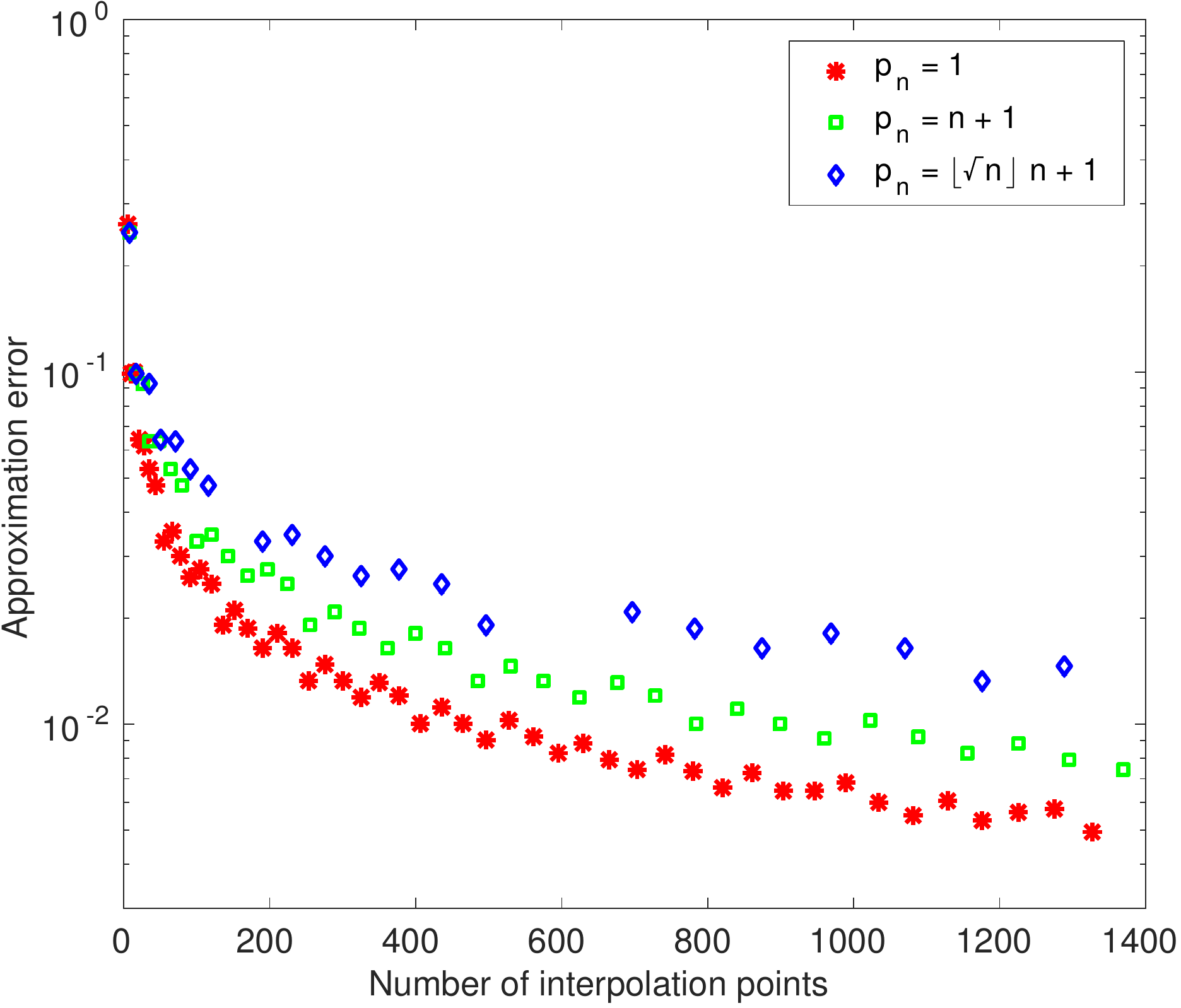}} 
	\caption{The approximation errors $\| \mathcal{L}_{n,p_n}f_{2} - f_{2} \|_{\infty}$ and 
	$\| \mathcal{L}_{n,p_n}f_{3} - f_{3} \|_{\infty}$ on the domain $\Omega_1 = [0,1]^2$.}
	\label{fig:appr2}
\end{figure}

\section*{Acknowledgements}
The author gratefully acknowledges the financial support of the German Research Foundation (DFG, grant number ER 777/1-1).
Further, he thanks an anonymous referee for a lot of valuable comments that improved the quality of the article.

\end{document}